\crefname{thm}{Theorem}{Theorems}
\newaliascnt{eqfloat}{equation}
\newcommand*{\ORGeqfloat}{}
\let\ORGeqfloat\eqfloat
\def\eqfloat{%
  \let\ORIGINALcaption\caption
  \def\caption{%
    \addtocounter{equation}{-1}%
    \ORIGINALcaption
  }%
  \ORGeqfloat
}
\theoremstyle{definition}
\newtheorem{thm}{Theorem}[section]
\newtheorem{prop}[thm]{Proposition}
\newtheorem{lm}[thm]{Lemma}
\newtheorem{cor}[thm]{Corollary}
\newtheorem{obs}[thm]{Observation}
\newtheorem{defin}[thm]{Definition}
\newtheorem{smpl}[thm]{Example}
\crefname{lm}{Lemma}{Lemmas}
\crefname{thm}{Theorem}{Theorems}
\crefname{prop}{Proposition}{Propositions}
\crefname{defin}{Definition}{Definitions}
\crefname{rem}{Remark}{Remarks}
\newcommand{\oPi}{\mathbf{C}}
\newcommand{\opi}{\vec{\boldsymbol{\pi}}}
\newcommand{\III}{\vec{\mathbf{I}}}
\newcommand{\JJJ}{\vec{\mathbf{J}}}
\DeclareMathOperator{\id}{id}
\DeclareMathOperator{\pat}{\mathbf{pat}}
\DeclareMathOperator{\inc}{\mathrm{inc}}
\DeclareMathOperator{\sFunc}{\mathrm{SurFunc}}
\DeclareMathOperator{\End}{\mathrm{End}}
\newcommand{\Fset}{\mathsf{Set^{\times}}}
\newcommand{\Vect}{\mathsf{Vect}}
\newcommand{\gVec}{\mathsf{gVec}}
\newcommand{\Set}{\mathsf{Set}}
\newcommand{\Ss}{\mathsf{Sp}} 
\newcommand{\Ssk}{\mathsf{Sp}_\Kb} 
\newcommand{\Spr}{\mathsf{Spr}} 
\newcommand{\Kb}{\mathbb{K}}
\newcommand{\rP}{\mathrm{P}}
\newcommand{\rQ}{\mathrm{Q}}
\newcommand{\rH}{\mathrm{H}}
\newcommand{\prP}{\mathtt{P}}
\newcommand{\prQ}{\mathtt{Q}}
\newcommand{\prR}{\mathtt{R}}
\newcommand{\thh}{\mathbf{h}}
\newcommand{\tp}{\mathbf{p}} 
\newcommand{\tq}{\mathbf{q}}
\newcommand{\trr}{\mathbf{r}} 
\newcommand{\Kc}{\mathcal{K}}
\newcommand{\Kcb}{\overline{\Kc}}
\newcommand{\sq}{{{\scriptstyle{\square}}}}
\begin{document}

\title{Antipode formulas for pattern Hopf algebras} 


\author{Raul Penaguiao, Yannic Vargas}
\email{raulpenaguiao@sfsu.edu}
\email{yvargaslozada@tugraz.at}
\address{San Francisco State University}
\address{Technische Universit\"at Graz}
\keywords{permutations, presheaves, species with restrictions, species, Hopf algebras, free algebras, antipode, cancellation-free, chromatic, reciprocity}
\subjclass[2010]{05E05, 16T05, 18D10}
\date{\today} 

\begin{abstract}
The permutation pattern Hopf algebra is a commutative filtered and connected Hopf algebra.
Its product structure stems from counting patterns of a permutation, interpreting the coefficients as permutation quasi-shuffles.
The Hopf algebra was shown to be a free commutative algebra and to fit into a general framework of pattern Hopf algebras, via species with restrictions.

In this paper we introduce the cancellation-free and grouping-free formula for the antipode of the permutation pattern Hopf algebra.
To obtain this formula, we use the popular sign-reversing involution method, by Benedetti and Sagan.
This formula has applications on polynomial invariants on permutations, in particular for obtaining reciprocity theorems.
On our way, we also introduce the packed word patterns Hopf algebra and present a formula for its antipode.

Other pattern algebras are discussed here, notably on parking functions, which recovers notions recently studied by Adeniran and Pudwell, and by Qiu and Remmel.
\end{abstract}

\maketitle

\tableofcontents

\section{Introduction}

In his now celebrated  ``Lemma 14'', Takeuchi (see \cite{Takeuchi1971}, Lemma 14) obtained a quite general formula for the antipode of a Hopf algebra. 
This is an antipode formula for any filtered Hopf algebra, which can be applied in much generality and fits the framework of \textbf{pattern Hopf algebras}, which we introduce below.
However, it has been observed that it is not the most economical formula, as it leaves some cancellations to be made.

\

Economical formulas for antipodes in Hopf algebras in combinatorics have played an important role in extracting old and new combinatorial equations, see \cite{Schmitt1993, humpert2012incidence, BS2017, aguiar2017hopf, xu2022cancellation}.
In particular, Humpert and Martin were able to explain, in \cite{humpert2012incidence}, an elusive \textit{reciprocity relation} on graphs first presented by Stanley in \cite{stanley1975combinatorial}.
We will introduce the reciprocity relation now.

\

On a graph $G$, we define the chromatic function by counting, for each $n\geq 0$, the number of \textit{stable colourings} of its vertices, that is, colourings such that each edge is monochromatic.
This function, $\chi_G$, turns out to be a polynomial, and its degree is the number of vertices of $G$.
It makes sense to explore the evaluation $\chi_G(-1)$, and Stanley proved that this counts the \textit{acyclic orientations} of $G$.
The observation of Humpert and Martin is that $\chi$ is in fact a Hopf algebra morphism from the \textbf{incidence Hopf algebra on graphs} to the polynomial Hopf algebra, so it commutes with the antipodes of each Hopf algebra.
The antipode of the polynomial Hopf algebra is $S(x) = -x$, so the chromatic polynomial on negative numbers becomes easier to compute, via
$$\chi_G(-x) = \chi_{S(G)}(x)\, .$$

A simple formula for the antipode $S(G)$ on the incidence Hopf algebra on graphs was also presented by Humpert and Martin, where the number of acyclic orientations plays a role and explains the previous result from Stanley.
This antipode formula was also used to obtain new relations involving the Tutte polynomial.

\

A method for obtaining a cancellation-free formula that seem to work with a large family of Hopf algebras was brought forth by Sagan and Benedetti in \cite{BS2017}, called \textit{sign-reversing involution method}.
This is a classical method in enumerative combinatorics, that has found applications in fields as far as number theory e.g. in \cite{zagier2009one}.
There, it was shown that $x^2+y^2 = p$ has integer solutions for $p$ prime whenever $p\equiv_4 1$.
This is a classical fact, but this new proof uses an involution method.

\

In Hopf algebras, this method requires careful treatment of the antipode formula of Takeuchi.
However, it varies widely depending on the combinatorial Hopf algebra at hand.
Notwithstanding, this has been shown to work in the shuffle Hopf algebra, the incidence Hopf algebra on graphs and the Hopf algebras of quasisymmetric functions and multi-quasisymmetricfunctions (see \cite{BS2017}). It is still a challenging problem to find sign-reversing involutions to find cancellative-free formulas for the antipode of others classical Hopf algebras.  \footnote{In \cite{MalvenutoReutenauer}, the antipode of the Malvenuto–Reutenauer Hopf algebra was partially computed}

\

In \cite{aguiar2017hopf}, a cancellation-free antipode formula for a Hopf structure on generalized permutahedra was found. 
This has striking consequences, because several interesting combinatorial structures can be embedded in the Hopf algebra of the generalized permutahedra.
Specific examples are graphs, matroids, posets and set partitions.
Thus, this allows us to readily determine a cancellation-free formula for all these Hopf algebras.

\

Parallel to this development is the study of permutation patterns.
This is a study with roots in computer science, pioneered by Knuth in \cite{Knuth}, where a description for the \textit{stack sortable} permutations was presented via permutation patterns.
In the meantime, permutation patterns has become a well established area of expertise in combinatorics, see \cite{linton2010permutation}.

\

The second author, in \cite{Vargas}, introduced yet another tool to study permutation patterns, by building the permutation pattern Hopf algebra.
Specifically, if we consider finite sums of functions of the form 
$$ \binom{\sigma}{\pi} \coloneqq \pat_{\pi}(\tau)\coloneqq  \#\{\text{ ways to fit $\pi$ in $\tau$ }\}\, ,$$
the central functions in the study of permutation patterns, we span a vector space that is closed for pointwise product, see \eqref{eq:prodperm}.
The algebra corresponding to this pointwise product is the \textbf{permutation pattern Hopf algebra} $\mathcal{A}(\mathtt{Per})$, and is shown to be free in \cite{Vargas}.

\

The construction was generalized to other combinatorial objects, in \cite{Penaguiao2020}, as long as there is a notion of restrictions, for instance graphs or marked permutations.
The resulting structures are called \textbf{pattern Hopf algebras}.
It is conjectured that all pattern Hopf algebras are free.

\

In this paper we provide a cancellation-free and grouping-free antipode formula for the pattern Hopf algebras on permutations and on packed words.
This is an application of the sign-reversing involution method.

\

We also present an original species with restrictions structure on parking functions.
This is part of a project to interpret common combinatorial objects as species with restrictions.
Crucially, objects that do not have an inherent labelling are not amenable to an interpretation as species, so this codifies an important step in this project.
The extra step here is done with a help of a bijection between labelled Dyck paths and parking functions.

\

The notion of patterns in parking functions here presented recovers the one presented recently in \cite{adeniran2022pattern}.
There, the authors study the number of parking functions that avoid a set of five parking functions of size three, recovering sequences like the Catalan numbers.
The original introduction of patterns in parking functions hearkens back to \cite{qiu2018patterns}.
Parking functions themselves are a recent endeavour in combinatorics, being introduced for the first time in \cite{konheim1966occupancy}, where it was shown that there are $(n+1)^{n-1}$ parking functions of length $n$.

\

In \cref{sec:antipode_computing}, we present an example of an application of the sign-reversing involution formula.
We also present some examples of some antipode computations via \textit{ad hoc} methods. An introduction to the basic notions of Joyal species and algebraic structures related to these are presented in \cref{sec:species}. In \cref{sec:pattern_algebra_contruction}, we present the algebra and category theory background to pattern Hopf algebras. We recall the pattern Hopf algebra construction from \cite{Penaguiao2020}, along with its product and coproduct structure, from a species with restrictions, so that this article is self contained.
In \cref{sec:species_restrictions}, we present several examples of species with restrictions, centrally the one on permutations, but also an original one in parking functions.
In \cref{sec:formula_general,sec:formula_pp}, we present the main result, the cancellation-free and grouping-free formula for the antipode in packed words and in $\mathcal{A}(\mathtt{Per})$.
\subsection{The permutation pattern Hopf algebra and the main result}

In this section we introduce the Hopf algebra structure on permutation patterns, and present the main result on this paper.

\

Let $\pat_{\pi}$ be a function on permutations, so that $\pat_{\pi}(\sigma)$ counts the number of restrictions of the permutation $\sigma$ that fit the pattern $\pi$.
In this way, the collection of permutation pattern functions $\{\pat_{\pi}\}$ is linearly independent, so it is a basis of a vector space $\mathcal A (\mathtt{Per})$.
However, in \cite{Vargas}, it was shown that the pointwise product of two such functions can be expressed as a sum of other permutation pattern functions:
\begin{equation}\label{eq:prodperm}
\pat_{\pi_1} \pat_{\pi_2} = \sum_{\sigma} \binom{\sigma}{\pi_1, \pi_2} \pat_{\sigma} \, .
\end{equation}

The coefficients $\binom{\sigma}{\pi_1, \pi_2}$ that arise in this product formula count the so called \textbf{quasi-shuffle signatures}, or \textbf{QSS}, of $\sigma$ from $\pi_1, \pi_2$.
Let us precise this definition:

\begin{defin}[QSS on permutations]
A QSS of $\sigma$ from $\pi_1, \dots, \pi_n$ is a tuple $\III = (I_1, \dots, I_n)$ of sets on the ground set of the permutation $\sigma$, that cover this ground set and that the restricted permutation $\sigma|_{I_i}$ is exactly a pattern of $\pi_i$.

\

Note how any reordering of a QSS is also a QSS.
It was shown in \cite{Penaguiao2020} that $\binom{\sigma}{\pi_1, \dots, \pi_n}$, the coefficient that arises in the iterated product of $n$ elements, counts the number of ways of covering $\sigma$ with $n$ permutations, each fitting the patterns $\pi_1, \dots, \pi_n$.
\end{defin}

\

This algebra $\mathcal A(\mathtt{Per})$ can be endowed with a Hopf algebra structure with the help of the diagonal sum of permutations, $\oplus$, also called shifted concatenation of permutations.
If one lets $\pi = \pi_1 \oplus \dots \oplus \pi_n$ be the decomposition of $\pi$ into $\oplus$-indecomposable permutations under the $\oplus$ product, we define the shifted deconcatenation coproduct
$$\Delta \pat_{\pi} = \sum_{k=0}^n \pat_{\pi_1\oplus \dots \oplus \pi_k} \otimes \pat_{\pi_{k+1}\oplus \dots \oplus \pi_n} \in \mathcal A (\mathtt{Per}) \otimes \mathcal A (\mathtt{Per})\, .$$

To present the antipode formula we need to refine the notion of QSS.
\begin{defin}[Interlacing QSS on permutations]
A QSS $\III = (I_1, \dots, I_n)$ is said to be \textbf{non-interlacing} if there is some $i=1, \dots, n-1$ such that $I_i < I_{i+1}$ and $\sigma(I_i) < \sigma(I_{i+1})$.
Otherwise, we say that the QSS is \textbf{interlacing}. 

\

Note that, unlike the case on QSS, reordering an interlacing QSS does not in general give an interlacing QSS.
\end{defin}

\begin{thm}[Antipode formula for Permutation pattern Hopf algebra]\label{thm:antipode_perms_intro}
Let $\pi = \pi_1\oplus \dots \oplus \pi_n$ be a decomposition of a permutation $\pi$ into $\oplus$-indecomposable permutations.
Then, we have the following formula for the antipode of $\pat_{\pi}$:

$$S(\pat_{\pi}) = (-1)^n \sum_{\sigma} \bigl[\!\begin{smallmatrix} \sigma \\ \pi_1, \dots, \pi_n \end{smallmatrix}\!\bigr] \pat_{\sigma}\, ,$$
where the sum runs over all permutations $\sigma$, and the coefficients count the number of \textbf{interlacing QSS} of $\sigma$ from $\pi_1, \dots, \pi_n$.
\end{thm}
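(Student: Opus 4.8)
The plan is to run the sign-reversing involution method of Benedetti and Sagan on Takeuchi's formula. Since $\mathcal{A}(\mathtt{Per})$ is connected and filtered, Takeuchi's ``Lemma 14'' expresses the antipode on the augmentation ideal as
\[
S(\pat_{\pi}) \;=\; \sum_{k\ge 1}(-1)^k\, m^{(k-1)}\circ\bar\Delta^{(k-1)}(\pat_{\pi}),
\]
where $\bar\Delta$ is the reduced coproduct and $m^{(k-1)}$ is iterated multiplication. First I would compute $\bar\Delta^{(k-1)}(\pat_{\pi})$. Because the coproduct is shifted deconcatenation along $\pi=\pi_1\oplus\dots\oplus\pi_n$, the reduced coproduct only cuts between the $\oplus$-indecomposable factors, and iterating it produces exactly the ways of grouping $\pi_1,\dots,\pi_n$ into $k$ nonempty consecutive runs, i.e. the compositions of $n$ into $k$ parts. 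Writing $\tau_i=\pi_{a_i}\oplus\dots\oplus\pi_{b_i}$ for the $i$-th run, each tensor factor is a single basis element $\pat_{\tau_i}$, so applying $m^{(k-1)}$ together with the product formula \eqref{eq:prodperm} yields a double sum over compositions and over QSS $\binom{\sigma}{\tau_1,\dots,\tau_k}$.

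The crucial step is a refinement bijection. I would match each QSS $(J_1,\dots,J_k)$ of $\sigma$ from the grouped permutations $\tau_1,\dots,\tau_k$ with the pair consisting of a \emph{fine} QSS $(I_1,\dots,I_n)$ of $\sigma$ from $\pi_1,\dots,\pi_n$ together with its underlying grouping. In one direction, since $\sigma|_{J_i}\cong\tau_i$ is a $\oplus$-sum and the factorisation of a permutation into $\oplus$-indecomposables is unique, $\sigma|_{J_i}$ has a canonical diagonal decomposition into blocks $I_{a_i},\dots,I_{b_i}$ with $\sigma|_{I_m}\cong\pi_m$. In the other direction, given $(I_1,\dots,I_n)$ and a grouping in which every pair of \emph{consecutive} blocks inside a common run is non-interlacing, transitivity of the separation order upgrades this to a full diagonal arrangement, so that $\sigma|_{J_i}\cong\tau_i$ with $J_i=\bigcup_{m\in\text{run }i}I_m$. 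Thus $S(\pat_{\pi})=\sum_\sigma F(\sigma)\,\pat_{\sigma}$ with
\[
F(\sigma)=\sum_{(I_1,\dots,I_n)}\ \sum_{G \text{ compatible}}(-1)^{|G|},
\]
where $|G|$ is the number of runs and a grouping $G$ is \emph{compatible} when no run merges an interlacing consecutive pair.

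Then I would evaluate the inner sum by a sign-reversing involution. For a fixed fine QSS, call the gap between $I_m$ and $I_{m+1}$ \emph{free} if that pair is non-interlacing; a compatible grouping is then any placement of dividers that puts a divider at every non-free gap and makes an arbitrary choice at the free ones. The involution toggles the divider at the \emph{first} free gap: it is fixed-point-free precisely on configurations possessing a free gap, and it flips the parity of $|G|$. Hence all such terms cancel, and the only surviving fine QSS are those with no free gap at all, that is, the \emph{interlacing} QSS, each contributing its unique full grouping with $|G|=n$ and sign $(-1)^n$. Equivalently, the cancellation is the elementary identity $\sum_{S\subseteq N}(-1)^{|S|}=(1-1)^{|N|}$, summing over subsets $N$ of free gaps. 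This gives $F(\sigma)=(-1)^n\cdot\#\{\text{interlacing QSS of }\sigma\text{ from }\pi_1,\dots,\pi_n\}$, as claimed.

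I expect the main obstacle to be the refinement bijection rather than the involution itself: one must check carefully that ``consecutive non-interlacing within a run'' is genuinely equivalent to the direct-sum structure required for $\sigma|_{J_i}\cong\tau_i$ — this is where uniqueness of the $\oplus$-factorisation and transitivity of the separation order are essential — and that overlaps between blocks belonging to different runs are handled correctly by the quasi-shuffle nature of the product. Once this equivalence is in place, the involution above performs exactly the bookkeeping that collapses the alternating sum to the interlacing contributions.
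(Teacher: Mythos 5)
Your argument is correct, and its core is the same machinery the paper develops: expanding Takeuchi's formula so that the iterated reduced coproduct is indexed by compositions of $n$ (via unique $\oplus$-factorisation), the refinement bijection between coarse QSS from the grouped factors and fine QSS equipped with a compatible grouping, and the Boolean cancellation $\sum_{S\subseteq N}(-1)^{|S|}=0$ over the free gaps appear in the paper as \cref{thm:general_antipode}, \cref{prop:filter_structure_I} and \cref{lm:minpacked}, with your ``set of compatible groupings of a fixed fine QSS'' being the filter $\mathcal I^{x,y}_{\III}$ and ``no free gap'' being the statement that its minimum is $\mathbb{1}=(1,\dots,1)$. The genuine divergence is only at the last step: the paper carries out this interval analysis for packed words (\cref{thm:antipode_packed}) and then deduces the permutation formula by pushing it through the surjective Hopf morphism $\mathcal A[\mathrm{inc}]:\mathcal A(\mathtt{PW})\to\mathcal A(\mathtt{Per})$ using \cref{thm:functoriality}, whereas you run the involution directly on permutations. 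Both routes work; the paper's reuses a theorem it proves anyway and gets the permutation case for free, while yours is self-contained and slightly cleaner since for permutations both orders are total. The one point you should make fully explicit (you correctly flag it as the delicate spot) is the case of isomorphic consecutive factors $\pi_i\cong\pi_{i+1}$ merged into one run: the refinement map is a bijection only because ``non-interlacing'' is read in the tuple order, which forces the canonical diagonal ordering of the two blocks and rules out the swapped fine QSS; the paper encodes exactly this as the extra clause $I_i\lneq_y I_j$ for merged isomorphic factors in the definition of $\alpha$-stability.
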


As in the case of graphs, this antipode formula has consequences in polynomial invariants. Also, the notion of \emph{bialgebra in cointeraction} (c.f. \cite{Foissy}) can be used to obtain antipode formulas for the pattern Hopf algebra.
These are discussed in \cite{penaguiao2023polynomial}.

\

In the following we present some examples that help discern QSS and interlacing QSS.

\

\begin{figure}[h]
    \centering
    \includegraphics{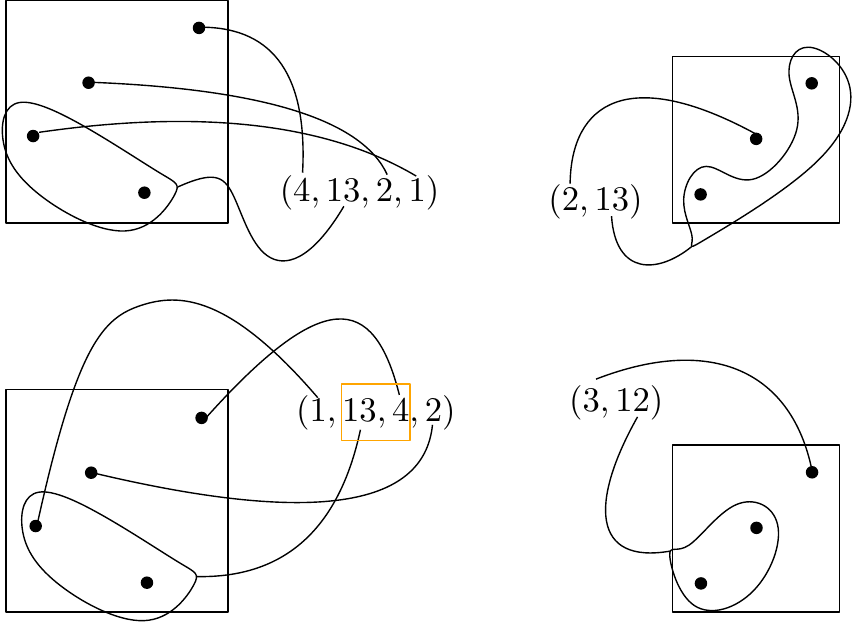}
    \caption{\textbf{Left:} the permutation 2314, along with a labelling of two of its QSS from $1, 21, 1, 1$. In orange the two sets that do not interlace. \textbf{Right:} the permutation 123, along with its two interlacing QSS from $1, 12$.\label{fig:interlacingQSSsmpl}}
\end{figure}

\begin{smpl}[Interlacing QSS]
The permutation $2314$ has several QSS from $1, 21, 1, 1$, for instance $(4, 13, 2, 1)$ and $(1, 13, 4, 2)$, but from these two, only the first is interlacing.
In \cref{fig:interlacingQSSsmpl}, one can see these two QSS.
Further computations can show that $\binom{2314}{1, 21, 1, 1} = 36$ and $\bigl[\!\begin{smallmatrix} 2314 \\ 1, 21, 1, 1 \end{smallmatrix}\!\bigr] = 8$.

\

One can observe that there are three QSS of $123$ from $1$, $12$, but one of them is non-interlacing (the QSS $(1,23)$), so
$\bigl[\!\begin{smallmatrix} 123 \\ 1, 21 \end{smallmatrix}\!\bigr] = 2$.
In \cref{fig:interlacingQSSsmpl}, one can see these two interlacing QSS.
\end{smpl}

\section{Computing the antipode\label{sec:antipode_computing}}

In this section we will now explore the \textit{ad hoc} methods to obtain the antipode, using basic axioms and the Takeuchi formula, on the polynomial algebra and on the permutation pattern Hopf algebra.
These methods fail to give a cancellation-free and grouping-free formula for the antipode.
Afterwards we employ the sign-reversing involution method to compute the antipode on the polynomial Hopf algebra.
This serves as a display of how the method works.

\subsection{\textit{Ad hoc} methods}
We start by recalling Takeuchi's formula, in the form that is presented in \cite{GrinbergReiner}, as well as some convenient notation.
Let us define the $\star$ notation on maps $a, b: C \to A$, whenever $A$ is an algebra, and $C$ is a coalgebra, we define:
$$a \star b \coloneqq \mu_A \circ (a \otimes b) \circ \Delta_C\, ,$$
which defines an associative and unitary product on linear maps from $C$ to $A$. We will be focused on the case when $C=A=H$ is a Hopf algebra, so this defines a convolution operation on $\End(H)$.

\begin{prop}[Takeuchi's formula, Lemma 14]\label{lm:takeuchi}
If $H = (H, \mu, \iota, \Delta, \epsilon, S)$ is a Hopf algebra such that $(\iota\circ \epsilon - \id_H)$ is $\star$-nilpotent, then 
\begin{equation}\label{eq:eq1}
S = \sum_{k\geq 0 }  ( \iota  \circ\epsilon- \id_H)^{\star k} = \sum_{k\geq 0} (-1)^k \mu^{\circ (k-1)} \circ (\id_{H} - \iota \circ \epsilon)^{\otimes k} \circ \Delta^{\circ (k-1)}\, .
\end{equation}

We use the convention that $\Delta^{\circ (-1)} = \epsilon $ and $\mu^{\circ (-1)} = \iota$.
\end{prop}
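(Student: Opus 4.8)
The plan is to identify $S$ as the $\star$-inverse of $\id_H$ inside the convolution monoid $(\End(H), \star)$ and to exhibit this inverse as a geometric series that the nilpotency hypothesis truncates to a finite sum. Write $e \coloneqq \iota\circ\epsilon$ for the composite that serves as the $\star$-unit: the counit axioms give $a\star e = e\star a = a$ for every $a\in\End(H)$, and $\star$ is associative because $\mu$ is associative and $\Delta$ is coassociative, so that iterated convolutions $\mu^{\circ(k-1)}\circ(a_1\otimes\cdots\otimes a_k)\circ\Delta^{\circ(k-1)}$ are unambiguous. By definition of a Hopf algebra, $S$ is the two-sided $\star$-inverse of $\id_H$, i.e. $S\star\id_H = \id_H\star S = e$.

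First I set $x \coloneqq \iota\circ\epsilon - \id_H$, so that $\id_H = e - x$, and $x$ is $\star$-nilpotent by hypothesis, say $x^{\star N} = 0$. This makes the candidate $T \coloneqq \sum_{k\geq 0} x^{\star k}$ a genuine finite sum $\sum_{k=0}^{N-1} x^{\star k}$ in $\End(H)$, under the convention $x^{\star 0} = e$. The next step is a telescoping computation: since $\star$ is bilinear,
$$\id_H \star T = (e - x)\star T = \sum_{k=0}^{N-1} x^{\star k} - \sum_{k=0}^{N-1} x^{\star(k+1)} = x^{\star 0} - x^{\star N} = e\, .$$
Thus $T$ is a right $\star$-inverse of $\id_H$, and because $\id_H$ already has the two-sided inverse $S$, associativity forces $T = S\star(\id_H\star T) = S\star e = S$. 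This is precisely the first claimed identity $S = \sum_{k\geq 0}(\iota\circ\epsilon - \id_H)^{\star k}$.

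For the second identity I would simply unfold each power. Writing $x = -(\id_H - \iota\circ\epsilon)$ gives $x^{\star k} = (-1)^k (\id_H - \iota\circ\epsilon)^{\star k}$, and expanding the $k$-fold convolution by its definition yields $x^{\star k} = (-1)^k \mu^{\circ(k-1)}\circ(\id_H - \iota\circ\epsilon)^{\otimes k}\circ\Delta^{\circ(k-1)}$. Summing over $k$ reproduces the right-hand side, where the degenerate case $k=0$ matches the stated conventions $\Delta^{\circ(-1)} = \epsilon$ and $\mu^{\circ(-1)} = \iota$, since then the term reads $\iota\circ\epsilon = e = x^{\star 0}$. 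There is no deep obstacle here: the only points requiring care are the boundary conventions at $k=0$ (and $k=1$) and the verification that the convolution product is associative with unit $e$, which is exactly what legitimizes both the telescoping above and the identification of the iterated convolution with $\mu^{\circ(k-1)}\circ(\cdots)^{\otimes k}\circ\Delta^{\circ(k-1)}$.
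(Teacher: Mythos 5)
Your proof is correct. The paper does not actually prove this proposition --- it is quoted as Takeuchi's Lemma 14 in the form given in Grinberg--Reiner --- and your argument (identify $S$ as the two-sided $\star$-inverse of $\id_H$ in the convolution monoid with unit $\iota\circ\epsilon$, write $\id_H = e - x$, and truncate the geometric series using $\star$-nilpotency, with the telescoping sum certifying the inverse) is exactly the standard proof found in those references, including the correct handling of the $k=0$ boundary conventions.
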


Note that, for any filtered Hopf algebra, $( \iota \circ \epsilon - \id_H)$ is $\star$-nilpotent.
Therefore, for any pattern algebra, Takeuchi's formula holds.

\

In the Hopf algebra of polynomials, this gives us the following:
$$S(x^3) = \underbrace{0}_{k = 0} - \underbrace{x^3}_{k = 1} + \underbrace{3 x^2 \cdot x + 3 x \cdot x^2}_{k=2} - \underbrace{6 x \cdot x \cdot x}_{k = 3} = - x^3 \, .$$

We now present another example, this time on the permutation pattern Hopf algebra $\mathcal A(\mathtt{Per})$.
Consider $\pi = 132 = 1 \oplus 21$. Then Takeuchi's formula gives us:
\begin{align*}
S(\pat_{132}) =& \sum_{k=0}^2 (-1)^k \mu^{\circ k-1} \circ (\id_{\mathcal A(\mathtt{Per})} - \iota \circ \epsilon)^{\otimes k} \circ \Delta^{\circ k-1}(\pat_{132})\\
=& -(\id_{\mathbb{K}[x]} - \iota \circ \epsilon)(\pat_{132}) + \mu \circ (\id_{\mathbb{K}[x]} - \iota\circ\epsilon)^{\otimes 2}(\pat_1 \otimes \pat_{12}) \\
=& - \underbrace{\pat_{132}}_{k=1} + \underbrace{\pat_1 \pat_{21}}_{k=2} \\
=& 3 \pat_{321} + 2 \pat_{231} + 2 \pat_{312} + \pat_{213} + 2 \pat_{21} \, .
\end{align*}

\

These coefficients can be seen as enumerating quasi-shuffle signatures of $132$ from $1$ and $12$ that are \textbf{interlacing}, according to \cref{thm:antipode_perms_intro}.

\

\subsection{The sign-reversing involution method}

The application of the sign-reversing involution method to compute antipodes of Hopf algebras was first presented in \cite{BS2017}.
This is a method to find cancellation-free formulas for the antipode of a Hopf algebra.
It starts in the formula given by Takeuchi, and keeps track of all the terms to be summed, usually by means of compositions, that arise in this formula.
Thus, the sum obtained runs over a collection of objects, say $\mathcal O$, that is partitioned into families indexed by compositions.
These compositions play an important role in the sum, as their length determines the sign of the corresponding objects.

\

Recall that an involution $\zeta $ is an endomorphism such that $\zeta \circ \zeta$ is the identity.
We describe an involution in $\mathcal O$, call it $\zeta $, in such a way that if $\zeta(x) \neq x$, then $x$ and $\zeta(x)$ contribute with opposite signs to the antipode.
As a consequence, when applying Takeuchi's formula, we can cancel terms that are not fixed points of $\zeta$.
We give an example of how this method is applied in the Hopf algebra $\mathbb{K}[x]$.

\

The following is a computation from \cite{BS2017}.
We would like to point out that there are easier ways to obtain an antipode formula for the polynomial Hopf algebra: the interested reader can find some for instance in \cite{GrinbergReiner}.
However, it shows the power of Takeuchi’s formula, as the whole process can be done with little recourse to intuition.

\begin{thm}[The antipode formula for the polynomial Hopf algebra]\label{thm:polyHA}
The antipode $S$ for $\mathbb{K}[x] $ is 
$$ S(x^n) =(-x)^n\, . $$
\end{thm}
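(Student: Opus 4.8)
The plan is to apply Takeuchi's formula (\cref{lm:takeuchi}) directly to the monomial $x^n$ in the polynomial Hopf algebra $\mathbb{K}[x]$, and then reorganize the resulting multi-index sum into a form where the sign-reversing involution method of Benedetti and Sagan produces massive cancellation, leaving only a single surviving term. First I would recall the relevant Hopf structure on $\mathbb{K}[x]$: the coproduct is $\Delta(x) = x \otimes 1 + 1 \otimes x$, so that $x^n$ is primitive-generated and $\Delta(x^n) = \sum_{j} \binom{n}{j} x^j \otimes x^{n-j}$, while the counit $\epsilon$ kills all positive-degree monomials. The key computation is to evaluate the iterated coproduct $\Delta^{\circ(k-1)}(x^n)$ composed with $(\id - \iota\circ\epsilon)^{\otimes k}$: this selects only those terms in which every one of the $k$ tensor factors has strictly positive degree, so the surviving terms are indexed by compositions $(c_1, \dots, c_k)$ of $n$ into exactly $k$ positive parts. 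The multinomial coefficient $\binom{n}{c_1, \dots, c_k}$ records how many ways the $n$ copies of $x$ distribute, and then $\mu^{\circ(k-1)}$ collapses the product back to $x^n$.

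The next step is to assemble these contributions. Summing over all $k$ and all compositions of $n$, Takeuchi's formula yields
\begin{equation}\label{eq:takeuchi_poly_expand}
S(x^n) = \sum_{k \geq 1} (-1)^k \sum_{\substack{(c_1, \dots, c_k) \\ c_i \geq 1,\ \sum c_i = n}} \binom{n}{c_1, \dots, c_k}\, x^n\, .
\end{equation}
This exhibits $S(x^n)$ as a signed sum over the set $\mathcal{O}$ of pairs (composition of $n$, distribution of the $n$ labeled copies of $x$ into the parts), with the sign $(-1)^k$ determined by the length $k$ of the composition, exactly fitting the general setup described before the theorem. Concretely, an object of $\mathcal{O}$ is an ordered set partition of $\{1, \dots, n\}$ into nonempty blocks, weighted by $(-1)^{(\text{number of blocks})}$.

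The heart of the argument is to define a sign-reversing involution $\zeta$ on $\mathcal{O}$ that pairs up objects of opposite sign, so that the signed sum telescopes. The natural candidate is a ``merge-or-split'' involution acting on the first one or two blocks of the ordered set partition: pick a canonical rule — for instance, look at the element $1$ — and either split the block containing it or merge it with its neighbor depending on some parity or minimality condition, changing $k$ by exactly one and hence flipping the sign. The main obstacle, and the step requiring the most care, will be defining $\zeta$ so that it is genuinely an involution with exactly one fixed point: the involution must be well-defined, must satisfy $\zeta\circ\zeta = \id$, and must leave uncancelled precisely the single composition that contributes $(-1)^n$ (namely the length-$n$ composition $(1,1,\dots,1)$, whose multinomial coefficient is $n!/1!\cdots 1! = n!$, but which must survive with the right total weight). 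I would need to check that every non-fixed object $x$ and its image $\zeta(x)$ carry the same multinomial weight but opposite signs, so their contributions cancel, and then verify that the unique fixed point contributes exactly $(-x)^n$. The cleanest realization is likely to track the parity of block sizes or a lexicographically-first violation, and the verification that this rule is an involution respecting the weights is where the real work lies; once that is established, the formula $S(x^n) = (-x)^n$ follows immediately.
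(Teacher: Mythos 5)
Your setup is exactly the paper's: Takeuchi's formula, the observation that $(\id-\iota\circ\epsilon)^{\otimes k}$ kills the terms with an empty factor, and the reindexing of the surviving terms as ordered set partitions of $[n]$ into nonempty blocks weighted by $(-1)^{\#\text{blocks}}$. The gap is that you never actually define the involution $\zeta$, and the candidate you gesture at would not work as stated. A rule that decides ``merge or split'' by looking only at block sizes (split the block containing $1$ if it is not a singleton, merge it with its neighbour if it is) fixes \emph{every} ordered set partition into singletons, of which there are $n!$; the signed sum would then come out to $(-1)^n\, n!\, x^n$, not $(-x)^n$. This is precisely the subtlety you flag yourself when you note that the composition $(1,1,\dots,1)$ carries multinomial weight $n!$ ``but must survive with the right total weight'': the involution must cancel $n!-1$ of those $n!$ singleton orderings, so it cannot be blind to the \emph{order} of the singleton blocks.

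The paper resolves this by building an order condition into the pivot: for $\opi=(A_1,\dots,A_k)$ let $j$ be the smallest index with $|A_j|\neq 1$ \emph{or} $\max A_j>\max A_{j+1}$; if $A_j$ is a singleton, merge it with $A_{j+1}$ (its element is then the maximum of the merged block), and if not, split off $\{\max A_j\}$ in front. The two cases are mutual inverses because the split always places the maximum first and the merge only fires when the leading singleton dominates the next block, so the unique fixed point is the single increasing arrangement $(\{1\},\dots,\{n\})$, contributing $(-1)^n x^n=(-x)^n$. Until you specify a rule with this property and check the two cases invert each other, the proof is incomplete at its central step.
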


To prove this formula, we first introduce \textit{weak compositions}.
A \textbf{weak composition} $\alpha$ of an integer $n$ is a list of non-negative integers $\alpha = (\alpha_1, \dots , \alpha_l)$ such that $\sum_i \alpha_i = n$.
We denote the length of the weak composition by $l = l(\alpha)$, and use the shorthand notation $\alpha \models^0 n$.
If $\alpha $ has no zero entries, we say that $\alpha$ is a \textbf{composition}, and write $\alpha \in \mathcal C_n$.

\

A \textbf{weak set composition} $\opi$ of a set $A$ is a list of pairwise disjoint sets $\opi = (A_1, \dots , A_l)$ such that $\bigcup_i A_i = A$.
Note that some sets may be empty.
We denote the length of the weak composition by $l= l(\opi)$, and use the shorthand notation $\opi \models^0 A$ to indicate that $\opi$ is a weak set composition of the set $A$.
If the weak set composition does not contain any empty set, we say that this is a \textbf{set composition}, and we denote $\opi \models A$ to indicate that $\opi$ is a set composition of the set $A$.
There are finitely many set compositions of a given set $A$, whereas there are infinitely many weak set compositions of $A$.
Let $\mathbf{C}_A$ be the collection of set composition of $A$.
We abbreviate to $\mathbf{C}_n$ when $A = [n]$.

\begin{proof}[Proof of \cref{thm:polyHA}]
We use Takeuchi formula, given in \eqref{eq:eq1}, which holds for graded Hopf algebras,
$$S(x^n) = \sum_{k = 0} (-1)^k \mu^{\circ (k-1)} \circ (\id_{\mathbb K[x]} - \iota \circ \epsilon )^{\otimes k} \circ \Delta^{\circ (k-1)} (x^n) \, .$$

The key is use:
$$ \Delta^{\circ (k-1) } (x^n) = \sum_{(A_1, \dots , A_k) \models^0 [n] } x^{|A_1|} \otimes \dots \otimes x^{|A_k|} \, ,$$
where the sum runs over weak set compositions of the set $[n]$ with lenght $k$.
This can be shown with easy induction on $n$.
Thus, the antipode formula can be rewritten as

\begin{align*}
S(x^n)&= \sum_{k=0}^n(-1)^k \sum_{(A_1, \dots , A_k) \models^0 [n]} \mu^{\circ (k-1)} \circ (\id_{\mathbb K[x]} - \iota \circ \epsilon )^{\otimes k} (x^{|A_1|} \otimes \dots \otimes x^{|A_k|})   \\
	  &= \sum_{k=0}^n(-1)^k \sum_{(A_1, \dots , A_k) \models [n]} \mu^{\circ (k-1)} (x^{|A_1|} \otimes \dots \otimes x^{|A_k|})   \\
	  &= \sum_{k=0}^n(-1)^k \sum_{(A_1, \dots , A_k) \models [n]} x^n   \\
	  &= x^n \sum_{\opi \models [n]} (-1)^{l(\opi)}\, .
\end{align*}

Consider the following involution $\zeta: \oPi_{n} \to \oPi_{n} $.
For $\opi = (A_1, \dots , A_k) $, let $j_{\opi}$ be the smallest index such that $|A_{j_{\opi}}| \neq 1$ or $\max A_{j_{\opi}} > \max A_{j_{\opi}+1} $.
Then, there are three cases:

\begin{enumerate}

\item The set $A_{j_{\opi}} $ is a singleton with $j_{\opi}\leq k-1$, then let $\zeta(\opi ) $ be the set composition resulting from merging $A_{j_{\opi}} $ and $A_{j_{\opi}+1}$.
Note how, in this case, $\max ( A_{j_{\opi}} \cup A_{j_{\opi} + 1} ) $ is the only element in $A_{j_{\opi}}$.

\item The set $A_{j_{\opi}} $ is not a singleton, then we define $\zeta(\opi ) $ to be the set composition resulting from splitting $A_{j_{\opi}} $ into $\{ \max A_{j_{\opi}} \} $ and $A_{j_{\opi}} \setminus \{\max A_{j_{\opi}} \}$, in this order.

\item There is no such $j_{\opi}$. Then $\opi = (\{1\}, \dots , \{n\})$ and we define $\zeta(\opi )= \opi $.

\end{enumerate}

It is a direct observation that $\zeta $ is an involution.
In fact, the only fixed point is $\opi = (\{1\}, \dots , \{n\})$, and for any other set composition $\opi$, whenever the index $j_{\opi}$ in $\opi $ behaves as described in case 1, then the index $j_{\zeta(\opi)}$ in $\zeta ( \opi ) $ behaves as described in case 2, in which case we have $l(\opi) = 1+l(\zeta (\opi))$ and we can easily see that $\zeta(\zeta(\opi )) = \opi$.
Thus, we have that 
\[x^n \sum_{\opi \in \oPi_n} (-1)^{l(\opi)} = x^n (-1)^{l(\{1\}, \dots, \{n\} )}= (-x)^n,\] 
as desired.
\end{proof}

In this way we see that a formula for the antipode depends simply on an understanding of the structure of the length of compositions of $[n]$.
This is a general feature whenever we apply Takeuchi's formula.

\

\section{Combinatorial species\label{sec:species}}

In this section we will give the preliminaries of monoids in species.
This will follow closely \cite{AM2010} and \cite{Schmitt1993}.
Specifically, we will introduce species with vector spaces and sets.
We will also introduce species with restrictions, and we will clarify the meaning of a monoid, comonoid, bimonoid and Hopf monoid in each of these monoidal categories.
We will finally present some examples of species with restrictions that will be important in the remaining paper. 

\subsection{Species}
In this section we recall the basic definitions of the general theory of \emph{combinatorial species}. Following \cite{AM2010}, we will focus first in \emph{vectorial species} and \emph{set species}.

\

Let $\mathbb{K}$ be a field of arbitrary characteristic. Let $\Fset$ be the category of finite sets and bijections between finite sets, and $\Vect_{\mathbb{K}}$ be the category of $\mathbb{K}$-vector spaces and linear maps between vector spaces. A {\bf vector species}, or simply a \textbf{species}, is a functor $\tp: \Fset \to \Vect_{\mathbb{K}}$. A morphism between species $\tp$ and $\tq$ is a natural transformation between the functors $\tp$ and $\tq$.
We will always denote the vector species with a bold lowercase Latin letter, with few exceptions.

\

A species $\tp$ is said {\bf positive} is $\tp[\emptyset]=0$. The {\bf positive part} of a species $\tq$ is the positive species $\tq_+$ given by
\[\tq_+[I]=\begin{cases}
\tq[I],& \text{ if } I\neq \emptyset\\
\emptyset, & \text{ otherwise}
\end{cases}.\]

Given a vector space $V$, let ${\bf 1}_V$ be the vector species defined by
\[{\bf 1}_V[I]=\begin{cases}
V,& \text{ if } I= \emptyset\\
\emptyset, & \text{ otherwise}
\end{cases}.\]

\

We write $\Ss_{\mathbb{K}}$ for the category of vector species over the field $\mathbb{K}$. There are several possible monoidal structures on this category. 
We will consider two of them: the {\bf Cauchy} and {\bf substitution} products $\cdot$ and $\circ$, respectively: for any finite set $I$,
\[(\tp \cdot \tq)[I]:=\bigoplus_{I = S \sqcup T}\tp[S]\otimes \tq[T];\]
\[(\tp \circ \tq)[I]:=\bigoplus_{X \vdash I}\tp[X]\otimes \left(\bigotimes_{B \in X} \tq[S] \right).\]

We denote by $(\Ssk, \cdot)$ and $(\Ssk, \circ)$ the resulting monoidal categories obtained from the Cauchy and substitution operations, respectively.

\

We can also consider {\bf set species}, whose are simply functors $\rP: \Fset \to \Set$, where $\Set$ is the category of arbitrary sets and arbitrary maps between sets. Given a set species $\rP$, the notions of {\bf positive part} $\rP_+$ of $\rP$, {\bf positive set species} are defined analogously as for vector species. If $C$ is a set, let ${\bf 1}_C$ be the set species defined by
\[{\bf 1}_C[I]=\begin{cases}
C,& \text{ if } I= \emptyset\\
\emptyset, & \text{ otherwise}
\end{cases},\]
for any finite set $I$.
We will always denote a set species with a capital Latin letter, with few exceptions.

\

The Cauchy and substitution products of vector species have their analogues in this context. For instance, if $\rP$ and $\rQ$ are two set species, let
\[(\rP \cdot \rQ)[I]:=\bigsqcup_{I = S \sqcup T}\rP[S]\times \rQ[T];\]
\[(\rP \circ \rQ)[I]:=\bigsqcup_{X \vdash I}\rP[X]\times \left(\prod_{B \in X} \rQ[S] \right),\]
on any finite set $I$, where the $\times$ symbol in the right-hand sides refers to the Cartesian product.
We write $(\Ss, \cdot)$ for the monoidal category of set species.

\

It is possible to relate set species to vector species via the \emph{linearization functor} $\mathbb{K}(-): \Set \to \Vect_{\mathbb{K}}$, which sends a set to the vector space generated by the given set. Composing a set species $\rP$ with the linearization functor gives a vector species, denoted by $\mathbb{K}\rP$. A {\bf linearized species} is a vector species $\tp$ of the form $\tp=\mathbb{K}\rP$, for some set species $\rP$. We have natural isomorphisms
 \[\mathbb{K}(\rP \cdot \rQ)\simeq \mathbb{K}\rP \cdot \mathbb{K}\rQ \qquad , \qquad \mathbb{K}(\rP \circ \rQ)\simeq \mathbb{K}\rP \circ \mathbb{K}\rQ.\]

\

\subsection{Algebraic structures on vector species}

\subsubsection{Monoids}
A {\bf monoid} in $(\Ssk, \cdot)$  consist of a species $\tp$ equipped with morphisms of species
\begin{equation*}
    \mu: \tp \cdot \tp \to \tp \qquad \text{ and } \qquad \iota: \mathrm{1}_{\mathbb{K}} \to \tp.
\end{equation*}

That is, for each finite set $I$ and for each decomposition $I=S \sqcup T$, we have a linear map 
\begin{equation*}
    \mu_{S,T}: \tp[S] \otimes \tp[T]\to \tp[I] \text{ and } \iota_\emptyset: \mathbb{K} \to \tp[\emptyset].
\end{equation*}

If $x \in \tp[S]$, $y \in \tp[T]$,  let 
\[x \cdot y \in \tp[I]\]
denote the image of $x\otimes y$ under $\mu_{S,T}$. 

\

The collection of linear maps $\mu=(\mu_{S,T})$, called the {\bf product} of the monoid, must satisfy the following axioms.

\

\begin{itemize}
    \item[(i)] Naturality axiom: for finite sets $I,J$, a bijection $\sigma: I \to J$, a decomposition $I=S \sqcup T$ and elements $x \in \tp[S]$ and $y \in \tp[T]$, we have
\begin{equation*}
\tp[\sigma](x \cdot y)=\tp[\sigma(S)](x) \cdot \tp[\sigma(T)](y) \, .
\end{equation*}

\item[(ii)] Associativity axiom: for finite set $I$, a decomposition $I=R \sqcup S \sqcup T$ and for elements $x \in \tp[R]$, $y \in \tp[S]$ and $z \in \tp[T]$, we have
\begin{equation}\label{eq:axiomii}
    (x \cdot y)\cdot z=x \cdot (y \cdot z).
\end{equation}

\item[(iii)] Unit axiom: for each finite set $I$ and $x \in \tp[I]$, we have
\begin{equation*}
 x \cdot \iota_\emptyset(1) = x =  \iota_\emptyset(1) \cdot x,
\end{equation*}
where $1\in \mathbb{K}$ is the unit of the filed $\mathbb{K}$.

\end{itemize}

\

A monoid $(\tp, \mu, \iota)$ in $(\Ssk, \cdot)$ is {\bf commutative} if
\begin{equation*}
    x\cdot y=y\cdot x,
\end{equation*}
for all $I=S \sqcup T$, $x \in \tp[S]$ and $y \in \tp[T]$.

\

Let $(\tp, \mu, \iota)$ be a monoid. From the associativity axiom, for any decomposition $I=S_1 \sqcup \cdots \sqcup S_k$ with $k \geq 2$ there is a unique map
\begin{equation}
    \mu_{S_1, \hdots, S_k}: \tp[S_1]\otimes \cdots \otimes \tp[S_k] \to \tp[I],
\end{equation}
called the {\bf higher product map} of $\tp$, obtained by iterating the product maps in any meaningful way. 
This is well defined from \eqref{eq:axiomii}.
We can extend the definition of higher product map for all $k\geq 0$: for $k=1$, $\mu_I$ is defined as the identity map of $\tp[I]$ ($I$ is the only decomposition of itself in one block); if $k=0$, then $\mu_{\empty}:=\iota_{\empty}$.
Note how in this case $I = \emptyset $.
\

Monoids are closed under the Cauchy product. Also, if $(\tp, \mu, \iota)$ is a monoid, then $\tp[\emptyset]$ is an algebra with product $\mu_{\emptyset, \emptyset}$ and unit $\iota_\emptyset(1)$ (see \cite{AM2013}, section 2.3).

\

\subsubsection{Comonoids}
A {\bf comonoid} in $(\Ssk, \cdot)$ corresponds to the dual notion of monoids in $(\Ssk, \cdot)$. Formally, a comonoid consists of a species $\tp$ equipped with morphisms of species
\begin{equation*}
    \Delta: \tp \to \tp \cdot \tp \qquad \text{ and } \qquad \varepsilon: \tp \to 1_{\mathbb{K}}.
\end{equation*}

That is, for each finite set $I$ and for each decomposition $I=S \sqcup T$, we have a linear map 
\begin{equation*}
    \Delta_{S,T}: \tp[I] \to \tp[S] \otimes \tp[T] \text{ and } \varepsilon_\emptyset: \tp[\emptyset] \to \mathbb{K}.
\end{equation*}

\

The collection of linear maps $\Delta=(\Delta_{S,T})$, called the {\bf coproduct} of the monoid, must satisfies the following axioms.

\

\begin{itemize}
    \item[(i)] Naturality axiom: for finite sets $I, J$, bijection $\sigma: I \to J$, a decomposition $I = S \sqcup T$ and an element $x\in \tp [I]$.
\begin{equation*}
(\tp[\sigma|_S] \otimes \tp[\sigma|_T])\circ\Delta_{S,T}(x)=\Delta_{\sigma(S), \sigma(T)} \circ\tp[\sigma](x).
\end{equation*}

\item[(ii)] Coassociativity axiom: for a finite set $I$, a decomposition $I=R \sqcup S \sqcup T$ and for each $x \in \tp[I]$, we have
\begin{equation}\label{eq:coaxiomii}
    (\Delta_{R,S}\otimes \text{id}_{\tp[T]})\circ \Delta_{R \sqcup S, T}(x)=(\text{id}_{\tp[R]} \otimes \Delta_{S,T})\circ \Delta_{R, S \sqcup T}(x).
\end{equation}
\vspace{.1in}
\item[(iii)] Counit axiom: for each finite set $I$ and $x \in \tp[I]$, we have
\begin{equation*}
(\varepsilon_\emptyset \otimes \text{id}_{\tp[I]})\circ\Delta_{\emptyset, I}(x)= x = (\text{id}_{\tp[I]} \otimes \varepsilon_\emptyset)\circ\Delta_{I,\emptyset}(x).
\end{equation*}

\end{itemize}

\

A comonoid $(\tp, \mu, \iota)$ in $(\Ssk, \cdot)$ is {\bf cocommutative} if for any finite disjoint sets $S, T$, and element $x\in \tp[S\sqcup T]$, we have
\begin{equation*}
    \Delta_{S,T}(x)=\Delta_{T,S}(x).
\end{equation*}

\

Let $(\tp, \mu, \iota)$ be a comonoid. Dually to the case of monoids, every decomposition $I=S_1 \sqcup \cdots \sqcup S_k$ with $k \geq 0$ gives rises to a unique linear map
\begin{equation}
    \Delta_{S_1, \hdots, S_k}: \tp[I] \to \tp[S_1]\otimes \cdots \otimes \tp[S_k],
\end{equation}
called the {\bf higher coproduct map} of $\tp$, obtained by iterating the coproducts map $\Delta_{S,T}$. 
This is well defined because of \cref{eq:coaxiomii}.
çwe extend this definition to $k=1$ as the identity of $\tp[I]$ and for $k=0$, this map is the counit map $\varepsilon_\emptyset$.

\

Comonoids are closed under the Cauchy product. 
If $(\tp, \Delta, \varepsilon)$ is a comonoid, then $\tp[\emptyset]$ is a coalgebra with coproduct $\Delta_{\emptyset, \emptyset}$ and counit $\varepsilon_\emptyset$ (see \cite{AM2013}, section 2.4).

\

\subsubsection{Bimonoids  and Hopf monoids}
A {\bf bimonoid} $(\thh, \mu, \Delta, \iota, \varepsilon)$ in $(\Ssk, \cdot)$ is a monoid and comonoid such that the diagram
\[\xymatrix{
\thh[A] \otimes \thh[B] \otimes \thh[C] \otimes \thh[D] \ar[rr]^-{\cong} && \thh[A] \otimes \thh[C] \otimes \thh[B] \otimes \thh[D] \ar[dd]^-{\mu_{A,C}\otimes\mu_{B,D}}\\
&&\\
\thh[S_1]\otimes \thh[S_2]\ar[uu]^-{\Delta_{A,B}\otimes \Delta_{C,D}} \ar[r]_-{\mu_{S_1, S_2}} & \thh[I] \ar[r]_-{\Delta_{T_1, T_2}} & \thh[T_1]\otimes \thh[T_2]
}\]

commutes, where $I=S_1\sqcup S_2=T_1 \sqcup T_2$ are two decompositions of a finite set $I$ with the following resulting pairwise intersections:
\[A:=S_1\cap T_1 \quad , \quad B:=S_1 \cap T_2 \quad , \quad C:=S_2 \cap T_1 \quad , \quad D:=S_2 \cap T_2.\]
This is also schematically presented in \cref{fig:bimonoid}.

\begin{figure}
\includegraphics[scale=0.5]{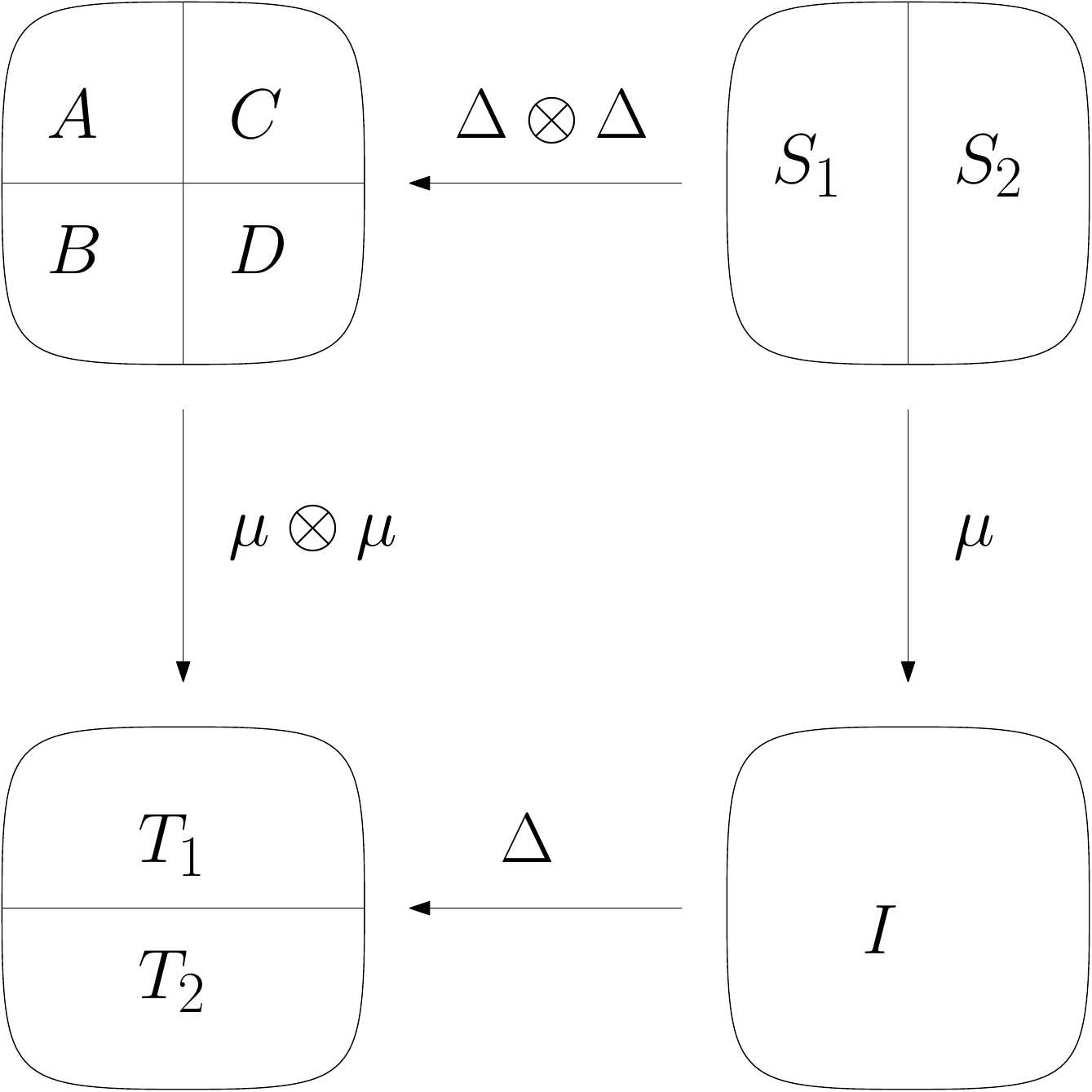}
\caption{The bimonoid compatibility axiom.\label{fig:bimonoid}}
\end{figure}

\

We can define the convolution algebra $\text{End}_{\Ssk}(\thh)$ as the set of natural transformations $l: \thh \to \thh$ with the product $\star$.

A morphism of species $s: \thh \to \thh$, is callen and {\bf antipode} of $\thh$, if $\thh[\emptyset]$ is a Hopf algebra with antipode $s_\emptyset: \thh[\emptyset] \to \thh[\emptyset]$, and for each nonempty set $I$, we have
\begin{equation}
    \sum_{S \sqcup T = I} \mu_{S,T}(\text{id}_S \otimes s_T)\Delta_{S,T} = 0 = \sum_{S \sqcup T = I} \mu_{S,T}(s_S \otimes \text{id}_T)\Delta_{S,T}.
\end{equation}

A {\bf Hopf monoid} in $(\Ssk, \cdot)$ is a bimonoid along with an antipode $s:\thh \to \thh $.
Equivalently, the identity map $\text{id}_{\thh}$ is invertible in $\text{End}_{\Ssk}(\thh)$ (c.f. \cite{AM2010}, section 2.7).
Recall that this is also the case in the classical Hopf algebras.
\

\subsection{Algebraic structures on set species}

The notions of monoid, comonoid, bimonoid and Hopf monoid for set species can be described in terms similar to those in the previous section. 

\

\subsubsection{Monoids}
A {\bf monoid in set species} consist of a species $\rP$ equipped with morphisms of species
\begin{equation}
    \mu: \rP \cdot \rP \to \rP \qquad \text{ and } \qquad \iota: \mathrm{1}_{\{\emptyset \} } \to \rP.
\end{equation}

That is, for each decomposition $I=S \sqcup T$, we have maps 
\begin{equation}
    \mu_{S,T}: \rP[S] \times \rP[T]\to \rP[I] \text{ and } \iota_\emptyset: \{\emptyset\} \to \rP[\emptyset].
\end{equation}

If $x \in \rP[S]$, $y \in \rP[T]$,  let 
\[x \cdot y \in \rP[I]\]
denote the image of $(x,y)$ under $\mu_{S,T}$. Also, let $e\in \rP[\emptyset]$ denote the image of $\emptyset$ under $\iota_\emptyset$.

\

The collection of maps $\mu=(\mu_{S,T})$, called the {\bf product} of the monoid, must satisfies naturality, associativity and unit axioms analogue to the ones defined for monoids in vector species.

\

Note that, for any monoid on a set species $(\rP, \mu, \iota)$, then $(\rP[\emptyset], \mu_{\emptyset, \emptyset}, \iota_\emptyset)$ is a set theoretical monoid.

\

A monoid in set species $(\rP, \mu, \iota)$ is {\bf commutative} if
\[x\cdot y=y\cdot x,\]
for all $I=S \sqcup T$, $x \in \rP[S]$ and $y \in \rP[T]$.

\

\subsubsection{Comonoids}
A {\bf comonoid in set species} consist of a species $\rP$ equipped with morphisms of species
\[\Delta:\rP \to \rP \cdot \rP \qquad \text{ and } \qquad \varepsilon: \rP \to \mathrm{1}_{\{\emptyset \} }. \]
That is, for each decomposition $I=S \sqcup T$ we have maps $\Delta_{S,T}: \rP[I]\to \rP[S] \times \rP[T]$, and $\varepsilon_\emptyset: \rP[\emptyset] \to \{\emptyset\}$. If $x \in \rP[I]$,  let 
\[(x|_S, x /_S)\in \rP[S]\times \rP[T]\]
denote the image of $(x,y)$ under $\Delta_{S,T}$. The map $x \mapsto x|_S$ can be thought as a ``restriction'' of the structure $x$ from $I$ to $S$, while $x \mapsto x/_S$ can be associated to a ``contraction'' of $S$ from $x$, resulting in a structure on $T$.

\

The collection of maps $\Delta=(\Delta_{S,T})$, called the {\bf coproduct} of the monoid, must satisfy naturality, coassociativity and counit axioms analogues to the ones defined for monoids in vector species. In terms of the restriction/contraction notation, these axioms are described as follows:
\begin{itemize}
    \item Naturality axiom: for each bijection $\sigma: I \to J$,  we have
\[{\Big (}\rP[\sigma](x){\Big )}|_{\sigma(S)}=\rP[\sigma|_S](x|_S) \qquad , \qquad {\Big (}\rP[\sigma](x){\Big )}/_{\sigma(S)}=\rP[\sigma|_T](x/_S),\]
for all $x \in \rP[I]$.
\vspace{.1in}
\item Coassociativity axiom: for all decomposition $I=R \sqcup S \sqcup T$, \[(x|_{R\sqcup S})|_R=x|_R \qquad , \qquad (x|_{R\sqcup S})/_R=(x/_R)|_S \qquad , \qquad x/_{R \sqcup S}=(x/_R)/_S, \]
for all $x \in \rP[I]$
\vspace{.1in}
\item Counit axiom: we have
\[x|_I= x = x/_\emptyset,\]
for each finite set $I$ and for each $x \in \rP[I]$. In particular, $\Delta_{\emptyset, \emptyset}(x)=(x,x)$, for each $x \in \rP[\emptyset]$.
\end{itemize}

\

A comonoid in set species $(\rP, \Delta, \varepsilon)$ is {\bf cocommutative} if
\[x|_S=x/_T,\]

for any disjoint finite sets $S, T$ and $x \in \rP[S\sqcup T]$.

\

\subsubsection{Bimonoids}
A bimonoid in set species $(\rH, \mu, \Delta, \iota, \varepsilon)$ is a monoid and comonoid in set species such that the diagram
\[\xymatrix{
\rH[A] \times \rH[B] \times\rH[C] \times \rH[D] \ar[rr]^-{\cong} && \rH[A] \times \rH[C] \times \rH[B] \times \rH[D] \ar[dd]^-{\mu_{A,C}\times\mu_{B,D}}\\
&&\\
\rH[S_1]\times \rH[S_2]\ar[uu]^-{\Delta_{A,B}\times \Delta_{C,D}} \ar[r]_-{\mu_{S_1, S_2}} & \rH[I] \ar[r]_-{\Delta_{T_1, T_2}} & \rH[T_1]\times \rH[T_2]
}\]
commutes, where $I=S_1\sqcup S_2=T_1 \sqcup T_2$ are two decompositions of a finite set $I$ with the following resulting pairwise intersections:
\[A:=S_1\cap T_1 \quad , \quad B:=S_1 \cap T_2 \quad , \quad C:=S_2 \cap T_1 \quad , \quad D:=S_2 \cap T_2.\]

The compatibility axiom in the definition of bimonoid can be reformulated as
\[x|_A \cdot y|_C= (x\cdot y)|_{T_1} \qquad , \qquad x/_A \cdot y/_C=(x \cdot y)/_{T_1},\]
for any disjoint sets $S_1, S_2$, for elements $x \in \rH[S_1]$, $y\in \rH[S_2]$ and for any set $T_1\subseteq S_1 \sqcup S_2$, by letting $A= S_1 \cap T_1$ and $C= S_2\cap T_1$.

\

A {\bf Hopf monoid} in $(\Ss, \cdot)$ $\rH$ is a bimonoid in set species such that the monoid $\rH[\emptyset]$ is a group. Its antipode is the map $s_\emptyset: \rH[\emptyset]\to \rH[\emptyset]$ given by $s_\emptyset(x):=x^{-1}$.
This allows us to define an antipode map $s: H\to H$ via the Takeuchi formula, adapted to set species.

\

\subsection{Fock functor}
In \cite{AM2010} (Part III), a construction is presented allowing to produce a (graded) Hopf algebra from a Hopf monoid. This is a categorical approach of a construction due to Stover (\cite{Stover}, Section 14), studied later by Patras, Schocker and Reutenauer in \cite{PR2004}, \cite{PS2006} and \cite{PS2008}.

\

We recall briefly this construction. Let $\mathbb{K}$ be a field of characteristic zero. If $\tp \in \Ssk$, then there is an action of the symmetric group $\mathfrak{S}_n$ on $\tp[n]$ by relabeling, for each $n \geq 0$. We denote by $\tp[n]_{\mathfrak{S}_n}$ the \emph{space of $\mathfrak{S}_n$-coinvariants of $\tp[n]$}:
\begin{equation}
  \tp[n]_{\mathfrak{S}_n}:=\tp[n]/\left\langle \, x- \tp[\alpha](x) \, | \, \alpha \in \mathfrak{S}_n, x\in \tp[n] \,\right\rangle.
\end{equation}

\

Consider $\gVec$ be the category of graded vector spaces over $\mathbb{K}$. The functors $\Kc, \Kcb: \Ssk \to \gVec$ given by 
\begin{equation}
    \Kc(\tp):=\bigoplus_{n \geq 0}\tp[n] \qquad , \qquad \Kcb(\tp):= \bigoplus_{n \geq 0}\tp[n]_{\mathfrak{S}_n}
\end{equation}
are referred in \cite{AM2010} as \emph{full Fock funtor} and \emph{bosonic Fock functor}, respectively. From any monoid (resp. comonoid, Hopf monoid) $\tp$, it is possible to obtain algebras (resp. coalgebras, Hopf algebras) $\Kc(\tp)$ and $\Kcb(\tp)$ from those of $\tp$, together with certain canonical transformations (see \cite{AM2010}, section 15.2).

\

\section{The pattern Hopf algebra \label{sec:pattern_algebra_contruction}}

\subsection{Species with restriction}
The general setting for our approach to patterns is given by the notion of \emph{species with restrictions}, a terminology due to Schmitt (see \cite{Schmitt1993}) and used by the first author in \cite{Penaguiao2020}, where these were called combinatorial presheaves.

\

Let $\Fset_{\!\!\!\!\!\hookrightarrow}$ be the category of finite sets with injections as morphisms. A (set) {\bf species with restriction} is a contravariant functor $\prR:\Fset_{\!\!\!\!\!\hookrightarrow} \to \Set$. Given a species with restrictions $\prR$ and a couple of finite sets $I,J$ such that $J \subseteq I$, the {\bf restriction map} $\text{res}_{I,J}:=\text{res}[\hookrightarrow]$ is the image under the functor $\prR$ of the inclusion $J \hookrightarrow I$:
\[\text{res}_{I,J}: \prR[I]\to \prR[J].\]

By functoriality, these maps satisfy the contravariant axioms
\begin{equation}\label{Axrestr}
    \text{res}_{J,K}\circ\text{res}_{I,J}=\text{res}_{I,K} \qquad , \qquad \text{res}_{I,I}=\text{id}_{\prR[I]},
\end{equation}

for any finite sets $I \supseteq J \supseteq K$. 
Since any arbitrary injection equals a bijection followed by an inclusion, any species with restriction is equivalent to a set species together with restriction maps satisfying the axioms \eqref{Axrestr}.

\

Species with restrictions form a category $\Spr$, where the arrows are natural transformations between functors.
We denote species with restrictions with a typewriter typescript.

Notice that, for any finite set $C$, the set species $\mathrm{1}_C$ is also a species with restrictions, where $\text{res}_{\emptyset, \emptyset} = \id_C$.
\

\subsection{Schmitt's comonoid}
In \cite{Schmitt1993} (Section 3), Schmitt gave a construction of coalgebras and bialgebras from certain species. We will describes the coalgebra construction, following the notation of \cite{AM2010} (Section 8.7).

\

Given a species with restrictions $\prR$, we can constructs a linearized comonoid in $(\Ss, \cdot)$ as follows. Let $\trr=\mathbb{K}\prR$ be the linearization of $\prR$. Given a decomposition $I=S \sqcup T$, consider the linear map
\[
\Delta_{S,T}: \trr[I]\to \trr[S] \otimes \trr[T]
\]
given by
\begin{equation}\label{CoprodRestr}
\Delta_{S,T}(x):=\text{res}_{I,S}(x)\otimes \text{res}_{I,T}(x),
\end{equation}

for any $x \in \prR[I]$. Let $\epsilon_\emptyset: \trr[\emptyset]\to \mathbb{K}$ the linear extension of the map sending every element of $\prR[\emptyset]$ to $1$. Hence, we have the following result.

\begin{lm}[Schmitt]
The vector species $\trr$ is a linearized comonoid in $(\Ss, \cdot)$. In particular, the comonoid $\trr$ is cocommutative.
\end{lm}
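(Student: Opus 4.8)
The plan is to verify directly that $(\trr, \Delta, \epsilon)$ satisfies the three comonoid axioms—naturality, coassociativity and counit—and then to check cocommutativity, observing that every one of these reduces to the two functoriality identities \eqref{Axrestr} of the restriction maps. Since $\Delta$ is defined in \eqref{CoprodRestr} by the single formula $\Delta_{S,T}(x) = \text{res}_{I,S}(x)\otimes\text{res}_{I,T}(x)$, which is manifestly symmetric in $S$ and $T$, no clever bookkeeping is needed; the work is purely in chasing restriction maps.

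I would treat coassociativity first, as it is the heart of the matter. Writing out both sides of \eqref{eq:coaxiomii} for $x \in \trr[I]$ with $I = R \sqcup S \sqcup T$, the left-hand side $(\Delta_{R,S}\otimes\text{id})\circ\Delta_{R\sqcup S,T}(x)$ becomes, after expanding the inner coproduct, $\text{res}_{R\sqcup S,R}(\text{res}_{I,R\sqcup S}(x)) \otimes \text{res}_{R\sqcup S,S}(\text{res}_{I,R\sqcup S}(x)) \otimes \text{res}_{I,T}(x)$, and the composition law $\text{res}_{J,K}\circ\text{res}_{I,J} = \text{res}_{I,K}$ collapses this to $\text{res}_{I,R}(x)\otimes\text{res}_{I,S}(x)\otimes\text{res}_{I,T}(x)$. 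Expanding the right-hand side $(\text{id}\otimes\Delta_{S,T})\circ\Delta_{R,S\sqcup T}(x)$ in the same way yields the identical triple tensor. The counit axiom is even shorter: $\Delta_{\emptyset,I}(x) = \text{res}_{I,\emptyset}(x)\otimes\text{res}_{I,I}(x)$, and using $\text{res}_{I,I} = \text{id}$ together with the fact that $\epsilon_\emptyset$ sends every element of $\prR[\emptyset]$ to $1$ recovers $x$; the other counit relation is symmetric.

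For naturality I would use that a species with restrictions is, after discarding all non-bijective injections, a set species, whose action $\trr[\sigma]$ on a bijection $\sigma\colon I \to J$ is compatible with restriction along inclusions: the square formed by the inclusions $S \hookrightarrow I$ and $\sigma(S)\hookrightarrow J$ together with the bijections $\sigma$ and $\sigma|_S$ commutes, so applying the functor gives $\trr[\sigma|_S]\circ\text{res}_{I,S} = \text{res}_{J,\sigma(S)}\circ\trr[\sigma]$, and likewise for $T$; tensoring these two identities is exactly axiom (i). Finally, cocommutativity is immediate: applying the braiding to $\Delta_{S,T}(x) = \text{res}_{I,S}(x)\otimes\text{res}_{I,T}(x)$ swaps the two tensor factors and returns $\text{res}_{I,T}(x)\otimes\text{res}_{I,S}(x) = \Delta_{T,S}(x)$.

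The only genuine subtlety—and thus the main obstacle—is not any computation but the bookkeeping of the contravariant-versus-covariant conventions: $\prR$ is contravariant on all injections, so one must check that the restriction maps assemble into a covariant set species under bijections, and that the equivalence stated after \eqref{Axrestr} is invoked correctly when phrasing the naturality axiom. Once that translation is pinned down, each axiom is a one-line consequence of \eqref{Axrestr}.
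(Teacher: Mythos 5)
Your verification is correct, and it is the standard argument: the paper itself states this lemma without proof (deferring to Schmitt \cite{Schmitt1993}), and every axiom does reduce, exactly as you say, to the functoriality identities \eqref{Axrestr} plus the symmetry of the formula \eqref{CoprodRestr}. The only point you leave implicit is the adjective \emph{linearized}, but that is immediate since $\Delta_{S,T}$ sends each basis element $x$ to the pure tensor $\text{res}_{I,S}(x)\otimes\text{res}_{I,T}(x)$ of basis elements, i.e.\ it is the linearization of a set-level map $\prR[I]\to\prR[S]\times\prR[T]$.
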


Consider now a linearized comonoid $\tp=\mathbb{K}\rP$ in $(\Ss, \cdot)$. 
That is a comonoid in set species that has no notion of restriction.
In this case, the coproduct gives a pure tensor
\[\Delta_{S.T}(x)=x|_S \otimes x/_S,\]
for each $x \in \rP[I]$ and for each decomposition $I = S \sqcup T$. We may then define restriction maps on $\tp$ either by

\begin{align*}
\text{res}^{(1)}_{I,J}: \tp[I] &\to \tp[J] \qquad \qquad  \text{or}  &\text{res}^{(2)}_{I,J}: \tp[I] &\to \tp[J],\\
x&\mapsto x|_J \qquad &x&\mapsto x/_{I\setminus J}
\end{align*} 
for $x \in \tp[I]$. Each restriction map $\text{res}^{(1)}$ or $\text{res}^{(2)}$ turns $\tp$ into a species with restriction. When $\tp$ is cocommutative, then both restriction maps coincide. We have then the following characterization of species with restrictions (see \cite{AM2010}, Proposition 8.29, for other characterizations).

\begin{thm}\label{thm:swr_lcc}
There is an equivalence between the category of species with restrictions and the category of linearized cocommutative comonoids.
\end{thm}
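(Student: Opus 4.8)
The plan is to realize the equivalence by a pair of mutually quasi-inverse functors $F$ and $G$ and then to check that the two round-trips are the identity. The functor $F\colon \Spr \to \{\text{lin. cocomm. comonoids}\}$ sends a species with restrictions $\prR$ to its linearization $\trr = \Kb\prR$ equipped with the Schmitt coproduct from \eqref{CoprodRestr} and the counit $\epsilon_\emptyset$; that $F(\prR)$ is a cocommutative linearized comonoid is exactly the content of Schmitt's Lemma, already established above. On arrows, $F$ linearizes a natural transformation of $\Set$-valued functors; since such a transformation commutes with every restriction map, its linearization commutes with $\Delta$ and $\epsilon$, so it is a comonoid morphism.

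In the reverse direction I would define $G$ on objects by sending a linearized cocommutative comonoid $\tp = \Kb\rP$ to the underlying set species $\rP$, endowed with the restriction maps $\rest_{I,J}(x) := x|_J$ for $J \subseteq I$ and a basis element $x \in \rP[I]$. Here cocommutativity is indispensable: it is what forces $\rest^{(1)} = \rest^{(2)}$, i.e. $x|_J = x/_{I \setminus J}$, so the definition is unambiguous, and it guarantees that $x|_J$ is a single element of $\rP[J]$ (using that a linearized coproduct sends a basis element to a pure tensor of basis elements). The work in checking that $G$ is well defined is verifying the functoriality axioms \eqref{Axrestr}: the composition law $\rest_{J,K}\circ\rest_{I,J} = \rest_{I,K}$ reads $(x|_J)|_K = x|_K$, an instance of the coassociativity identity for the set comonoid, while $\rest_{I,I} = \id_{\rP[I]}$ is the counit identity $x|_I = x$. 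On arrows, a comonoid morphism preserves $\Delta$ and hence the first tensor factor, so it descends to a restriction-preserving natural transformation.

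I would then close the loop. For $G \circ F$ the restriction extracted from the Schmitt coproduct is $x|_S = \rest_{I,S}(x)$, so the original restriction maps are recovered verbatim and $G \circ F = \id_{\Spr}$. For $F \circ G$, the Schmitt coproduct rebuilt from $\rest_{I,S}(x) = x|_S$ and $\rest_{I,T}(x) = x|_T = x/_S$ (cocommutativity again) is precisely $x|_S \otimes x/_S$, the original coproduct, and the two counits agree because both send each basis element of $\rP[\emptyset]$ to $1$; hence $F \circ G = \id$, so the equivalence is in fact an isomorphism of categories.

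The step I expect to require the most care is the comparison of morphisms, that is, full faithfulness. The assignment \emph{linearize a restriction-preserving map} is transparently injective, but for the hom-sets to match one must know that every admissible comonoid morphism between two linearized comonoids is itself linearized, i.e. carries basis elements to basis elements. This is genuinely a constraint rather than a formality: for the exponential comonoid one already finds comonoid endomorphisms $x \mapsto a^{|I|}x$ that fail to be linearized unless $a = 1$. The clean resolution, which I would adopt, is to take the morphisms of the target category to be the linearized comonoid morphisms (equivalently, the morphisms of the underlying set comonoids), as in \cite{AM2010}; with that convention the two hom-sets correspond by construction, and pinning down exactly this point is where the subtlety of the statement lies. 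The remaining verifications—the restriction axioms and the two round-trip computations—are then routine applications of the comonoid axioms recorded above.
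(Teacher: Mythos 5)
Your proposal is correct and follows essentially the same route the paper takes in the discussion preceding the theorem: Schmitt's construction (linearization plus the coproduct \eqref{CoprodRestr}) in one direction, and the extraction of restriction maps $\rest^{(1)}=\rest^{(2)}$ from a cocommutative linearized coproduct in the other, with the restriction axioms \eqref{Axrestr} coming from coassociativity and the counit identity. Your closing remark about taking \emph{linearized} comonoid morphisms in the target category is a genuine point the paper leaves implicit (it defers to \cite{AM2010}, Proposition 8.29), and your resolution is the standard and correct one.
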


\

\subsection{Monoids with restriction}
We see now that the restriction structures are stable for the Cauchy product.
Let $\prP, \prQ$ be two species with restrictions.
Given two finite sets $I$ and $J$ with an inclusion $J \hookrightarrow I$, consider the map $\text{res}_{I,J}$ defined as the sum of the maps running over all decompositions $I=S\sqcup T$:

\[\xymatrix{
\prP[S]\times \prQ[T] \ar[rrr]^-{\text{res}_{S, S\cap J} \times \text{res}_{T, T\cap J}}&&& \prP[S\cap J]\times \prQ[T\cap J] \subseteq (\prP \cdot \prQ)[J],
}\]

where the first and second restrictions on the arrow above are the restrictions maps corresponding to $\prP$ and $\prQ$, respectively, from the inclusions $S \cap U \hookrightarrow S$ and $T \cap U \hookrightarrow T$, respectively. 

\

Therefore, the Cauchy product endows the category of species with restrictions of a monoidal category $(\Spr, \cdot, \mathrm{1}_{\{\emptyset\}})$.

\

We describe now monoids in the monoidal category $(\Spr, \cdot, \mathrm{1}_{\{\emptyset \} })$. A monoid $(\prP, \mu)$ in species with restrictions is a species with restrictions $\prP$, equipped with a product map $\mu: \prP \cdot \prP \to \prP$, such that for each $J \subseteq I=S\sqcup T$, the diagram
\[\xymatrix{
\rP[S]\times \rP[T]\ar[d]_-{\mu_{S,T}} \ar[rrr]^-{\text{res}_{S, S\cap J} \times \text{res}_{T, T\cap J}}&&&\rP[S \cap J]\times \rP[T\cap J]\ar[d]^-{\mu_{S \cap J, S\cap J}}\\
\rP[I]\ar[rrr]_-{\text{res}_{I,J}}&&& \rP[J]
}\]
commutes.

\

Given a monoid $\prP$ in the monoidal category of species with restrictions $(\Spr, \cdot)$, let $\tp:=\mathbb{K}\prP$ be the linearization of the underlying set species of $\prP$. 
By \cref{thm:swr_lcc}, $\tp$ is a cocommutative comonoid. Since $\prP$ is a monoid, then $\tp$ is a monoid in the category of vector species. Moreover, the above diagram implies that the product and coproduct of $\prP$ are compatible, meaning that $\prP$ is a cocommutative bimonoid. 
This proves the following:

\begin{thm}\label{MonRestr1}
There is an equivalence between the category of monoids in $(\Spr, \cdot)$ and the category of linearized cocommutative bimonoids.
\end{thm}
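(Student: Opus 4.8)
The plan is to establish the claimed equivalence of categories by exhibiting the correspondence in both directions and checking that it is well-defined on objects and morphisms. I already have, from \cref{thm:swr_lcc}, the equivalence between species with restrictions and linearized cocommutative comonoids. So the core of the argument is to add the monoid structure coherently to both sides and verify that the compatibility diagram for a monoid in $(\Spr, \cdot)$ is exactly the bimonoid compatibility axiom on the linearized side.

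\emph{From left to right.} Starting from a monoid $(\prP, \mu)$ in $(\Spr, \cdot)$, I would set $\tp := \mathbb{K}\prP$. By \cref{thm:swr_lcc}, the restriction structure on $\prP$ linearizes to a cocommutative comonoid structure $(\tp, \Delta, \varepsilon)$, where $\Delta_{S,T}(x) = \text{res}_{I,S}(x) \otimes \text{res}_{I,T}(x)$ as in \eqref{CoprodRestr}. Since $\mu$ is a morphism of set species, its linearization gives a monoid $(\tp, \mu, \iota)$ in $(\Ssk, \cdot)$. The key point is that the commuting square defining a monoid in species with restrictions, which relates $\mu_{S,T}$ to the restriction maps on a subset $J \subseteq I$, becomes precisely the bimonoid compatibility axiom once I feed in the reformulation $x|_A \cdot y|_C = (x\cdot y)|_{T_1}$. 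Concretely, reading the restriction square for the two decompositions $I = S_1 \sqcup S_2 = T_1 \sqcup T_2$ with the four intersections $A, B, C, D$ reproduces the square in the definition of a bimonoid in set species; linearizing this square yields the bimonoid axiom for $(\Ssk, \cdot)$. Cocommutativity is inherited from \cref{thm:swr_lcc}, so $\tp$ is a linearized cocommutative bimonoid.

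\emph{From right to left, and functoriality.} Conversely, given a linearized cocommutative bimonoid $\tp = \mathbb{K}\rP$, the comonoid part produces, via \cref{thm:swr_lcc}, a species with restrictions $\prR$ (with $\text{res}^{(1)} = \text{res}^{(2)}$ by cocommutativity), and the monoid part descends to a product map on the underlying set species because the structure is linearized. The compatibility square on species with restrictions then follows by reversing the reading above. I would then check that these two assignments are mutually inverse and natural: both use the same underlying-set-species functor already shown to be an equivalence in \cref{thm:swr_lcc}, so the only new content is that morphisms respecting the product on one side respect it on the other, which is immediate since linearization is faithful on morphisms of set species.

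\emph{Main obstacle.} The routine parts are the naturality and associativity bookkeeping inherited from \cref{thm:swr_lcc}. The genuinely delicate step is verifying that the single commuting square defining a monoid in $(\Spr, \cdot)$ — which is phrased for one inclusion $J \subseteq I = S \sqcup T$ — is equivalent to the full bimonoid compatibility axiom, which involves \emph{two} independent decompositions and the four-fold intersection pattern $A, B, C, D$. The crux is to show that quantifying the restriction square over all $J$ (equivalently, over all choices of $T_1 = J$ inside $I$) captures exactly the same data as the bimonoid square; this is where I would spend the most care, using the restriction/contraction reformulation $x|_A \cdot y|_C = (x \cdot y)|_{T_1}$ to match the two formulations block by block.
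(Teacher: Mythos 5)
Your proposal is correct and follows essentially the same route as the paper: the paper's argument is precisely the paragraph preceding the theorem, which linearizes $\prP$, invokes \cref{thm:swr_lcc} for the cocommutative comonoid part, and observes that the restriction square for the monoid structure (read with $J = T_1$ and $J = T_2$ for the two decompositions) is the bimonoid compatibility axiom. Your identification of the crux — matching the single square quantified over all $J \subseteq I$ with the four-block bimonoid square via $x|_A \cdot y|_C = (x\cdot y)|_{T_1}$ — is exactly the step the paper relies on, and you spell it out in somewhat more detail than the paper does.
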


\begin{thm}\label{MonRestr2}
If $\prP$ is a connected monoid in species with restrictions, then $\mathbb{K}\prP$ is a Hopf monoid in vector species.
\end{thm}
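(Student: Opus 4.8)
The plan is to combine the structural result already available with a connectedness argument that manufactures the antipode. By \cref{MonRestr1}, the linearization $\thh := \mathbb{K}\prP$ is a linearized cocommutative bimonoid in $(\Ssk, \cdot)$, so what remains is to exhibit an antipode; equivalently, by the characterization recalled above, to show that the identity $\id_{\thh}$ is invertible in the convolution algebra $\End_{\Ssk}(\thh)$ for the product $\star$. I would not try to guess a closed form for the inverse, but rather obtain it as a convergent geometric series à la Takeuchi, once connectedness is used to guarantee local nilpotency.

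First I would unpack the hypothesis. Since $\prP$ is connected, $\prP[\emptyset]$ is a single point, so $\thh[\emptyset] = \mathbb{K}\prP[\emptyset] = \mathbb{K}$ carries the trivial one-dimensional Hopf algebra structure, with antipode $s_\emptyset = \id_{\mathbb{K}}$. Set $f := \iota\circ\varepsilon - \id_{\thh}$, where $\iota\circ\varepsilon$ is the unit of the convolution algebra. The key elementary observation is that $f$ kills the empty part: on $\thh[\emptyset] = \mathbb{K}$ one has $\id = \iota\circ\varepsilon$, hence $f_{\emptyset} = 0$.

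The central step is to prove that $f$ is locally $\star$-nilpotent. For a finite set $I$, the $k$-fold convolution power evaluates, via the higher (co)product maps, as
\[
f^{\star k}|_{\thh[I]} = \sum_{I = S_1 \sqcup \dots \sqcup S_k} \mu_{S_1,\dots,S_k}\circ\bigl(f_{S_1}\otimes\dots\otimes f_{S_k}\bigr)\circ \Delta_{S_1,\dots,S_k},
\]
a sum over ordered decompositions of $I$ into $k$ possibly empty blocks. Because $f_{\emptyset} = 0$, every summand in which some block $S_j$ is empty vanishes, so only decompositions into $k$ nonempty blocks survive. Since $I$ admits no decomposition into more than $|I|$ nonempty blocks, we get $f^{\star k}|_{\thh[I]} = 0$ whenever $k > |I|$, which is exactly local $\star$-nilpotency.

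It then follows that $s := \sum_{k \geq 0} (\iota\circ\varepsilon - \id_{\thh})^{\star k}$ is well defined, restricting to a finite sum on each $\thh[I]$, and is a morphism of species, since $\iota$, $\varepsilon$, $\mu$ and $\Delta$ are, and $\star$ preserves naturality. Writing $u = \iota\circ\varepsilon$ for the convolution unit, we have $\id_{\thh} = u - f$, so $s = \sum_{k\geq 0} f^{\star k}$ is a two-sided $\star$-inverse of $\id_{\thh}$; evaluating $\id_{\thh}\star s = u = s\star\id_{\thh}$ on $\thh[I]$ with $I$ nonempty recovers precisely the two antipode identities, while on $\thh[\emptyset]$ it reduces to $s_\emptyset = \id_{\mathbb{K}}$. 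Hence $\thh = \mathbb{K}\prP$ is a Hopf monoid in vector species. The only delicate point is the local-nilpotency bound, which rests entirely on the vanishing $f_{\emptyset} = 0$ supplied by connectedness; the remaining verifications (naturality of $s$ and the antipode identities) are formal consequences of $s$ being the convolution inverse of $\id_{\thh}$.
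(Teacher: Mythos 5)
Your proof is correct and follows exactly the route the paper relies on: \cref{MonRestr1} supplies the cocommutative bimonoid structure, connectedness gives $\thh[\emptyset]=\mathbb{K}$ and hence local $\star$-nilpotency of $\iota\circ\varepsilon-\id_{\thh}$, and Takeuchi's geometric series (recalled in \cref{lm:takeuchi} and invoked throughout the paper for connected/filtered structures) produces the antipode. Nothing is missing.
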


\

\subsection{Pattern functions and the pattern Hopf algebra}

Given a species with restrictions $\prR$ and two finite sets $I$ and $J$, two objects $a\in \prR[I], b\in \prR[J]$ are said to be {\bf isomorphic objects}, or $a\sim b$, if there is a bijection $\sigma:I\to J$ such that $\prR[\sigma](b)=a$. 

\

The collection of equivalence classes of a species with restrictions $\prR$ is denoted by \begin{equation}
\mathcal{G}(\prR) := \bigcup_{n\geq 0 } \prR[n]_{\mathfrak{S}_n}.
\end{equation}
In this way, the set $\mathcal G(\prR) $ is the collection of all the $\prR$-objects up to isomorphism. It is straightforward to show that $\mathcal{G}(\prR)$ is a basis for the vector space $\Kcb(\prR)$.

\

Recall that for every couple of finite set $I,J$ such that $J \subseteq I$, there is a restriction map 
\[\text{res}_{I,J}: \prR[I] \to \prR[J]\]
defined as the image by $\prR$ of the injection map $J \hookrightarrow I$. If $b \in \prR[I]$, we denote $b|_J$ for $\text{res}_{I,J}(b)$.

\begin{defin}[Patterns coefficients]\label{defin:patterncoeff}
Let $\prR$ be a species with restrictions. Given two finite sets $I,J$ such that $J \subseteq I$ and two objects $b\in \prR[I], a\in \prR[J]$,
we say that the subset $J \subseteq I$ is a {\bf pattern} of $a$ in $b$ if $b|_{J} \sim a$. More precisely, $J \subseteq I$ is a pattern of $a$ in $b$ if there exists a bijection $\sigma: J \to J$ such that
\[\prR[\sigma](a)=\text{res}_{I,J}(b).\]

We define the {\bf pattern coefficient} of $a$ in $b$ as
\begin{equation}
    \binom{b}{a}_{\!\prR} : = \left| \{J \subseteq I \, : \, b|_J \sim a \} \right| \, .
\end{equation}
\end{defin}

\

This definition only depends on the isomorphism classes of $a \in \prR[J]$ and $b \in \prR[I]$; see \cite{Penaguiao2020}. This motivates the following notion.

\begin{defin}[Patterns functions]\label{defin:pattern}
Let $\prR$ be a species with restrictions. Given a finite set $I$, we define the {\bf pattern function associated to} $a \in \prR[I]$ as the function
\[\pat_a: \mathcal{G}(\prR) \to \mathbb{K} \]
given by
\begin{equation}
    b \mapsto \binom{b}{a}_{\!\prR},
\end{equation}
for all $b \in \mathcal{G}(\prR)$.
\end{defin}

By definition $\pat_a \in \mathcal{F}(\mathcal{G}(\prR), \mathbb{K})$, where $\mathcal{F}(\mathcal{G}(\prR), \mathbb{K})$ denotes the set of functions from $\mathcal{G}(\prR)$ to $\mathbb{K}$. Without loss of generality, we denotes by $\pat_a$ the linear extension of the pattern function associated to $a$. Hence, we can consider $\{ \pat_a \}_{a\in \mathcal{G}(\prR)}$ as a family of linear functions from $\Kcb(\prR)$ to $\mathbb{K}$, indexed by $\mathcal{G}(\prR)$.
In \cref{sec:speciespermutation}, we see an example of a species with restrictions structure on permutations.

\

\begin{defin}[Patterns space]
If $\prR$ is a species with restriction, then the linear span of the pattern functions is denoted by
\begin{equation}
    \mathcal{A}(\prR):=\mathbb{K}\{\pat_a \, : \, a\in \mathcal{G}(\prR)\}.
\end{equation}
\end{defin}

By definition, $\mathcal{A}(\prR)$ is a linear subspace of the space of linear functions $\Kcb(\prR)^*$ from $\Kcb(\prR)$ to $\mathbb{K}$. The following was proven in \cite{Penaguiao2020}.

\begin{thm}
The subspace $\mathcal{A}(\prR)$ of $\Kcb(\prR)^*$ is closed under pointwise multiplication and has a unit.
It forms an algebra, called the {\bf pattern algebra associated to} $\prR$.
More precisely, if $a, b \in \mathcal G(\prR)$,
\begin{equation}\label{eq:prodrule}
\pat_ a   \pat_b = \sum_c \binom{c}{a, b}_{\! \prR} \pat_c \, ,
\end{equation}
where the coefficients $\binom{c}{a, b}_{\!\prR}$ are the number of ``quasi-shuffles'' of $a, b$ that result in $c$, specifically, if we take $c\in \prR[C]$ to be a representative of the equivalence class $c$, then:
$$ \binom{c}{a, b}_{\!\prR} = \left| \{(I, J) \, \text{ such that } \, \,  I \cup J = C \, ,\, \, c|_{I} \sim a, \, c|_{J} \sim b \} \right| \, .  $$
\end{thm}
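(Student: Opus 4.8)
**

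The plan is to prove that the pattern algebra $\mathcal{A}(\prR)$ is closed under pointwise multiplication by computing the product of two basis functions $\pat_a$ and $\pat_b$ directly, and showing that the resulting function can be reexpanded in the pattern basis with the stated quasi-shuffle coefficients. The central observation is that $\mathcal{A}(\prR)$ consists of functions on the set $\mathcal{G}(\prR)$ of isomorphism classes, and pointwise multiplication is well-defined there. So for any $c \in \mathcal{G}(\prR)$, I would evaluate
$$
(\pat_a \pat_b)(c) = \pat_a(c)\, \pat_b(c) = \binom{c}{a}_{\!\prR}\binom{c}{b}_{\!\prR}.
$$
By \cref{defin:patterncoeff}, this is the number of pairs $(I,J)$ with $I,J \subseteq C$ (where $c \in \prR[C]$ is a fixed representative) such that $c|_I \sim a$ and $c|_J \sim b$, with no constraint relating $I$ and $J$.

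The key step is to reorganize this double count by bucketing the pairs $(I,J)$ according to the isomorphism class of the restriction $c|_{I\cup J}$. First I would fix a class $d \in \mathcal{G}(\prR)$ with representative $d \in \prR[D]$, and restrict attention to those pairs $(I,J)$ with $I \cup J = K$ for some fixed $K \subseteq C$ such that $c|_K \sim d$. The contravariant restriction axioms in \eqref{Axrestr} guarantee that $c|_I = (c|_K)|_I$ and likewise for $J$, so once we know the isomorphism type $d$ of $c|_K$, the number of ways to write $K = I \cup J$ with $(c|_K)|_I \sim a$ and $(c|_K)|_J \sim b$ depends only on the class $d$ and not on $K$ or $c$. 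This number is exactly $\binom{d}{a,b}_{\!\prR}$ as defined in the statement. Summing over all $K \subseteq C$ with $c|_K \sim d$ contributes a factor of $\binom{c}{d}_{\!\prR}$, so that
$$
(\pat_a \pat_b)(c) = \sum_{d \in \mathcal{G}(\prR)} \binom{c}{d}_{\!\prR}\binom{d}{a,b}_{\!\prR} = \sum_{d} \binom{d}{a,b}_{\!\prR}\, \pat_d(c).
$$
Since this holds for every $c$, the functional identity \eqref{eq:prodrule} follows, establishing closure; the unit is $\pat_{\varnothing}$, the pattern function of the empty object, which evaluates to $1$ on every $c$.

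The main obstacle is verifying that the count $\binom{d}{a,b}_{\!\prR}$ is genuinely well-defined on isomorphism classes and independent of the chosen representatives $c$, $d$, $a$, $b$. This is where the naturality of the species with restrictions does the real work: one must check that a bijection inducing an isomorphism $c|_K \cong d$ transports the pairs $(I,J)$ of $K$ bijectively onto pairs of $D$, using the naturality axiom \eqref{Axrestr} relating $\prR[\sigma]$ with the restriction maps, so that patterns are preserved under this transport. I would isolate this representative-independence as the crux — it is stated in the excerpt as following from \cite{Penaguiao2020}, so I would either cite that directly or reprove the needed instance by a diagram chase showing that $\prR[\sigma] \circ \text{res} = \text{res} \circ \prR[\sigma]$ on the relevant subsets. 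The remaining finiteness issue — that the sum over $d$ has finitely many nonzero terms — is immediate, since any $d$ with $\binom{d}{a,b}_{\!\prR} \neq 0$ must have ground set of size at most $|I_a| + |J_b|$, bounding the relevant classes.
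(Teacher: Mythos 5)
Your argument is correct and is essentially the standard double-counting proof that the paper itself defers to \cite{Penaguiao2020} for: evaluate pointwise, stratify the pairs $(I,J)$ by $K=I\cup J$ and by the isomorphism class of $c|_K$, and use functoriality/naturality of the restriction maps to see that the inner count depends only on that class, giving $\binom{c}{a}_{\!\prR}\binom{c}{b}_{\!\prR}=\sum_{d}\binom{c}{d}_{\!\prR}\binom{d}{a,b}_{\!\prR}$. The only small caveat is the unit: $\pat_{\varnothing}$ is the unit only when $|\prR[\emptyset]|=1$; in general one should take the (finite) sum of $\pat_e$ over the isomorphism classes $e$ of objects on the empty set.
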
 


Consider now a positive species with restriction $\prR$ endowed with an associative product $(\prR, \sq)$. Examples of associative operations on species with restrictions are the direct sum of permutations $\oplus$, introduced above. Recall by Theorem \eqref{MonRestr2} that 
$(\prR, \sq)$ is equivalent to the linearized cocommutative Hopf monoid $(\trr, \sq, \Delta)$, where $\trr=\mathbb{K}\prR$ and $\Delta$ is defined as in \eqref{CoprodRestr}. If $*$ denotes the pointwise product in $\Kcb(\trr^*)$, then the space of pattern function $\mathcal{A}(\prR)$ is a subalgebra of $\Kcb(\trr^*, *, \Delta_{\sq})$, where $\Delta_{\sq}$ denotes the dual map of $\sq$.

\

Under the natural identification of the function algebra $\mathcal{F}(\mathcal G (\prR), k)^{\otimes 2} $ as a subspace of $\mathcal{F}(\mathcal G (\prR) \times \mathcal G (\prR), k) $, we have
\begin{equation}\label{eq:coproddefin}
 \Delta_{\sq} \pat_a (b \otimes  c) =  \pat_a (b \,\sq\, c) \, .
\end{equation}
This is shown in \cref{thm:conHopfalgebra}. Therefore, we have the following coproduct in the pattern algebra $\mathcal A (\prR)$:
\begin{equation}\label{eq:coprodformula}
\Delta_{\sq} \pat_ a = \sum_{\substack{ b, c \, \in \, \mathcal G (\prR) \\ a = b \, \sq \, c}} \pat_b \otimes \pat_c \, .
\end{equation}
where the sum runs over coinvariants $b, c$ such that $a = b \,\sq \,c$. 
The relation \eqref{eq:coproddefin} is central in establishing that the coproduct $\Delta_{\sq} $ is compatible with the product in $\mathcal A (h)$.

\begin{thm}\label{thm:conHopfalgebra}
Let $(\prR, \sq, 1) $ be an associative species with restrictions.
Then the pattern algebra of $\prR$ together with the coproduct $\Delta_{\sq}$, and a natural choice of counit, forms a bialgebra.
If additionally $| \prR[\emptyset ] | = 1 $, the pattern algebra forms a filtered Hopf algebra.
\end{thm}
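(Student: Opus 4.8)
The plan is to realize $\mathcal{A}(\prR)$ as a sub-bialgebra of the graded dual of the Hopf algebra produced by the bosonic Fock functor, and then upgrade it to a Hopf algebra using connectedness. By \cref{MonRestr2}, the associative species with restrictions $(\prR, \sq)$ linearizes to a cocommutative Hopf monoid $(\trr, \sq, \Delta)$, where $\Delta$ is the restriction coproduct \eqref{CoprodRestr}. Applying $\Kcb$ produces a graded Hopf algebra $\Kcb(\trr)$ whose product comes from $\sq$ and whose coproduct comes from $\Delta$. The pattern functions are linear functionals on $\Kcb(\trr)$, so $\mathcal{A}(\prR) \subseteq \Kcb(\trr)^*$, and under this duality the pointwise product $*$ is transpose to $\Delta$ while the coproduct $\Delta_{\sq}$ is transpose to $\sq$. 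I would first record that $\mathcal{A}(\prR)$ is closed under $*$, which is exactly \eqref{eq:prodrule} and is already available, with unit $\pat_1$ dual to the $\sq$-unit $1 \in \prR[\emptyset]$.

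Next I would equip $\mathcal{A}(\prR)$ with the counit $\varepsilon(f) := f(1)$, evaluation at the $\sq$-unit, and verify the coalgebra axioms. Coassociativity and the counit law follow formally from the associativity and unit axioms of $\sq$ via the defining relation \eqref{eq:coproddefin}, namely $\Delta_{\sq}\pat_a(b \otimes c) = \pat_a(b \sq c)$, since $\Delta_{\sq}$ is by construction the transpose of $\sq$. The essential point is \emph{closure}: I must show $\Delta_{\sq}$ maps $\mathcal{A}(\prR)$ into $\mathcal{A}(\prR) \otimes \mathcal{A}(\prR)$, i.e. establish \eqref{eq:coprodformula}. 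Evaluating both sides against a pair $(b,c)$ reduces this to the combinatorial identity $\pat_a(b \sq c) = \sum_{a = a_1 \sq a_2} \pat_{a_1}(b)\,\pat_{a_2}(c)$. Its proof rests on the compatibility of the restriction maps with $\sq$ — precisely the bimonoid axiom for $\trr$ guaranteed by \cref{MonRestr1} — which yields $(b \sq c)|_K = (b|_{K \cap B}) \sq (c|_{K \cap C})$ for any subset $K$ of the ground set $B \sqcup C$ of $b \sq c$. Summing over the factorization $K = (K \cap B) \sqcup (K \cap C)$ and grouping by the isomorphism types of the two restrictions factorizes the count and gives \eqref{eq:coprodformula}.

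With both structure maps in hand, I would verify the bialgebra compatibility of $*$ and $\Delta_{\sq}$. The cleanest route is to note that $\Kcb(\trr)$ is a bialgebra, hence so is its graded dual, and $\mathcal{A}(\prR)$ is a subspace closed under both operations, therefore a sub-bialgebra; alternatively one checks directly that $\Delta_{\sq}(f * g)$ and $(\Delta_{\sq} f) * (\Delta_{\sq} g)$ agree on pairs $(b,c)$ using \eqref{eq:coproddefin} and the same bimonoid compatibility. For the filtration, assign $\pat_a$ the degree $|I|$ when $a \in \prR[I]$. The coproduct is graded, since in \eqref{eq:coprodformula} the decomposition $a = a_1 \sq a_2$ forces the degrees to add, whereas the product is merely filtered: in \eqref{eq:prodrule} each $c$ lives on a set $C$ with $I \cup J = C$, so $\deg \pat_c \le |I| + |J|$, with equality exactly when the two patterns are disjoint. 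Hence $\mathcal{A}(\prR)$ is a filtered bialgebra.

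Finally, assume $|\prR[\emptyset]| = 1$. Then $\pat_1$ is the only degree-zero pattern function, so the bialgebra is connected, its degree-zero component being the ground field. A connected filtered bialgebra is automatically a Hopf algebra: $\iota \circ \varepsilon - \id$ is locally $\star$-nilpotent, so Takeuchi's formula (\cref{lm:takeuchi}) converges and furnishes the antipode. I expect the main obstacle to be the closure step above — establishing \eqref{eq:coprodformula} — and in particular pinning down the restriction-versus-$\sq$ compatibility $(b \sq c)|_K = (b|_{K \cap B}) \sq (c|_{K \cap C})$ and checking that the sum over $K$ is finite, so that $\Delta_{\sq}$ genuinely lands in the algebraic tensor square $\mathcal{A}(\prR) \otimes \mathcal{A}(\prR)$ rather than a completion; this finiteness is exactly what the grading by underlying-set size secures.
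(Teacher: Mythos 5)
Your proof is correct and follows essentially the same route the paper (via \cite{Penaguiao2020}) takes: realize $\mathcal{A}(\prR)$ inside the dual of $\Kcb(\trr)$, use the relation \eqref{eq:coproddefin} together with the bimonoid compatibility $(b \sq c)|_K = (b|_{K\cap B}) \sq (c|_{K\cap C})$ to establish closure of $\Delta_{\sq}$ (formula \eqref{eq:coprodformula}) and the bialgebra axiom, and then invoke the filtration, connectedness from $|\prR[\emptyset]|=1$, and Takeuchi's formula for the antipode. The only caveat is that the ``graded dual of a bialgebra is a bialgebra'' shortcut requires finite-dimensional graded components, which a general set species need not provide, but your alternative direct verification through \eqref{eq:coproddefin} sidesteps this entirely.
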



Some known Hopf algebras can be constructed as the pattern algebra of a species with restrictions.
An example is $Sym$, the Hopf algebra of \textit{symmetric functions}.
This Hopf algebra has a basis indexed by partitions, and corresponds to the pattern Hopf algebra of the species on set partitions.

\

We end this section with a relevant theorem on functors of species with restrictions.

\begin{thm}[Theorem 3.8. in \cite{Penaguiao2020}]\label{thm:functoriality}
    If $\mathtt{f} : \prR \Rightarrow \mathtt{S}$ is a morphism of associative species with restrictions, between connected species, then $\mathcal A(\mathtt{f})$ is a Hopf algebra morphism that maps $\mathcal A(\mathtt{S}) \to \mathcal A(\mathtt{R})$.
\end{thm}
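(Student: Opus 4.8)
The plan is to realize $\mathcal A(\mathtt f)$ as the pullback of pattern functions along the map that $\mathtt f$ induces on isomorphism classes, to verify the bialgebra axioms directly, and then to obtain compatibility with the antipode for free. First I would record two consequences of $\mathtt f$ being a morphism of species with restrictions, i.e.\ a natural transformation of the contravariant functors $\prR,\mathtt S$. Naturality with respect to bijections shows that $\mathtt f$ sends isomorphic $\prR$-objects to isomorphic $\mathtt S$-objects, so it descends to a map $\mathcal G(\mathtt f)\colon \mathcal G(\prR)\to \mathcal G(\mathtt S)$ which preserves underlying cardinality. Naturality with respect to inclusions shows that $\mathtt f$ commutes with restriction: for $b\in\prR[I]$ and $J\subseteq I$ one has $\mathtt f(b)|_J=\mathtt f(b|_J)$. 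I would then \emph{define} $\mathcal A(\mathtt f)$ on the basis $\{\pat^{\mathtt S}_a\}$ by pullback, $\mathcal A(\mathtt f)(\pat^{\mathtt S}_a)\coloneqq \pat^{\mathtt S}_a\circ \mathcal G(\mathtt f)$, and extend linearly; note the resulting contravariance, which is why the map goes $\mathcal A(\mathtt S)\to\mathcal A(\prR)$.

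The crux, which I expect to be the main obstacle, is to check that this pullback genuinely lands in $\mathcal A(\prR)$. Fix $a\in\mathcal G(\mathtt S)$ and a representative $b\in\prR[I]$ of a class in $\mathcal G(\prR)$. Then
\begin{align*}
\mathcal A(\mathtt f)(\pat^{\mathtt S}_a)(b)
&= \binom{\mathtt f(b)}{a}_{\!\mathtt S}
= \bigl|\{J\subseteq I : \mathtt f(b)|_J \sim a\}\bigr|
= \bigl|\{J\subseteq I : \mathtt f(b|_J)\sim a\}\bigr|.
\end{align*}
Grouping the subsets $J$ by the $\prR$-isomorphism class $c=[\,b|_J\,]$, and using that $b|_J\sim c$ forces $\mathtt f(b|_J)\sim \mathtt f(c)$ so that the condition $\mathtt f(b|_J)\sim a$ holds if and only if $\mathcal G(\mathtt f)(c)=a$, the count factors through the pattern coefficients $\binom{b}{c}_{\!\prR}=\pat^{\prR}_c(b)$, yielding
\begin{equation*}
\mathcal A(\mathtt f)(\pat^{\mathtt S}_a) = \sum_{\substack{c\in\mathcal G(\prR)\\ \mathcal G(\mathtt f)(c)=a}} \pat^{\prR}_c .
\end{equation*}
Because $\mathcal G(\mathtt f)$ preserves cardinality and there are finitely many classes of each size, this is a finite sum, hence a bona fide element of $\mathcal A(\prR)$; this simultaneously establishes well-definedness and pins down the target of the map.

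It remains to identify $\mathcal A(\mathtt f)$ as a Hopf morphism. Since the product of $\mathcal A(\prR)$ is pointwise multiplication of functions on $\mathcal G(\prR)$ and $\mathcal A(\mathtt f)$ is pullback along $\mathcal G(\mathtt f)$, it is automatically an algebra map: pullback along a map of sets sends pointwise products to pointwise products and the unit to the unit. For the coproduct I would invoke that $\mathtt f$ respects the associative operation $\sq$, together with the defining relation \eqref{eq:coproddefin}: for $b,c\in\mathcal G(\prR)$,
\begin{align*}
\Delta_{\sq}\bigl(\mathcal A(\mathtt f)\pat^{\mathtt S}_a\bigr)(b\otimes c)
&=\pat^{\mathtt S}_a(\mathtt f(b\,\sq\,c))
=\pat^{\mathtt S}_a(\mathtt f(b)\,\sq\,\mathtt f(c))\\
&=\bigl((\mathcal A(\mathtt f)\otimes\mathcal A(\mathtt f))\Delta_{\sq}\pat^{\mathtt S}_a\bigr)(b\otimes c),
\end{align*}
so that $\mathcal A(\mathtt f)$ is a coalgebra map as well, with counit compatibility immediate. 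Thus $\mathcal A(\mathtt f)$ is a bialgebra morphism between the connected filtered Hopf algebras furnished by \cref{thm:conHopfalgebra}; since the antipode is the unique convolution inverse of the identity, any bialgebra morphism between Hopf algebras intertwines the antipodes, and therefore $\mathcal A(\mathtt f)$ is a Hopf algebra morphism, as claimed.
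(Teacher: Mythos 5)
The paper states this result only by citation to \cite{Penaguiao2020} and gives no proof of its own, so there is nothing to compare against line by line; your argument is correct and is essentially the standard (and, as far as I can tell, the original) one: define $\mathcal A(\mathtt f)$ as pullback along the induced map $\mathcal G(\mathtt f)$, identify it with the sum $\sum_{c\in\mathcal G(\mathtt f)^{-1}(a)}\pat_c$ via naturality with respect to inclusions and bijections, get the algebra axiom for free from pointwise multiplication, the coalgebra axiom from compatibility with $\sq$ and \eqref{eq:coproddefin}, and antipode compatibility from uniqueness of convolution inverses. The only point worth flagging is that finiteness of the fibres of $\mathcal G(\mathtt f)$ over a fixed class (needed for your sum to land in the linear span $\mathcal A(\prR)$) rests on the assumption, implicit throughout the paper, that each $\prR[n]$ has finitely many isomorphism classes.
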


\section{Examples of species with restrictions \label{sec:species_restrictions}}

\subsection{Species on linear orders.\label{sec:specieslinearorders}}

The first relevant species with restriction to define is the species of total orders $\mathtt{L}$.
This has $\mathtt{L}[I] = \{\text{ total orders in $I$ }\}$, a set with $|I|!$ total orders.
The restriction of an order $\leq \in \mathtt{L}[I]$ to a subset $J$ is the induced order, which is a total order.
This defines a species with restrictions.

\

We also define an associative structure on $\mathtt{L}$.
If $\mathsf{p}$ is a total order in $I$ and $\mathsf{r}$ is a total order in $J$, and if $I, J$ are disjoint, then we can define $\mathsf{p} \ast\mathsf{r}$ a total order on $I \sqcup J$ as:

\begin{itemize}
\item if $a, b \in I$ such that $a \, \mathsf{p} \, b$, then $a \, \mathsf{p} \ast \mathsf{r} \, b$.

\item if $a, b \in J$ such that $a \, \mathsf{r} \, b$, then $a \, \mathsf{p} \ast \mathsf{r} \, b$.

\item if $a \in I$ and $b \in J$, then $a \, \mathsf{p} \ast \mathsf{r} \, b$.
\end{itemize}
This is invariant up to restrictions and relabellings, so this builds a connected associative species with restrictions. 

\

If $\mathsf{p} \in \mathtt{L}[I]$, we write $\mathbb{X}(\mathsf{p}) = I$.
The following fact is an immediate observation.

\begin{prop}\label{prop:linorderideals}
If $\mathbb{X}(\mathsf{p}) = I$, then $I$ is an order ideal in $\mathsf{p} \ast \mathsf{r}$.
\end{prop}

\

\subsection{Species on permutations\label{sec:speciespermutation}}

To fit the framework of species with restrictions, we use a rather unusual definition of permutations introduced in \cite{albert2020two}.
There, a permutation on a set $I$ is seen as a pair of total orders in $I$.
This relates to the usual notion of a permutation a bijection in the following way: if we order the elements of $I = \{a_1 \leq_P \dots \leq_P a_k \} = \{ b_1 \leq_V \dots \leq_V b_k \}$, then this defines a bijection via $a_i \mapsto b_i $ in $I$.
Conversely, for any bijection $f$ on $I$, there are several pairs of orders $(\leq_P, \leq_V)$ that correspond to the bijection $f$, all of which are isomorphic.
If $I \hookrightarrow J$, by restricting the orders on $I$ to orders on $J$ via the injective map $\hookrightarrow $, we obtain a restriction to a permutation on $J$.
The resulting species with restrictions structure is denoted by $\mathtt{Per}$.

\

It will be useful to represent permutations in $I$ as square diagrams labeled by $I$.
This is done in the following way: we place the elements of $I$ in an $|I| \times |I|$ grid so that the elements are placed horizontally according to the $\leq_P$ order, and vertically according to the $\leq_V$ order.
For instance, the permutation $\pi = \{1<_P2<_P3 , 2<_V1<_V3\}$ in $\{1, 2, 3\}$ can be represented as 
\begin{equation}
\begin{array}{|c|c|c|}
	\hline & & 3 \\
    \hline 1 & &  \\
    \hline & 2 & \\
    \hline 
\end{array}\, \, \, .
\end{equation}

In this way, there are $(n!)^2 $ elements in $\mathtt{Per}[n]$.
Up to relabelling, we can represent a permutation as a diagram with one dot in each column and row.
Thus, $\mathcal{G}(\mathtt{Per})$, the set of objects up to isomorphism, has $n!$ isomorphism classes of permutations of size $n$, as expected.

\

\label{defin:per}
If we consider a permutation $\pi$ on a set $I$, that is, a pair $(\leq_P, \leq_V) $ of total orders in $I$, we write $\mathbb{X}(\pi) = I$.
If $f:J \to I $ is an injective map, the preimage of each order $\leq_P, \leq_V$ is well defined and is also a total order in $J$.
This defines the permutation $\mathtt{Per}[f](\pi )$.
If $f$ is the inclusion map, this notion recovers the usual concept of a permutation pattern already present in the literature.

\

The $\ast $ operation on orders can be extended to a \textbf{diagonal sum} operation $\oplus $ on permutations.
This is usually referred as the shifted concatenation of permutations.
This endows $\mathtt{PWo}$ with an  associative species with restrictions structure.

\subsection{Species on packed words}

For packed words, we will mimic the framework produced above for permutations.
First, recall that a linear partial order $\leq$ on a set $I$ is an order inherited by a surjective map $I \to [m]$, called the rank of $\leq$, or $rk_{\leq}$.
We abuse notation and also call the integer $m$ the rank of the order $\leq$.

\begin{figure}[h]
	\centering
	\includegraphics[scale=1]{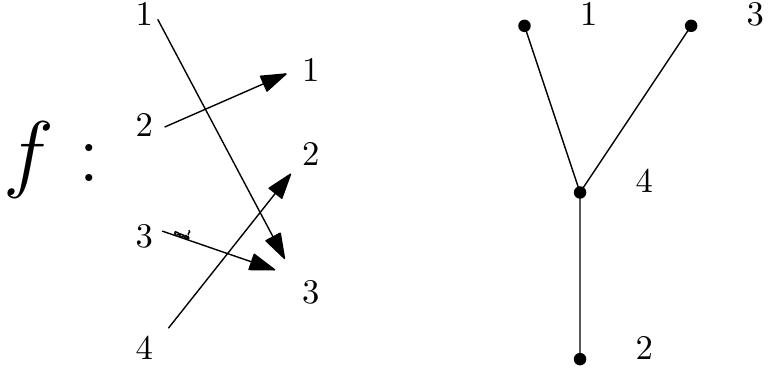}
	\caption{\textbf{Left:} The description of the function $f$. \textbf{Right:} The Hasse diagram of the linear order inherited by $f$. \label{fig:packedWordOrder}}
\end{figure}

For instance, if $f = \{ a\mapsto 3, b\mapsto 1, c\mapsto 3, d\mapsto 2\}$ is a surjective map $\{a, b, c, d\}\to[3]$, the inherited order is $\{b < d < \{a, c \}\}$ and has rank three.
Its Hasse diagram is presented in \cref{fig:packedWordOrder}.
In this way, a packed word $\omega$ on $I$ is a pair of orders $(\leq_P, \leq_V)$ where $\leq_P$ is a total order in $I$, and $\leq_V$ is a linear partial order on $I$.
In particular, note that any permutation on $I$ can be seen as a packed word on $I$, as any total order is a partial linear order.
This relates to the usual notion of packed words as a word in $[m]$ in the following way:
if we order the elements of $I = \{a_1 \leq_P \dots \leq_P a_k \}$, then the corresponding packed word is 
$$rk_{\leq_V}(a_1)rk_{\leq_V}(a_2) \dots rk_{\leq_V}(a_k) \, .$$

Conversely, for any packed word $\omega = p_1\dots p_k$, there are several pairs of orders $(\leq_P, \leq_V)$ that correspond to the word $\omega $, all of which are isomorphic.

\

Consider for instance the packed words $\omega_1 = 13123$ and $\omega_2 = 32413$.
These correspond to packed words on $\{a, b, c, d, e\}$, for instance $\omega_1$ corresponds to $(a > b > c > d > e, \{b, e\} > d > \{a, c\})$ and $\omega_2$ corresponds to $(a > b > c > d > e, c > \{a, e\} >  b > d)$.

If $I \hookrightarrow J$, by restricting the orders on $I$ to orders on $J$, we obtain a restriction to a packed word on $J$.
The resulting species with restrictions structure is denoted by $\mathtt{PWo}$.

\

It will be useful to represent packed words $\omega $ in $I$ as rectangle diagrams labeled by $I$.
This is done in the following way: let $1\leq m \leq |I|$ be the rank of $\omega$, we place the elements of $I$ in an $m \times |I|$ grid so that the elements are placed horizontally according to the $\leq_P$ order, and vertically according to the $\leq_V$ order.
For instance, the packed word $\omega_1 = 13123 = ( d > e > c > a > b, \{b, e\} > \{a\} > \{d, c\})$ in $\{a, b, c, d, e\}$ can be represented as 
\begin{equation}
\begin{array}{|c|c|c|c|c|}
	\hline   & e &   &   & b \\
    \hline   &   &   & a &   \\
    \hline d &   & c &   &   \\
    \hline 
\end{array}\, \, \, .
\end{equation}

In this way, there are $(n!) \times \sum_{m = 1}^n \sFunc(I, [m])$ elements in $\mathtt{PWo}[I]$, where $n = |I|$ and $\sFunc(A, B)$ counts the surjective functions from $A$ to $B$.
Because each packed word $\omega $ has an isomorphism class of size $n!$, there are $ \sum_{m = 1}^n \sFunc(I, [m])$ many packed words, which is expected.

\

\label{defin:pwo}
If we consider a packed word $\omega = (\leq_P, \leq_V) $ on a set $I$, we write $\mathbb{X}(\omega) = I$.
If $f:J \to I $ is an injective map, the preimage of each order $\leq_P, \leq_V$ is well defined.
Furthermore, the preimage of $\leq_P$ is a total order on $J$, whereas the preimage of $\leq_V$ is a linear order on $J$.
Thus, this defines the packed word $\mathtt{PWo}[f](\omega )$.
The $\ast $ operation on orders can be extended to a \textbf{diagonal sum} operation $\oplus $ on packed words.
This is usually referred to the shifted concatenation of packed words.
This endows $\mathtt{PWo}$ with an  associative species with restrictions structure.

\

\subsection{Relation between parking functions and labelled Dyck paths}

Let us first recall the definition of a parking function and of a Dyck path.

\begin{defin}[Parking function]
A parking function $\mathfrak{p} = a_1 \dots a_n$ is a sequence of integers in $[n]$ such that, after reordering $a^{(1)} \leq a^{(2)} \leq \dots \leq a^{(n)}$, we have $a^{(i)} \leq i$ for all $i$.
\end{defin}

Examples of parking functions are $12$, $131$ and $3114$.

\begin{defin}[Dyck path]
Given an $n\times n$ square grid, a \textbf{Dyck path} on this square is an edge path, from the lower left corner to the upper right corner, that is always above the main diagonal.
It is a classical result that the number of Dyck paths of size $n$ is enumerated by Catalan numbers.
\end{defin}

To describe species on parking functions we need to use the construction of parking functions as labelled Dyck paths (see for instance \cite{Loehr}).
This notion of species with restrictions will recover a notion of patterns in parking functions studied in \cite{adeniran2022pattern}.

\

Specifically, if $I$ is a set of size $n$, we label a Dyck path $\mathcal D$ on an $n\times n$ square grid by a function $f$ that assigns unique values on the set $I$ to each of the \textbf{up} segments of the Dyck path.
If we enrich $I$ with a total order $\leq$ in such a way that in each sequence of \textbf{up} segments, the labels arise in \textbf{increasing} order, Bergeron et.al. construct in \cite{BGLPV2021} a parking function $\mathfrak{p} = \mathfrak{p}(I, \mathcal D, f, \leq) $.
We recover here such construction, adapted to the language in this article, for convenience.

\

There are two steps in this construction.
The first is to realize the labelled Dyck path as a weak set composition $\opi$ of $I$ in $|I|$ parts. The second is to translate the weak set composition $\opi$ and the order $\leq $ in $I$ as a parking function.
We will use an example on $I = \{1, 2, 3, 4, 5\}$ to help us highlight the most important details of this construction, presented in \cref{fig:construction_parking}.
There, we have a Dyck path $\mathcal D$ and a corresponding assignment $f$ to each \textbf{up} segment.
We assume the usual integer order on $I$.

\

\textbf{In the first step} we group each of the labels that occur on the $i$-th vertical line in the $i$-th block of $\opi$.
This gives us a weak set composition of $I$ with exactly $|I|$ parts.
Notice that some parts may be empty sets, as it happens in the example given in \cref{fig:construction_parking}.

\textbf{In the second step} we read off the position of each of the labels of $I$, writing down in which part these occur. The resulting sequence is a parking function of size $|I|$ (see, for example, \cite{BGLPV2021}).
In the example in \cref{fig:construction_parking}, for instance, we notice that there are no entries in the second block, so $2$ does not occur in the corresponding parking function.
Because there are three elements in the first block, namely $245$, in the parking function $\mathfrak p $ the character $1$ arises three times, on the second, fourth and fifth position.

\begin{figure}
    \centering
    \includegraphics{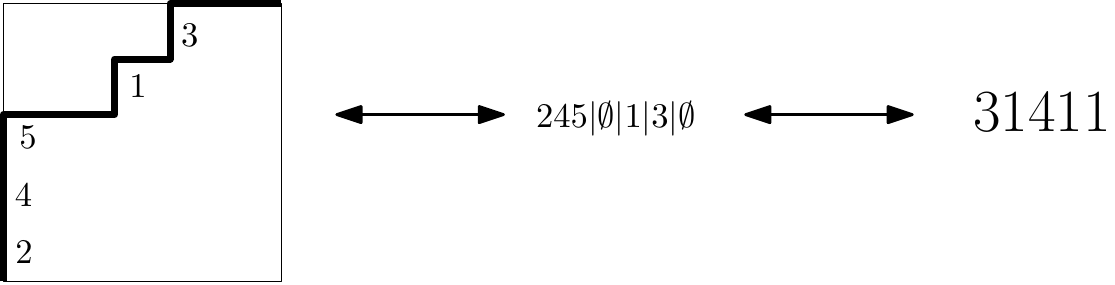}
    \caption{This is an example of a Dyck path labelled by the set $I =\{1, 2, 3, 4, 5\}$. We enrich this set with the usual order on the integers.}
    \label{fig:construction_parking}
\end{figure}

\subsection{Species on parking functions}

This forms the species of \textbf{parking functions}, $\mathtt{PF}$, by letting $\mathtt{PF}[I]$ be the collection of all parking functions $\mathfrak{p} = \mathfrak{p}(I, \mathcal D, f, \leq)$, that is Dyck paths $\mathcal D$ in an $|I|\times |I|$ grid, labelled by elements of $I$ along with an order $\leq$ of $I$.
We can give it a notion of species with restrictions: for each inclusion $\iota : I \hookrightarrow J $ and a labelled Dyck path $(J, \mathcal D, f, \leq)$ on $J$, the restriction $\mathtt{PF}[\iota](J, \mathcal D, f, \leq) = (I, \mathcal D|_I, f|_I, \leq|_I)$ has an intuitive meaning, except perhaps for
$\mathcal D|_I$, which we clarify in the following.
This is done via a notion of \textbf{tunnels}, introduced by Deutsch and Elizalde in \cite{elizalde2003simple}.
Specifically, the corresponding Dyck path is defined to be the restricted Dyck path by taking the \textbf{tunnels} labelled by elements in $I$, relabelling sequences of \textbf{up} segments if necessary to preserve the increasing property.
One can see in \cref{fig:restriction_parking} how this works on the example given above, as well as the corresponding parking function.

\begin{figure}[h]
\centering
    \subfloat[\centering The parking function 31411 and its restriction]{{\includegraphics[height=4.8cm]{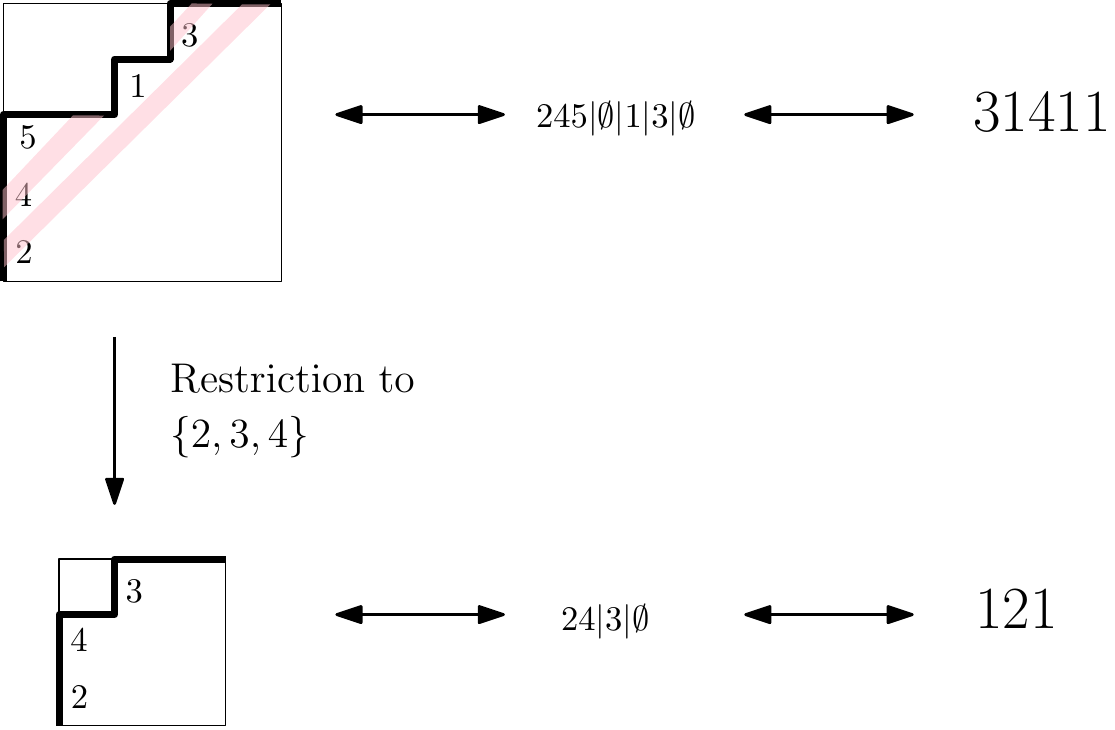} }}%
    \qquad
    \subfloat[\centering The parking function 41511 and its restriction]{{\includegraphics[height=4.8cm]{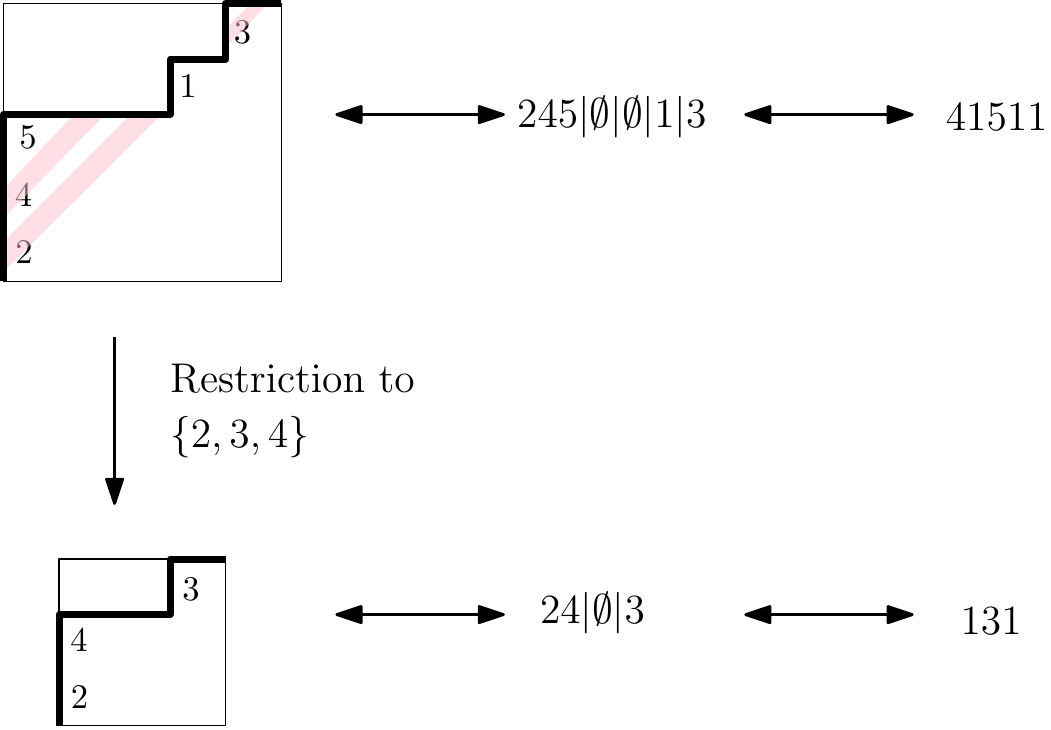}}}%
    \caption{\label{fig:restriction_parking}}%
\end{figure}

\

One defines a shifted shuffle $\oplus$ on parking functions, that results from a diagonal sum of its orders and Dyck paths.
The following claim is self evident and can be established by the same methods presented in \cite{Penaguiao2020}.

\begin{prop}[Species with restrictions on parking functions]
This forms a species with restrictions structure.
Furthermore, the shifted shuffle $\oplus $ endows $\mathtt{PF}$ with a monoid structure, and the resulting Hopf algebra $\mathcal A(\mathtt{PF})$ is free.
\end{prop}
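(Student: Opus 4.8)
The statement bundles three claims, which I would treat in order: that $\mathtt{PF}$ is a species with restrictions, that $\oplus$ makes it a monoid in $(\Spr, \cdot)$, and that $\mathcal A(\mathtt{PF})$ is free.

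First I would check that the tunnel-restriction of the previous subsection is a well-defined contravariant functor. The content is to verify that, for an inclusion $I \hookrightarrow J$ and a labelled Dyck path $(J, \mathcal D, f, \leq)$, deleting the up-segments not labelled by $I$ while keeping the tunnels labelled by $I$ produces a lattice path that still stays above the diagonal, so that $(I, \mathcal D|_I, f|_I, \leq|_I)$ is again a parking function after the canonical relabelling that restores the increasing condition on each run of up-segments. The two functoriality axioms in \eqref{Axrestr} then have to be confirmed: $\text{res}_{I,I} = \id$ is immediate, and the composition axiom $\text{res}_{J,K}\circ\text{res}_{I,J} = \text{res}_{I,K}$ reduces to the assertion that selecting tunnels commutes with passing to a smaller label set. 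This is the crux of the first claim, and it holds because the tunnel matched to a given up-step is determined by the nesting of the path and is unaffected by the deletion of other up-steps; hence deleting in two stages agrees with deleting at once.

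Next I would establish the monoid structure. Associativity of $\oplus$ is inherited from the associativity of the concatenation $\ast$ of orders together with the stacking of Dyck paths, exactly as for $\mathtt{Per}$ and $\mathtt{PWo}$. The substantive point is the compatibility diagram defining a monoid in $(\Spr, \cdot)$, namely that restricting $a \oplus b$ to a subset of $\mathbb{X}(a)\sqcup\mathbb{X}(b)$ factors as $(a|_\bullet)\oplus(b|_\bullet)$. Here I would use the analogue of \cref{prop:linorderideals}: in $a \oplus b$ the ground set of $a$ is an order ideal, and moreover no tunnel of the stacked path crosses between the $a$-block and the $b$-block, since the first path returns to the diagonal before the second begins. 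Consequently the tunnel-restriction distributes over $\oplus$, which is precisely the required compatibility; this is the sense in which the claim is established by the methods of \cite{Penaguiao2020}.

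The freeness of $\mathcal A(\mathtt{PF})$ is the main obstacle. By \cref{thm:conHopfalgebra}, the monoid structure makes $\mathcal A(\mathtt{PF})$ a connected filtered commutative Hopf algebra, so the task is to produce a polynomial generating set. The plan, following the permutation case, is to fix a total order on isomorphism classes of parking functions refining the size $|\mathbb X(\cdot)|$, and to show that each product $\pat_a\pat_b = \sum_c \binom{c}{a,b}_{\mathtt{PF}}\pat_c$ has a unique maximal term $\pat_{a\star b}$ whose coefficient is a unit; the objects that never occur as such a leading term then form a transcendence basis, and the resulting triangularity yields freeness. Over a field of characteristic zero one can instead pass to the associated graded Hopf algebra, which is connected graded commutative, invoke Leray's theorem to conclude it is a polynomial algebra, and lift a homogeneous generating set back through the filtration. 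The delicate step in either route is the leading-term analysis: isolating a canonical maximal quasi-shuffle of two parking functions and checking that its multiplicity is invertible, which is exactly where the tunnel and Dyck-path structure of $\mathtt{PF}$ must be used.
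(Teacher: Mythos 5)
Your treatment of the first two claims is consistent with what the paper intends: the paper dismisses them as ``self evident'' and provable ``by the same methods presented in \cite{Penaguiao2020}'', and the points you isolate --- that tunnel-selection commutes with passing to a smaller label set (which gives the composition axiom in \eqref{Axrestr}), and that no tunnel of the stacked path crosses between the two blocks of $a\oplus b$ (which gives the compatibility of restriction with $\oplus$ required of a monoid in $(\Spr,\cdot)$) --- are exactly the verifications that gloss is hiding. That part is fine.

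The freeness argument is where you diverge from the paper and where the genuine gap lies. The paper does \emph{not} redo a leading-term analysis for parking functions: it observes that $\mathtt{PF}$ is an ordered species, i.e.\ of the form $\mathtt{L}\times\mathtt{R}$ (the total order $\leq$ in the data $(I,\mathcal D,f,\leq)$ supplies the $\mathtt{L}$ factor), hence has the NCF property of \cref{defin:ncf} by the lemma on ordered species in \cref{sec:formula_general}, and then freeness is immediate from \cref{cor:freeNCF}, which imports the general result of \cite{Vargas}. Your route via a canonical maximal quasi-shuffle would eventually work --- it is essentially how the cited general result is proved --- but the step you yourself flag as delicate, namely exhibiting a leading term with unit coefficient using the tunnel structure, is precisely the entire content of the freeness claim, and you leave it unperformed; the NCF observation is the missing idea that lets one avoid carrying out that analysis from scratch for each new species. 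Your alternative route through the associated graded and Leray's theorem has a different defect: it requires characteristic zero, whereas the paper fixes $\mathbb{K}$ of arbitrary characteristic; moreover, if that argument worked unconditionally it would settle the paper's stated conjecture that \emph{every} pattern Hopf algebra is free, which should make you suspicious of relying on it here.
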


For the last part, we observe that because $\mathtt{PF}$ is NCF (see below in \cref{defin:ncf}), we have from \cref{cor:freeNCF} that this algebra is free.

\begin{smpl}
The five smallest parking functions are $\emptyset, 1, 11, 12$ and $21$.
The sixteen parking functions of size three are displayed in \cref{fig:PF3}, along with its corresponding labelled Dyck paths.

For any parking function $\mathfrak p$ of size three, we have $\pat_{\emptyset}(\mathfrak p) = 1$ and $\pat_1(\mathfrak p) = 3$.
The values of $\pat_{11}, \pat_{12}$ and $\pat_{21}$ in pattern functions of size three are represented below in \cref{tab:PF3}.
There, one can also check the relation 
\[\pat_1^2 = \pat_1 + 2(\pat_{11} + \pat_{12} + \pat_{21}),\] 
predicted from \eqref{eq:prodrule}.
\end{smpl}
\begin{table}
\begin{tabular}{ c |  c c c c c c c c c c c c c c c c}
 0  & \small{111} & \small{112} & \small{121} & \small{211} & \small{113} & \small{131} & \small{311} & \small{122} & \small{212} & \small{221} & \small{123} & \small{132} & \small{213} & \small{312} & \small{231} & \small{321}\\ 
 11 & 3 & 2 & 2 & 2 & 1 & 1 & 1 & 1 & 1 & 1 & 0 & 0 & 0 & 0 & 0 & 0 \\  
 12 & 0 & 1 & 0 & 0 & 2 & 1 & 0 & 2 & 1 & 0 & 3 & 2 & 2 & 1 & 1 & 0 \\  
 21 & 0 & 0 & 1 & 1 & 0 & 1 & 2 & 0 & 1 & 2 & 0 & 1 & 1 & 2 & 2 & 3 \end{tabular}
\caption{\label{tab:PF3}Pattern functions evaluated at parking functions of length three.}
\end{table}

\begin{figure}
\centering
\includegraphics[scale=1]{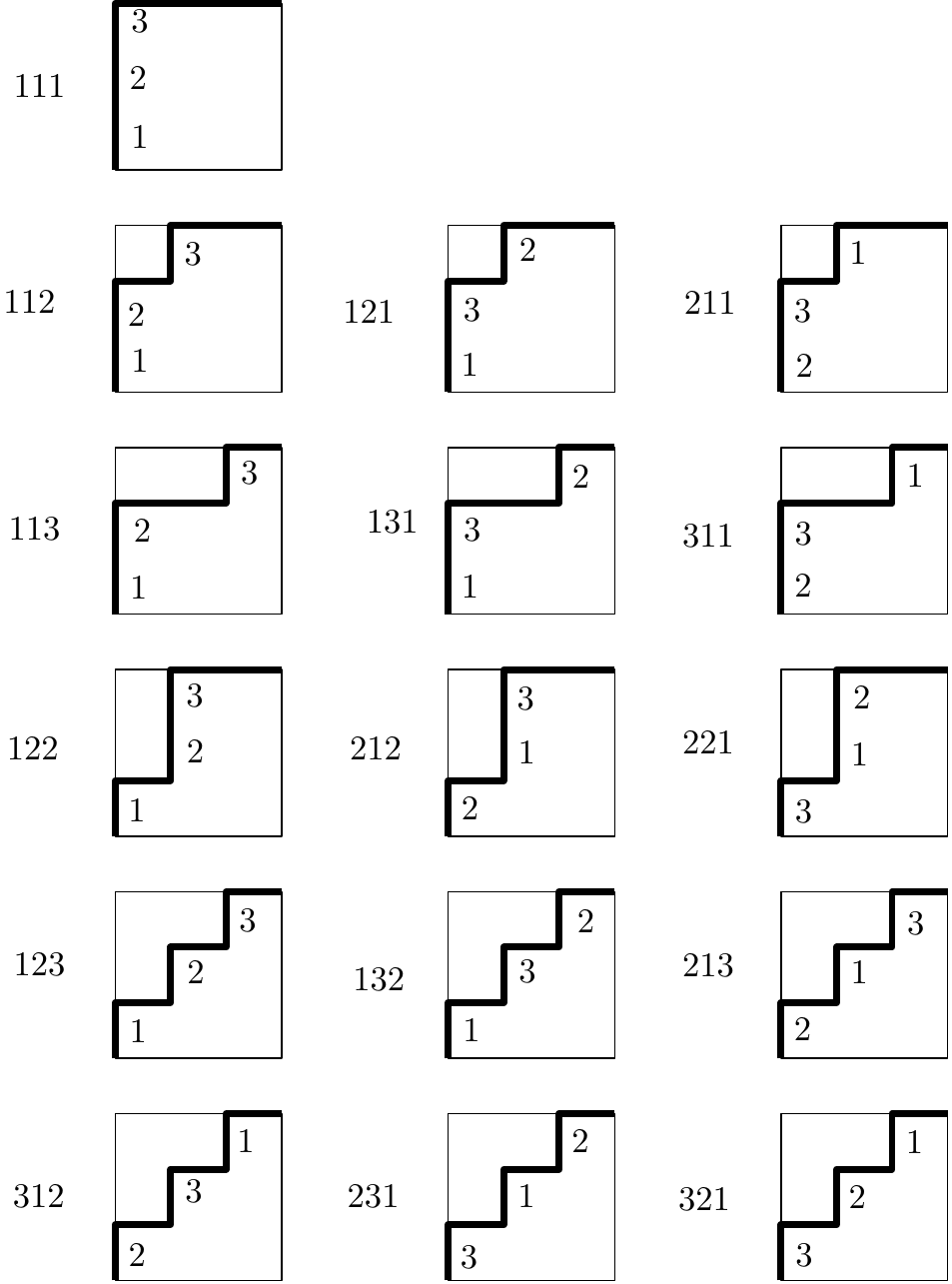}
\caption{\label{fig:PF3}}
\end{figure}

\

\section{The antipode formula for the pattern algebras \label{sec:formula_general}}
\

In this section we give a general formula for the antipode of a pattern algebra, whenever our connected species with restrictions is of the form $\mathtt{L} \times \mathtt{R}$. This antipode formula is a dual analogue of the antipode formula in \cite{BergeronBenedetti}, where a similar formula was obtained for linearized Hopf species.

\

The formula obtained is not cancellation free, but it servers as a starting platform to explore the cancellation free formulas for the cases presented above: permutations, packed words and parking functions. The requirement that our species with restrictions is divisible by $\mathtt{L}$ explains why no cancellation free formulas for other species with restrictions, for instance in marked permutations, introduced in \cite{Penaguiao2020}, were found.

\

We start by recalling Takeuchi's formula, from above in \cref{lm:takeuchi}.
If $H$ is a Hopf algebra such that $(  \iota  \circ\epsilon - \id_H)$ is $\star$-nilpotent, then 
$$S = \sum_{k\geq 0 }  ( \iota  \circ\epsilon- \id_H)^{\star k}\, . $$

Recall that for any pattern Hopf algebra $\mathcal A (\mathtt{h})$, $(\iota\circ \epsilon - \id_H)$ is $\star$-nilpotent. 

\

\subsection{$\mathtt{L}\times -$ species with restrictions}

In \cite[Corollary 3.4.]{Penaguiao2020} it was shown that in species with restrictions $\mathtt{h}$, any factorization into $\ast$-indecomposibles is unique up to order.
In some species with restrictions, we can also drop the ``up to order'' adjective, as there is exactly one factorization into $\ast$-indecomposibles full stop. We precise this in the \textit{non-commuting factorization} definition.
We will be using $\star$ for the operation on $\End (H)$ and we will be using $\ast $ for the associative structure on a species.

\begin{defin}[Non-Commuting Factorization on pattern Hopf algebras]\label{defin:ncf}
A monoidal species with restrictions is called a \textbf{non-commuting factorization} species, or simply an NCF species, if any element $x$ has a unique factorization into $\ast$-indecomposibles elements $x = x_1 \ast \dots \ast x_n$.
\end{defin}

\begin{lm}[Linear species with restrictions have NCF]
Let $\mathtt{R}$ be a connected species with restrictions.
Then $\mathtt{L} \times \mathtt{R}$ has NCF.
\end{lm}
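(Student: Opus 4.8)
\emph{Proof plan.} The plan is to show that the linear-order coordinate rigidly controls all factorizations, so that the admissible ways of splitting $(\mathsf{p},r)\in(\mathtt{L}\times\mathtt{R})[I]$ form a chain rather than a lattice. First I would record the only way the product can interact with a decomposition: if $(\mathsf{p},r)=(\mathsf{p}_1,r_1)\ast(\mathsf{p}_2,r_2)$ over $I=S\sqcup T$, then by \cref{prop:linorderideals} the set $S=\mathbb{X}(\mathsf{p}_1)$ must be an order ideal of $\mathsf{p}$, i.e.\ an initial segment of the total order $\mathsf{p}$. Identifying $I$ with $\{1<\dots<k\}$ through $\mathsf{p}$, every two-block factorization is therefore recorded by a single \emph{cut point} $j\in\{0,\dots,k\}$ with $S=\{1,\dots,j\}$, and the monoid-with-restriction axioms force $r_1=r|_{\{1,\dots,j\}}$ and $r_2=r|_{\{j+1,\dots,k\}}$. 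I call $j$ \textbf{admissible} when in addition $r=r|_{\{1,\dots,j\}}\ast r|_{\{j+1,\dots,k\}}$, and I let $C\subseteq\{0,\dots,k\}$ be the set of admissible cut points, which always contains $0$ and $k$. Thus the admissible cut points form a subset of a chain, and this is the entire source of rigidity.

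Next I would isolate the one structural tool needed: restriction distributes over the product in any monoid in species with restrictions, so that for $x\in\mathtt{R}[S]$, $y\in\mathtt{R}[T]$ and $U\subseteq S\sqcup T$ one has $(x\ast y)|_U=x|_{S\cap U}\ast y|_{T\cap U}$. Using this together with the contravariance $\text{res}_{J,K}\circ\text{res}_{I,J}=\text{res}_{I,K}$, I would prove the transfer statement that makes everything fit: if $j\in C$ and $a<j<b$ for indices $a,b$, then restricting the identity $r=r|_{\{1,\dots,j\}}\ast r|_{\{j+1,\dots,k\}}$ to the interval $\{a+1,\dots,b\}$ exhibits a nontrivial factorization of $(\mathsf{p},r)|_{\{a+1,\dots,b\}}$. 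Conversely, an admissible cut of such an interval restriction lifts back to an admissible cut of $(\mathsf{p},r)$ lying in that interval. In short, the admissible cuts of an interval factor are exactly the elements of $C$ strictly inside it.

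With this in hand, existence of a factorization into $\ast$-indecomposables is the usual induction on $|I|$: either $(\mathsf{p},r)$ is indecomposable, or it splits at some admissible cut into two strictly smaller pieces. For uniqueness I would take any factorization $(\mathsf{p},r)=y_1\ast\dots\ast y_l$ into indecomposables. Reading off partial products shows that its cut points form a chain $0=d_0<\dots<d_l=k$ lying in $C$, and distributivity gives $y_i=(\mathsf{p},r)|_{\{d_{i-1}+1,\dots,d_i\}}$. By the transfer statement, the indecomposability of each $y_i$ forbids any element of $C$ strictly between $d_{i-1}$ and $d_i$; hence the $d_i$ are precisely the consecutive elements of $C$. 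Therefore every factorization into indecomposables has the same cut points and the same factors, which is the desired uniqueness. Connectedness of $\mathtt{R}$ enters only to guarantee that the empty object is unique, so that all indecomposable factors are genuinely nonempty and no trivial factors can be inserted.

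I expect the main obstacle to be the bookkeeping in the transfer statement: translating an admissible cut of the whole object into an admissible cut of an interval restriction, and back, requires careful and repeated use of the distributivity of restriction over $\ast$ and of the composition law for restrictions, since the intersections $\{1,\dots,j\}\cap\{d_{i-1}+1,\dots,d_i\}$ must be tracked exactly. Everything else is forced once \cref{prop:linorderideals} collapses the poset of admissible decompositions onto a single chain.
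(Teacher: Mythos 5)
Your argument is correct, but it takes a genuinely different route from the paper's. The paper fixes two isomorphic factored elements $x = x_1 \ast \dots \ast x_k \sim y = y_1 \ast \dots \ast y_n$, imports from \cite{Penaguiao2020} (Corollary 3.4) the fact that $k=n$ and that the multisets of factors agree, and then argues by induction that the isomorphism $\phi$ must carry the ground set of the first factor onto the ground set of the first factor: the key step is that $J_1 \subsetneq \phi(I_1)$ would produce, via compatibility of product and restriction, a nontrivial factorization of the indecomposable $x_1$. You instead work with a single fixed element and never invoke the ``unique up to order'' result: \cref{prop:linorderideals} forces every decomposition to cut the total order into initial segments, factorizations become chains of admissible cut points, and the factorization into indecomposables is the one using the full set $C$ of admissible cuts (note that for uniqueness you only need the forward half of your transfer statement --- an element of $C$ strictly inside an interval factor splits that factor --- so the bookkeeping you worry about is lighter than you fear). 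Your route is more self-contained, gives the stronger element-level uniqueness, and makes explicit the chain structure that the paper only exploits later through the bijection $\mathbf{CS}$ between compositions and subsets of $[n-1]$. Two small points to add for completeness: first, NCF as used downstream is applied to isomorphism classes (i.e.\ $x\sim y$ forces $x_i \sim y_i$ in order), which follows from your element-level statement by transporting a factorization of $y$ along the isomorphism via the naturality axiom of $\mu$; second, recovering the factors as restrictions $y_i=(\mathsf{p},r)|_{\{d_{i-1}+1,\dots,d_i\}}$ uses connectedness not only to exclude trivial factors but also to identify $y_j|_\emptyset$ with the unit when you apply the compatibility axiom. Both proofs share the same implicit hypothesis that the monoid structure on $\mathtt{L}\times\mathtt{R}$ acts as $\ast$ on the $\mathtt{L}$-coordinate, which is what makes \cref{prop:linorderideals} applicable.
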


In the following, we will refer to associative and connected species of the form $\mathtt{L} \times \mathtt{R}$ as \textbf{ordered species with restrictions}.

\begin{proof}
Let $x = x_1 \ast \dots \ast x_k $ and $ y =  y_1 \ast \dots \ast y_n$ such that $x \sim y$.
From \cite[Corollary 3.4.]{Penaguiao2020}, we know that $k = n$ and the multisets $\{x_i\}_{i=1}^k,  \{y_i\}_{i=1}^n$ are the same.
It remains to show that the factorizations order coincide and that we have $x_i \sim y_i$ for $i = 1, \dots , n$.
To that effect we act by induction, where $n = 1$ is trivial.

\

We write $x_i = (l_i, p_i)\in (\mathtt{L}\times \mathtt{R})[I_i]$ and $y_i = (m_i, q_i)\in (\mathtt{L}\times \mathtt{R})[J_i]$.
Write $x = (l, p) \in (\mathtt{L}\times \mathtt{R})[I] $ and $y = (m, q) \in (\mathtt{L}\times \mathtt{R})[J]$.
Assume wlog that $|I_1 | \geq |J_1|$.

\

By hypothesis, we have that $x \sim y$, so there exists some bijection $\phi: J \to I$ such that $ (\mathtt{L}\times \mathtt{R})[\phi](x) = y$.
This bijection yields a correspondence between two linear orders $\mathtt{L} [\phi] (l_1 \ast \dots \ast l_n) = m_1\ast \dots \ast m_n $.
Observe that $I_1$ and $J_1$ are ideals in $l_1 \ast \dots \ast l_n$ and $m_1 \ast \dots \ast m_n$, respectively, as seen in \cref{prop:linorderideals}.
So either $J_1 = \phi (I_1)$ or $J_1 \subsetneq  \phi(I_1) $.
Assume for sake of contradiction that $J_1 \subsetneq  \phi(I_1)$, and consider the following factorization of $x_1 $:
\begin{equation}\label{eq:linearpats:non_trivial_fact}
x_1 = x|_{I_1} \sim y|_{\phi(I_1)} = y_1|_{\phi(I_1) \cap J_1} \ast (y_2 \ast \dots \ast y_n)|_{(J\setminus J_1) \cap \phi(I_1)}\, . 
\end{equation}

From $|I_1| > |J_1|$ and $\phi $ is a bijection, we have that $| (J\setminus J_1) | + | \phi(I_1) | = |J| - |J_1| + |I_1| > |J|$, so the intersection $(J\setminus J_1) \cap \phi(I_1)$ is non-empty.
On the other hand, $\phi(I_1) \cap J_1 = J_1$ is also non-empty.

\

Going back to \eqref{eq:linearpats:non_trivial_fact}, we get a non-trivial factorization of $x_1$, which contradicts the fact that we started with a factorization into indecomposables. Thus $|I_1 | = |J_1|$, which shows that $\phi(I_1) = J_1$.
Therefore,  $\phi(I\setminus I_1) = J \setminus  J_1$, $ \mathtt{R}[\phi] (p|_{I_1}) = q|_{J_1} $ and $ \mathtt{R}[\phi] (p|_{I \setminus I_1}) = q|_{J \setminus J_1} $.
We get that $x_1 \sim y_1$ and $x_2 \ast \cdots \ast x_n \sim y_2 \ast \cdots \ast y_n$, via $\phi$.
By induction hypothesis, this tells us that $x_2 \sim y_2, \cdots , x_n \sim y_n$, as desired.
\end{proof}

\begin{cor}\label{cor:freeNCF}
If $\mathtt{R}$ is an ordered species with restriction, then $\mathcal A(\mathtt{R})$ is free.
\end{cor}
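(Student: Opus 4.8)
The plan is to combine the preceding lemma with the coproduct description in \eqref{eq:coprodformula} to recognize $\mathcal{A}(\mathtt{R})$ as a commutative Hopf algebra whose coalgebra is a deconcatenation coalgebra, and then to run a Radford-type triangularity argument producing an explicit polynomial generating set. First I would record the combinatorial consequence of the NCF property from \cref{defin:ncf}: unique factorization into $\ast$-indecomposables means precisely that $(\mathcal{G}(\mathtt{R}), \ast)$ is the free monoid on the set $\mathcal{I}$ of $\ast$-indecomposable isomorphism classes, so that the basis $\{\pat_c\}_{c \in \mathcal{G}(\mathtt{R})}$ of $\mathcal{A}(\mathtt{R})$ is indexed by words in the alphabet $\mathcal{I}$. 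Under this identification the factorizations $c = b \ast c'$ appearing in \eqref{eq:coprodformula} are exactly the splittings of the word $c$ into a prefix and a suffix, so the coproduct is literally the deconcatenation coproduct on $\mathcal{I}$-words. Thus $\mathcal{A}(\mathtt{R})$ is a commutative Hopf algebra whose underlying coalgebra is the deconcatenation (tensor) coalgebra on $\mathbb{K}\mathcal{I}$, graded by the size $|c|$.

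Next I would fix a total order on $\mathcal{I}$ refining the size grading and consider the Lyndon words of the free monoid $\mathcal{I}^{\ast}$, together with the Chen--Fox--Lyndon factorization: every word $c$ has a unique expression $c = \ell_1 \cdots \ell_m$ as a non-increasing concatenation of Lyndon words. I would propose that $\mathcal{A}(\mathtt{R})$ is freely generated, as a commutative algebra, by the pattern functions $\{\pat_\ell : \ell \in \mathcal{I}^{\ast} \text{ Lyndon}\}$; the count of such generators in each size is consistent with the Hilbert series of $\mathcal{A}(\mathtt{R})$, which by unique factorization equals $(1 - \sum_n i_n x^n)^{-1}$ with $i_n$ the number of indecomposables of size $n$. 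To prove this, the heart of the argument is a unitriangularity statement for the product \eqref{eq:prodrule}: for Lyndon words $\ell_1 \ge \cdots \ge \ell_m$,
\[
\pat_{\ell_1} \cdots \pat_{\ell_m} = \kappa\, \pat_{\ell_1 \ast \cdots \ast \ell_m} + (\text{strictly lower terms}),
\]
where $\kappa \neq 0$ and ``lower'' is measured by the order that ranks terms first by $|c|$, then by the number of $\ast$-factors, then lexicographically on the factorization word. Via the Chen--Fox--Lyndon bijection this expresses the change of basis between the monomial family $\{\prod_i \pat_{\ell_i}\}$ and the basis $\{\pat_c\}$ as a unitriangular matrix, which immediately yields both algebraic independence and spanning, hence freeness.

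To establish the triangularity I would split the product \eqref{eq:prodrule} according to the size of $c$. The top-size contributions come from the disjoint covers (the genuine shuffles), whose words all have the maximal number $\sum_i |\ell_i|$ of $\ast$-factors; among these a classical Radford-type shuffle computation isolates the concatenation $\ell_1 \cdots \ell_m$ as the unique lexicographically maximal word and pins down $\kappa$. Every remaining contribution is strictly lower: the overlapping covers strictly decrease $|c|$, while the shuffle terms that merge interleaved blocks into a single larger indecomposable strictly decrease the number of $\ast$-factors. Since the coalgebra is exactly deconcatenation and the size filtration is respected by the product, it then suffices to run this analysis on the size-associated graded algebra and lift a homogeneous free generating set through the filtration.

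The main obstacle is precisely this triangularity lemma, and it is where $\mathcal{A}(\mathtt{R})$ departs from an ordinary shuffle algebra: one must control the ``merge'' covers that are absent from the classical word shuffle and verify that concatenation survives as the unique maximal shuffle term, which is the step that genuinely uses how the restriction maps of $\mathtt{R}$ interact with $\ast$, as encoded in the preceding lemma and in \cref{prop:linorderideals}. A secondary subtlety is the coefficient $\kappa$: it is a product of factorials of the multiplicities of the repeated Lyndon factors, so the clean Lyndon generating set is unitriangular only when these factorials are invertible; in positive characteristic one must either restrict to characteristic zero or replace the Lyndon generators by a Hazewinkel-type integral generating set to retain freeness.
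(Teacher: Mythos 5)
Your route and the paper's diverge at the outset: the paper's entire proof of this corollary is a one\-/line citation --- the preceding lemma shows that an ordered species $\mathtt{L}\times\mathtt{R}'$ has the NCF property of \cref{defin:ncf}, and freeness is then read off from the freeness theorem of \cite{Vargas}, whose hypothesis is exactly unique factorization into $\ast$-indecomposables. What you have written is therefore not a proof of the corollary from the cited result but a reconstruction of the cited result itself. Your outline does match the known strategy: NCF makes $(\mathcal{G}(\mathtt{R}),\ast)$ a free monoid on the indecomposables, \eqref{eq:coprodformula} becomes deconcatenation of words, and freeness follows from a triangular change of basis between $\{\pat_c\}$ and the monomials in the $\pat_\ell$ for $\ell$ Lyndon, ordered by size, then number of $\ast$-factors, then lexicographically. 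This is a legitimate and more self-contained route, at the cost of redoing the work the paper outsources.

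As written, however, the argument is incomplete precisely at the point you yourself flag: the triangularity lemma is asserted and motivated, not proven, and it carries all of the mathematical content. Two things must be supplied. First, among the top-size, maximal-factor-number terms of $\pat_{\ell_1}\cdots\pat_{\ell_m}$, you need to show that any disjoint cover producing a $c$ with $\sum_i|\ell_i|$ many $\ast$-factors necessarily respects the block decomposition of $c$; only then does the set of surviving words reduce to the word shuffles of $\ell_1,\dots,\ell_m$ and the Radford computation apply. This is where \cref{prop:linorderideals} and the argument of \cref{obs:pwords-characterisation} must actually be invoked --- covers of \eqref{eq:prodrule} can in general split indecomposable blocks (already $\pat_1\pat_1=\pat_1+2\pat_{12}+2\pat_{21}$ produces the indecomposable $21$ in top size), and your second-tier ordering handles those terms only after this reduction is justified. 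Second, your leading coefficient $\kappa$ is a product of multiplicity factorials, so the Lyndon monomials form a basis only when these are invertible; you would need to fix characteristic zero (as the cited freeness result effectively does) or produce an integral generating set. Neither point looks fatal, but until they are written out the proposal restates the theorem it needs rather than proving it.
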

This follows from \cite{Vargas}, after we recognize that $\mathtt{L} \times \mathtt{R}'$ has NCF from above.

\

The pattern Hopf algebra on permutations, on packed words and on parking functions all satisfy the NCF property.
In fact, permutations, packed words and parking functions are of the form $\mathtt{L} \times \mathtt{R}$.
This property allows for an easy manipulation of the coproduct, and results in a more tractable approach to the antipode formula.

\begin{defin}[Composition poset and cumulative sum]
Recall that we write $\mathcal C_n$ for the set of compositions of size $n$.
We will define $\mathbf{CS}$, a bijection between $\mathcal C_n $ and $2^{[n-1]}$, as follows.
If $\alpha =(\alpha_1, \dots, \alpha_{\ell} ) \in \mathcal C_n$, define $f_i = \sum_{j=1}^i \alpha_j$ and 
\begin{equation}
    \mathbf{CS}(\alpha) \coloneqq \{f_i\, | \, i = 1, \dots, \ell - 1\} \, .
\end{equation}
This bijection allows us to define an order $\leq $ in $\mathcal C_n$, via the pullback from the boolean poset in $2^{[n-1]}$.
This order can be defined as well as follows: we say that $\alpha \leq \beta$ if $\alpha$ arises from $\beta $ after merging and adding consecutive entries.
\end{defin}

\
Recall that a QSS $\III = (I_1, \dots , I_n)$ of $y\in \mathtt{R}[I]$ from $x_1, \dots, x_n$ satisfies $y|_{I_i} = x_i$ for all $i = 1, \dots , n$, and $I = \bigcup_i I_i$.

\begin{defin}[Compositions and QSS]
Consider again $\mathtt{R}$ an ordered species with restrictions.
Let $x \in \mathtt{R}[J], y \in  \mathtt{R}[I]$, and $x = x_1\ast \dots \ast x_n$ be the unique factorization of $x$ into indecomposables.
Further say that $y = (\leq_y, \iota)$, where $\leq_y$ is a linear order in $I$.
Let $\III = (I_1, \dots , I_n)$ be a QSS of $y$ from $x_1, \dots, x_n$ and consider a composition $\alpha \models n$.

\

Suppose that $ \mathbf{CS} (\alpha) = \{f_1, \dots, f_{\ell(\alpha) - 1} \}$ and use the convention that $f_0 = 0$ and $f_{\ell(\alpha)} = n$. 
Then we define for $i = 1, \dots, \ell(\alpha)$:
\[I^{\alpha}_i := I_{f_{i-1} + 1} \cup \dots \cup I_{f_i} \quad , \quad x^{\alpha}_i := x_{f_{i-1} + 1} \ast \dots \ast x_{f_i}.\]

\

For a partial order $\leq$ on a set $I$, and two sets $A, B \subseteq I$, we say that $A \lneq B$ if $A, B$ are disjoint and $a \leq b$ for any $a \in A$ and $b \in B$.

\

Two indices $i< j$ are said to be \textbf{merged} by $\alpha$ if there is some $k$ in $\{1, \dots, n\}$ such that $ f_{k-1} < i < j \leq f_k$.
We say that a QSS $\III$ of $y$ from $x_1, \dots , x_n$ is $\alpha $-stable if $(I^{\alpha}_i)_i \text{ is a QSS of $y$ from } (x^{\alpha}_i)_i$ and, whenever $x_i \sim x_j$ and $i < j$ are merged with $\alpha $, then $I_i \lneq_y I_j$.

\

Finally, we define 
\begin{equation}
   \mathcal I^{x, y}_{\III} \coloneqq \left\{ \alpha \models n \,\Big| \,\III \text{ is an $\alpha$-stable QSS of $y$ from } (x_i)_i \right\} \, . 
\end{equation}
\end{defin}

\

\begin{smpl}[$\alpha$-stable QSS on $\mathtt{PWo}$]
Consider the packed word $\rho = 21 \oplus 111 = 21333 = (1 < 2 < 3 < 4 < 5, 2 < 1 < \{3, 4, 5\})$ on the set $[5]$.
This packed word has three QSS from $21, 1, 11$, precisely $\III_1 =(12, 3, 45)$, $\III_2 =(12, 4, 35)$ and $\III_3 =(12, 5, 34)$.
All three are $(1, 1, 1)$-stable.

\

We now observe that $\III_1, \III_2$ and $\III_3$ are $(2, 1)$-stable, but neither $(1, 2)$-stable nor $(3)$-stable.
Indeed, $\III_1^{(1, 2)} = \III_2^{(1, 2)} = \III_3^{(1, 2)} = (12, 345)$, and $\rho|_{345} = 111$ is not $\rho|_{I_2} \oplus \rho|_{I_3}$ for any of the QSS.
On the other hand, $\III_1^{(2, 1)} = (123, 45)$, $\III_2^{(2, 1)} = (124, 35)$ and $\III_3^{(2, 1)} = (125, 34)$, in each case is easy to note that $\rho|_{I_1^{(2, 1)}} = 21\oplus 1$ and $\rho|_{I_2^{(2, 1)}} = 11$ for each of the QSS.

Therefore, in each case we get
$$\mathcal I_{\III}^{21\oplus 1\oplus 11, 21333} = \{(1, 1, 1), (2, 1)\}\, . $$
\end{smpl}

The following example portrays the significance of additional requirement that $I_i \lneq_y I_j$ whenever $x_i \sim x_j$ and $i, j$ are merged by $\alpha$, in the context of packed words.

\begin{smpl}[$\alpha$-stable on $\mathtt{PWo}$, with $x_i \sim x_j$]
Let us consider now the packed word $\rho = 2133 = (1 \leq_P 2 \leq_P 3 \leq_P 4, 2 \leq_V 1 \leq_V \{3, 4\} )$, we will be considering QSS of $\rho$ from $\omega_1 = 12, \omega_2 = 1, \omega_3 = 1$.
Namely, $\III_1 = (13, 2, 4)$ and $\III_2 = (13, 4, 2)$.
We have that $\rho|_{I_2 \cup I_3} = \rho|_{I_2} \oplus \rho|_{I_3} = 1 \oplus 1$ in either of the cases.
However, note that $\omega_2 \sim \omega_3$, and the composition $(1, 2)$ merges $2$ and $3$, so $(1, 2)$-stability requires $I_2 \lneq_P I_3$.
Indeed, because $2 \leq_P 4$, we have that $\III_1$ is $(1, 2)$-stable, whereas $\III_2$ is not $(1, 2)$-stable.
\end{smpl}

Define the composition $\mu_i \coloneqq (\underbrace{1, \dots , 1}_\text{$i-1$ times}, 2, 1, \dots, 1)$.
If $\III = (I_1, \dots, I_n)$  is a $\mu_i$-stable QSS, then $y|_{I_1 \cup I_{i+1}} = y|_{I_i} \ast y|_{I_{i+1}}$.
This motivates the following lemma:

\begin{lm}\label{obs:pwords-characterisation}
Let $\mathtt{R}$ be an ordered species with restriction.
Consider $x, y$ objects in $\mathtt{R}$, such that $y = (\leq_y, m)$ and $x = x_1\ast \dots \ast x_n$ a factorization into indecomposibles.
If $\III = (I_1, \dots, I_n)$ is a QSS of $y$ from $x_1, \dots x_n$ that is $\mu_i$-stable, then $I_i \lneq_y I_{i+1}$.
\end{lm}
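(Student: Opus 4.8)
The plan is to reduce the statement to a claim about the single merged block $I_i\cup I_{i+1}$, and then to split according to whether or not $x_i\sim x_{i+1}$. Since $\mu_i$ places exactly $i$ and $i+1$ in a common block, we have $x^{\mu_i}_i=x_i\ast x_{i+1}$, so the first clause of $\mu_i$-stability asserts that the tuple $(I_1,\dots,I_{i-1},\,I_i\cup I_{i+1},\,I_{i+2},\dots,I_n)$ is a QSS of $y$ from $(x_1,\dots,x_i\ast x_{i+1},\dots,x_n)$; in particular $w:=y|_{I_i\cup I_{i+1}}\sim x_i\ast x_{i+1}$, while $\III$ being a QSS already gives $w|_{I_i}\sim x_i$ and $w|_{I_{i+1}}\sim x_{i+1}$. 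Comparing cardinalities, $|I_i\cup I_{i+1}|=|x_i|+|x_{i+1}|=|I_i|+|I_{i+1}|$, so $I_i$ and $I_{i+1}$ are disjoint and partition the ground set of $w$. I would also record that $i<i+1$ are \textbf{merged} by $\mu_i$ (take $k=i$, since $f_{i-1}=i-1<i<i+1\leq f_i=i+1$), so the second clause of $\mu_i$-stability is available whenever $x_i\sim x_{i+1}$.

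If $x_i\sim x_{i+1}$, nothing more is needed: the second clause of $\mu_i$-stability, applied to the merged pair $i<i+1$, is exactly the assertion $I_i\lneq_y I_{i+1}$.

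The substantive case is $x_i\not\sim x_{i+1}$, where only the isomorphism $w\sim x_i\ast x_{i+1}$ is available. Transporting the factorization $x_i\ast x_{i+1}$ along a relabelling $w\cong x_i\ast x_{i+1}$ and invoking naturality of $\ast$, I would write $w=w|_P\ast w|_Q$, where $P$ is the image of the ground set of $x_i$; then $w|_P\sim x_i$, $w|_Q\sim x_{i+1}$, and by \cref{prop:linorderideals} applied to the $\mathtt{L}$-component, $P$ is an order ideal of $\leq_w$, i.e.\ $P\lneq_w Q$. It remains to identify the QSS sets with this canonical splitting, that is, to show $I_i=P$. The key tool is the monoid-with-restrictions compatibility axiom, which makes restriction distribute over $\ast$: applied to $w=w|_P\ast w|_Q$ and the restriction set $I_i$, it yields $w|_{I_i}=w|_{P\cap I_i}\ast w|_{Q\cap I_i}$. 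Since $w|_{I_i}\sim x_i$ is indecomposable, one of the two factors must be empty, so $I_i\subseteq P$ or $I_i\subseteq Q$; the same argument for $I_{i+1}$ gives $I_{i+1}\subseteq P$ or $I_{i+1}\subseteq Q$. As $I_i,I_{i+1}$ partition $P\sqcup Q$ with $P,Q$ both nonempty, the only options are $(I_i,I_{i+1})=(P,Q)$ or $(I_i,I_{i+1})=(Q,P)$. The swapped option would force $w|_{I_i}=w|_Q\sim x_{i+1}$, hence $x_i\sim x_{i+1}$, contradicting this case. Therefore $I_i=P$ is an order ideal of $\leq_w$ and $I_i\lneq_y I_{i+1}$.

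The step I expect to be the crux is precisely the passage from the isomorphism $w\sim x_i\ast x_{i+1}$ to the equality $I_i=P$ of sets: the first clause of $\mu_i$-stability controls $w$ only up to isomorphism, and it is the interplay between indecomposability of $x_i,x_{i+1}$ and the restriction-over-$\ast$ axiom that pins down the actual partition. It is instructive that this argument fails exactly when $x_i\sim x_{i+1}$ — both orderings of $P,Q$ are then isomorphism-consistent — which is the structural reason that the definition of $\mu_i$-stability carries the extra order requirement in its second clause.
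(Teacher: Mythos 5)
Your proof is correct and takes essentially the same approach as the paper's: both split on whether $x_i\sim x_{i+1}$, use the restriction-over-$\ast$ compatibility together with indecomposability of $x_i$ to pin $I_i$ to one block of the transported factorization, rule out the swapped identification via $x_i\not\sim x_{i+1}$, and conclude with \cref{prop:linorderideals}. The only difference is cosmetic: you argue on the $y$-side with a partition argument for both $I_i$ and $I_{i+1}$, whereas the paper pulls back to the $x$-side and uses a WLOG cardinality comparison.
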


We will observe latter that in the case of packed words, a stronger claim can be used to compute $\alpha$-stability.
See \cref{lm:QSSpackedWords}.

\begin{proof}
If $x_i \sim x_{i+1}$, the $\mu_i$ stability implies that $I_i \lneq_y I_{i+1}$, because $\mu_i$ merges $i$ and $i+1$.
We can now assume that $x_i \not\sim x_{i+1}$.
Let $X_j = \mathbb{X}(x_j)$ for $j = i, i+1$.
Note that because $y|_{I_j} \sim x_j$, we have that $|X_j| = |I_j|$.
The stability condition further gives us that $y|_{I_i \cup I_{i+1}} \sim x_i \ast x_{i+1}$.
Let $\phi : X_i \sqcup X_{i+1} \to  I_i \cup I_{i+1}$ be bijection such that $\mathtt{R}[\phi](y|_{ I_i \cup I_{i+1}}) = x_i \ast x_{i+1}$.

\

Because $\phi$ is a bijection, and $|X_j| = |I_j|$, this means that $I_i, I_{i+1}$ are disjoint.
We will consider the case that $|I_i| \geq |I_{i+1}|$ here.
The proof on the $|I_i| \leq |I_{i+1}|$ case can be done in a similar way.

\

Let $\inc $ be the injection $I_i \to I_i \cup I_{i+1}$.
Observe that
\begin{equation}\label{eq:lmonPWo}
    \begin{split}
        \mathtt{h}[\phi \circ \inc](y|_{I_i \cup I_{i+1}}) =& (x_i \ast x_{i+1})|_{\phi^{-1}(I_i)}\\
        \mathtt{h}[\phi](y|_{I_i}) =& (x_i)|_{X_i \cap \phi^{-1}(I_i)} \ast (x_{i+1})|_{X_{i+1} \cap \phi^{-1}(I_i)}\, .
    \end{split}
\end{equation}

However, $\mathtt{R}[\phi](y|_{I_i}) \sim y|_{I_i} \sim x_i$, which is $\ast$-indecomposible, we conclude that either $|X_i \cap \phi^{-1}(I_i)| = 0 $ or $|X_{i+1} \cap \phi^{-1}(I_i)| = 0$.
Assume the first for sake of contradiction, and because $\phi^{-1}(I_i) \subseteq X_i \cap X_{i+1}$, we have that $\phi^{-1}(I_i) \subseteq X_{i+1}$.
But $|\phi^{-1}(I_i)| = |I_i| \geq |I_{i+1}| = |X_{i+1}|$, therefore we have equality, that is $\phi^{-1}(I_i) = X_{i+1}$

\

This together with \eqref{eq:lmonPWo} and connectedness of $\mathtt{R}$ yields $\mathtt{h}[\phi](y|_{I_i}) = x_{i+1}$, which implies $x_i \sim x_{i+1}$, a contradiction.
We conclude that $|X_{i+1} \cap \phi^{-1}(I_i)| = 0$, and so $\phi^{-1}(I_i) = X_i$.
Because $I_i$ and $I_{i+1}$ are disjoint, it follows that $\phi^{-1}(I_{i+1}) = X_{i+1}$.

\

If we write $y = (\leq_y, m)$ and $x_i \ast x_{i+1} = (\leq_x, n)$, then $X_i \lneq_x X_{i+1}$ from \cref{prop:linorderideals}.
Because $\mathtt{R}[\phi^{-1}](x_i \ast x_{i+1}) = y|_{I_i \cup I_{i+1}}$, this gives us $\phi(X_i) \lneq_y \phi(X_{i+1})$, as desired.
\end{proof}

\

\begin{obs}
Let $\mathtt{R}$ be a species with restrictions, $y, x_1, \dots, x_n \in \mathcal G(\mathtt{R})$ and $\III$ a QSS of $y$ from $x_1\dots, x_n$.
Call $x \coloneqq x_1 \ast \dots \ast x_n$.
Then, $\mathcal I^{x, y}_{\III}$ has a unique maximal element, $\mathbb{1} = (1, \dots, 1)$.
\end{obs}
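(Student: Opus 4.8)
The plan is to reduce the statement to the single fact that $\mathbb{1} = (1,\dots,1)$ is the top element of the composition poset $\mathcal{C}_n$ and that it lies in $\mathcal{I}^{x,y}_{\III}$. Once both are established, $\mathbb{1}$ is automatically the maximum of the subset $\mathcal{I}^{x,y}_{\III} \subseteq \mathcal{C}_n$, hence its unique maximal element. So the work splits into two short verifications: that $\mathbb{1}$ dominates every composition, and that $\III$ is $\mathbb{1}$-stable.

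First I would recall, from the definition of $\mathbf{CS}$, that for $\alpha = \mathbb{1}$ the partial sums are $f_i = i$, so $\mathbf{CS}(\mathbb{1}) = \{1, \dots, n-1\} = [n-1]$, the top of the boolean lattice $2^{[n-1]}$. Since the order on $\mathcal{C}_n$ is the pullback of the inclusion order under the bijection $\mathbf{CS}$, every $\alpha \in \mathcal{C}_n$ satisfies $\mathbf{CS}(\alpha) \subseteq [n-1] = \mathbf{CS}(\mathbb{1})$, i.e. $\alpha \leq \mathbb{1}$. Thus $\mathbb{1}$ is the maximum of the whole poset, and in particular of any subset containing it.

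Second I would check that $\mathbb{1} \in \mathcal{I}^{x,y}_{\III}$, i.e. that $\III$ is $\mathbb{1}$-stable. With $\ell(\mathbb{1}) = n$ and $f_i = i$, the coarsenings collapse trivially: $I^{\mathbb{1}}_i = I_i$ and $x^{\mathbb{1}}_i = x_i$ for each $i$. Hence the first requirement of $\alpha$-stability, that $(I^{\mathbb{1}}_i)_i$ be a QSS of $y$ from $(x^{\mathbb{1}}_i)_i$, is exactly the standing hypothesis that $\III$ is a QSS of $y$ from $(x_i)_i$. For the second requirement, note that no pair $i < j$ can be merged by $\mathbb{1}$: the merging condition asks for some $k$ with $f_{k-1} < i < j \leq f_k$, but $f_{k-1} = k-1$ and $f_k = k$ leave no room for two distinct integers in the interval $(k-1,k]$. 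So the clause ``whenever $x_i \sim x_j$ and $i<j$ are merged by $\mathbb{1}$, then $I_i \lneq_y I_j$'' holds vacuously, and $\III$ is $\mathbb{1}$-stable.

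Combining the two verifications gives $\mathbb{1} \in \mathcal{I}^{x,y}_{\III}$ together with $\alpha \leq \mathbb{1}$ for all $\alpha \in \mathcal{I}^{x,y}_{\III}$, so $\mathbb{1}$ is the maximum and therefore the unique maximal element, as claimed. I do not expect a genuine obstacle here; the only care needed is in unwinding the definitions of the $\mathbf{CS}$-order and of $\alpha$-stability correctly, in particular confirming that the ``merged indices'' condition is vacuous for the finest composition so that $\mathbb{1}$-stability reduces to the hypothesis on $\III$.
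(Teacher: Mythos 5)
Your proof is correct: the paper states this as an observation without proof, and your argument is exactly the intended direct unwinding of the definitions — $\mathbf{CS}(\mathbb{1})=[n-1]$ makes $\mathbb{1}$ the top of $\mathcal{C}_n$, the coarsenings $I^{\mathbb{1}}_i=I_i$ and $x^{\mathbb{1}}_i=x_i$ reduce $\mathbb{1}$-stability to the hypothesis that $\III$ is a QSS, and the merged-indices clause is vacuous since no $k-1<i<j\leq k$ exists. Nothing is missing.
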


The following is the main theorem in this section

\begin{thm}\label{thm:general_antipode}
For a species with restrictions $\mathtt{R}$ that is a multiple of $\mathtt{L}$, and an element $x$, along with its factorization $x=x_1 \ast \dots \ast x_n$, we have the following antipode formula in $\mathcal A (\mathtt{R})$:

$$S(\pat_x) = \sum_y \pat_y \sum_{\substack{\III \text{ QSS of $y$}\\ \text{from }x_1, \dots , x_n}}  \sum_{\alpha \in \mathcal I^{x, y}_{\III}} (-1)^{\ell ( \alpha)} \, .$$
\end{thm}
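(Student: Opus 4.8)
The plan is to run Takeuchi's formula from \cref{lm:takeuchi} and track the combinatorics through the iterated coproduct and the iterated product, so that everything collapses into a signed count of compositions attached to each QSS. First I would exploit that $\mathtt{R}$, being a multiple of $\mathtt{L}$, is an NCF species, so the factorization $x = x_1 \ast \dots \ast x_n$ is unique and ordered. By the coproduct formula \eqref{eq:coprodformula} together with \cref{prop:linorderideals} (the carrier of a left factor is an order ideal, hence every two-block splitting is a prefix/suffix grouping of $x_1,\dots,x_n$), the iterated coproduct $\Delta^{\circ(k-1)}\pat_x$ is a sum over ways of grouping $(x_1,\dots,x_n)$ into $k$ consecutive, possibly empty, blocks, i.e. over weak compositions of $n$ with $k$ parts. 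Applying $(\id - \iota\circ\epsilon)^{\otimes k}$ annihilates the empty blocks (which are the unit), leaving honest compositions $\alpha \models n$ with $\ell(\alpha)=k$. Writing $x^\alpha_i$ for the block products, Takeuchi's formula then reads
\[
S(\pat_x) = \sum_{\alpha \models n} (-1)^{\ell(\alpha)}\, \pat_{x^\alpha_1}\cdots \pat_{x^\alpha_{\ell(\alpha)}}.
\]

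Next I would expand the products using \eqref{eq:prodrule} iteratively, obtaining $\pat_{x^\alpha_1}\cdots\pat_{x^\alpha_{\ell(\alpha)}} = \sum_y \binom{y}{x^\alpha_1,\dots,x^\alpha_{\ell(\alpha)}}\pat_y$, where the coefficient counts QSS of $y$ from the coarse list $x^\alpha_1,\dots,x^\alpha_{\ell(\alpha)}$; call these \emph{block-QSS}. Collecting the coefficient of each $\pat_y$, the entire theorem reduces to the single combinatorial identity
\[
\binom{y}{x^\alpha_1,\dots,x^\alpha_{\ell(\alpha)}} = \#\bigl\{\III \text{ QSS of $y$ from } x_1,\dots,x_n \ :\ \III \text{ is } \alpha\text{-stable}\bigr\},
\]
valid for every $y$ and every $\alpha \models n$. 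Granting this, I would swap the order of summation: the double sum over $\alpha$ and over $\alpha$-stable $\III$ is a sum over pairs $(\III,\alpha)$ with $\III$ being $\alpha$-stable, which regrouped first by $\III$ produces exactly $\sum_{\III}\sum_{\alpha \in \mathcal I^{x,y}_{\III}}(-1)^{\ell(\alpha)}$, giving the stated formula.

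The core of the argument, and the main obstacle, is that identity, which I would prove by exhibiting mutually inverse maps between block-QSS and $\alpha$-stable fine QSS. For refinement, given a block-QSS $(J_1,\dots,J_{\ell(\alpha)})$, I factor each $y|_{J_m} \sim x^\alpha_m$ into indecomposables; by NCF and \cref{prop:linorderideals} this factorization is unique and its parts are totally ordered as ideals, yielding disjoint sets $I_{f_{m-1}+1}\lneq_y \dots \lneq_y I_{f_m}$ with $y|_{I_c}\sim x_c$. The resulting fine QSS is fully ordered within each block, hence $\alpha$-stable. For coarsening, given an $\alpha$-stable fine QSS I take unions $J_m = I^\alpha_m$; the first clause of $\alpha$-stability guarantees $y|_{J_m}\sim x^\alpha_m$, so this is a block-QSS. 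That coarsening followed by refinement is the identity is immediate, since the union recovers $J_m$.

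The hard direction is that refinement followed by coarsening is the identity, i.e. that an $\alpha$-stable fine QSS already has its within-block pieces totally ordered as ideals and therefore equals the canonical factorization. A cardinality count first forces the within-block pieces to be disjoint and to partition $J_m$. The genuine subtlety is illustrated by ``crossing'' covers that carry the correct individual pattern types yet are not ideal-ordered: these must be excluded, and it is precisely the ordering clause of $\alpha$-stability that does so. For a pair of \emph{isomorphic} consecutive pieces the required ordering $I_c \lneq_y I_{c+1}$ is built into the definition of $\alpha$-stability; for \emph{non-isomorphic} consecutive pieces it is not imposed but is forced, and this is exactly the content of \cref{obs:pwords-characterisation}, which promotes $\mu_c$-stability to $I_c \lneq_y I_{c+1}$. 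The remaining work, which I expect to be the technical heart, is to promote the whole-block condition $y|_{J_m}\sim x^\alpha_m$ to the adjacent-pair ($\mu_c$-stability) conditions needed to invoke \cref{obs:pwords-characterisation}; I would carry this out by induction on the block size, peeling off the bottom ideal piece at each step and using the connectedness of $\mathtt{R}$ to identify it with $x_{f_{m-1}+1}$.
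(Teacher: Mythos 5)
Your proposal follows the same route as the paper's proof: Takeuchi's formula, the NCF property to index the iterated coproduct by (weak) compositions of $n$, expansion of the resulting products into QSS counts, and the mutually inverse maps $\Phi_\alpha$, $\Psi_\alpha$ between QSS of $y$ from the coarse factors $x^{\alpha}_1,\dots,x^{\alpha}_{\ell(\alpha)}$ and $\alpha$-stable QSS of $y$ from $x_1,\dots,x_n$. The only difference is one of emphasis: the step you isolate as the technical heart --- that an $\alpha$-stable QSS is automatically ideal-ordered within each block, so that refinement after coarsening is the identity --- is precisely the step the paper dispatches with ``easily seen to be inverses of each other,'' and your plan for it (cardinality forces disjointness, indecomposability confines each piece to a single canonical block, and the ordering clause of $\alpha$-stability rules out permuting isomorphic pieces) is the right one.
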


\begin{proof}
\begin{align*}
\Delta^{\circ (k-1)} (\pat_x)
    =& \sum_{x = \chi_1 \ast \dots \ast \chi_k} \pat_{\chi_1}\otimes \dots \otimes \pat_{\chi_k}
\end{align*}
Because the species with restrictions $\mathtt{R}$ has NCF, the only ways to factorize $x$ into $k$ factors is to start from the original factorization $x = x_1 \ast \dots \ast x_n$ and bracket these factors into $k$ blocks, possibly empty, of $n$.
Therefore, we can enumerate these factorizations using weak compositions, as follows:

\begin{align*}
(\iota \circ \epsilon -  \id_{\mathcal A (\mathtt{R})})^{\star k} (\pat_x)
    =& (-1)^k  \mu^{\circ(k-1)}( \id_{\mathcal A (\mathtt{R})} - \iota \circ \epsilon)^{\otimes k} \Delta^{\circ(k-1)} (\pat_x)\\
    =& (-1)^k  \mu^{\circ(k-1)}( \id_{\mathcal A (\mathtt{R})} - \iota \circ \epsilon)^{\otimes k}\left(\sum_{\substack{\alpha\models^0 n \\ \ell(\alpha) = k}} \pat_{x^{\alpha}_1} \otimes \dots \otimes \pat_{x^{\alpha}_k} \right) \\
    =&  (-1)^k  \mu^{\circ(k-1)} \sum_{\substack{\alpha\models n \\ \ell(\alpha) = k}}  \pat_{x^{\alpha}_1} \otimes \dots \otimes \pat_{x^{\alpha}_k}\\
    =&  (-1)^k \sum_{\substack{\alpha\models n \\ \ell(\alpha) = k}}  \pat_{x^{\alpha}_1}  \cdots \pat_{x^{\alpha}_k}
\end{align*}

Note how we used that $(\id_{\mathcal A (\mathtt{R})} - \iota \circ \epsilon ) (\pat_x) = \mathbb{1}[x \neq 1]\pat_x$.
Takeuchi's formula gives:

\begin{align*}
S(\pat_x)&= \sum_{k\geq 0 } (\iota \circ \epsilon -  \id_{\mathcal A (\mathtt{R})})^{\star k} (\pat_x) \\
         &= \sum_{\alpha \models n} (-1)^{\ell(\alpha)} \pat_{x^{\alpha}_1} \cdots \pat_{x^{\alpha}_{\ell(\alpha)}}\\
         &= \sum_{\alpha \models n } (-1)^{\ell(\alpha)} \sum_y \pat_y \binom{y}{x^{\alpha}_1,  \dots , x^{\alpha}_{\ell(\alpha)}}\\
         &= \sum_y \pat_y \sum_{\alpha \models n} \sum_{  \substack{\III \text{ QSS of $y$}\\ \text{from }x^{\alpha}_1,  \dots , x^{\alpha}_{\ell(\alpha)} }} (-1)^{\ell(\alpha)}\\
         &= \sum_y \pat_y \sum_{  \substack{\III \text{ QSS of $y$}\\ \text{from }x_1 , \dots ,  x_n }} \sum_{\alpha \in \mathcal I^{x, y}_{\III}} (-1)^{\ell(\alpha)}.
\end{align*}

All equalities but the last one are simple rearrangements.
To motivate the last equality, we will construct a series of bijections, for each composition $\alpha \models n$:
\begin{align*}
    \Phi_{\alpha}^{x, y} = \Phi_{\alpha} : \left\{ \JJJ \text{ QSS of $y$ from $x^{\alpha}_1, \dots, x^{\alpha}_{\ell(\alpha)}$}\right\} &\to \left\{ \III \text{ $\alpha$-stable QSS of $y$ from $x_1, \dots, x_n$}\right\}\, , \\
    (J_1, \dots, J_{\ell(\alpha)} ) &\mapsto (I_1, \dots , I_n).
\end{align*}

\

Recall that $y = (\leq_y, \iota)$ induces a linear order $m$ in $I$.
The sets $(I_i)_i$ are defined so that $J_s = I_{f_{s-1}+1} \uplus \dots \uplus I_{f_s}$, that $|I_i | = |x_i|$ and that $I_i \lneq_y I_j $ whenever $f_{s-1} < i < j \leq f_{s}$.
There is clearly a unique way to choose such $(I_i)_i$.
This is easily seen to be an $\alpha$-stable QSS of $y$ from $x_1, \dots, x_n$.

\

Inversely, we define:
\begin{align*}
    \Psi_{\alpha}^{x, y} = \Psi_{\alpha}  : \left\{ \III \text{ $\alpha$-stable QSS of $y$ from $x_1, \dots, x_n$}\right\} &\to \left\{ \JJJ \text{ QSS of $y$ from $x^{\alpha}_1, \dots, x^{\alpha}_{\ell(\alpha)}$}\right\} \, , \\
    (I_1, \dots , I_n) &\mapsto (I_1^{\alpha}, \dots, I^{\alpha}_{\ell(\alpha)} ),
\end{align*}
where we recall that $I^{\alpha}_i = I_{f_{s-1}+1} \cup \dots  \cup I_{f_s}$.

\

The maps $\Psi_{\alpha}$ and $\Phi_{\alpha}$ are easily seen to be inverses of each other, establishing the bijection and the desired result.
\end{proof}

\

%

\begin{prop}[Filtered structure of $\mathcal I$]\label{prop:filter_structure_I}
For an ordered species with restrictions $\mathtt{R}$, and an elements $x\in  \mathtt{R}[J], y\in \mathtt{R}[I]$, along with a factorization into $\ast$-indecomposibles $x = x_1 \ast \dots \ast x_n$ and a QSS $\III$ of $y$ from $x_1, \dots, x_n$, we have that $\mathcal I^{ x, y}_{\III}$ is a filter.
That is, if $\alpha \in \mathcal I^{ x, y}_{\III}$ and $\beta \geq \alpha$ then $\beta \in \mathcal I^{ x, y}_{\III}$.

\
Furthermore, if $\mathcal I^{ x, y}_{\III}$ has a unique minimal element distinct from $\mathbb{1}$, then
$$\sum_{\alpha \in \mathcal I^{x, y}_{\III}} (-1)^{\ell(\alpha)} = 0 \, .$$
\end{prop}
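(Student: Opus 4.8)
The plan is to prove the two assertions separately, but both rest on understanding $\mathcal I^{x,y}_{\III}$ through the bijection $\mathbf{CS}$ between compositions $\alpha \models n$ and subsets of $[n-1]$. Recall that under $\mathbf{CS}$, the order $\alpha \leq \beta$ on compositions corresponds to subset inclusion $\mathbf{CS}(\alpha) \subseteq \mathbf{CS}(\beta)$, and merging consecutive entries of $\beta$ (to pass to a smaller $\alpha$) corresponds to deleting elements of $\mathbf{CS}(\beta)$. So "$\mathcal I^{x,y}_{\III}$ is a filter" translates to: the set of subsets $\mathbf{CS}(\alpha)$ for which $\III$ is $\alpha$-stable is an up-set in the boolean lattice $2^{[n-1]}$.

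For the filter claim, I would show directly that $\alpha$-stability is preserved when passing from $\alpha$ to any larger $\beta$. Geometrically, $\beta \geq \alpha$ means $\beta$ is a \emph{refinement} of $\alpha$: each block of $\alpha$ is a union of consecutive blocks of $\beta$, and in particular any pair $i < j$ merged by $\beta$ is also merged by $\alpha$. The definition of $\alpha$-stability has two requirements. The condition "$(I^{\alpha}_i)_i$ is a QSS of $y$ from $(x^{\alpha}_i)_i$" amounts to asking that $y|_{I^{\alpha}_i} = x^{\alpha}_i$ for each block; since $\beta$ refines $\alpha$, each $\beta$-block $I^{\beta}_i$ is contained in some $\alpha$-block, and I would argue that the $\alpha$-stability identity restricts cleanly to give the $\beta$-stability identity on each finer block — this is essentially an application of the coassociativity/restriction axioms for $\mathtt{R}$ together with the fact that $\ast$-factors behave well under restriction (via \cref{prop:linorderideals}). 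The second requirement, that $I_i \lneq_y I_j$ whenever $x_i \sim x_j$ and $i,j$ are merged by $\beta$, is immediate for the filter direction: if $i,j$ are merged by $\beta$ then they are merged by the coarser $\alpha$, so the ordering constraint was already imposed by $\alpha$-stability and continues to hold.

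For the vanishing of the signed sum when $\mathcal I^{x,y}_{\III}$ has a unique minimal element $\alpha_0 \neq \mathbb{1}$, the plan is to invoke the structure just established: $\mathcal I^{x,y}_{\III}$ is a filter with unique maximum $\mathbb{1}$ (this is the \cref{obs} preceding the theorem) and, by hypothesis, a unique minimum $\alpha_0$. Via $\mathbf{CS}$, the filter becomes the interval $[\mathbf{CS}(\alpha_0), \mathbf{CS}(\mathbb{1})] = [\mathbf{CS}(\alpha_0), [n-1]]$ in the boolean lattice, i.e.\ all subsets $T$ with $\mathbf{CS}(\alpha_0) \subseteq T \subseteq [n-1]$. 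Since $\ell(\alpha) = |\mathbf{CS}(\alpha)| + 1$, the sign $(-1)^{\ell(\alpha)}$ depends only on the parity of $|T|$, and the sum becomes $-\sum_{\mathbf{CS}(\alpha_0) \subseteq T \subseteq [n-1]} (-1)^{|T|}$. Because $\alpha_0 \neq \mathbb{1}$ forces $\mathbf{CS}(\alpha_0) \subsetneq [n-1]$, the index set is a nonempty boolean interval (a nonempty product of two-element chains), over which the alternating sum of $(-1)^{|T|}$ vanishes by the standard binomial identity $\sum_{k=0}^{d}\binom{d}{k}(-1)^k = 0$ for $d \geq 1$. The main obstacle is the first claim: carefully verifying that the QSS-identity "$(I^{\alpha}_i)_i$ is a QSS from $(x^{\alpha}_i)_i$" genuinely descends to all refinements, since this requires tracking how the $\ast$-decomposition of $x^{\alpha}_i$ interacts with the restriction of $y$ to the corresponding block, and ensuring the ordering conditions on equivalent factors are not silently lost when blocks are split.
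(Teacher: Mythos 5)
Your proposal matches the paper's proof in both structure and substance: the filter property is obtained exactly as in the paper (pairs merged by $\beta$ are merged by the coarser $\alpha$, so the ordering constraint is inherited, and the QSS identity descends to each finer block by restricting $y|_{I^{\alpha}_j}$ to $I^{\beta}_i \subseteq I^{\alpha}_j$), and the vanishing of the signed sum over the boolean interval $[\mathbf{CS}(\alpha_0),[n-1]]$ is the paper's argument as well, which phrases your binomial identity as the sign-reversing involution $A \mapsto A \,\Delta\, \{\chi\}$ for a fixed $\chi \notin \mathbf{CS}(\alpha_0)$. The subtlety you flag about the QSS identity descending to refinements is handled by the paper in a single displayed line, so your treatment is no less complete than the original.
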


\begin{proof}
Say that $y = (\leq_y, \iota)$.
To establish the first fact, assume that $\alpha \in \mathcal I^{x, y}_{\III}$ and let $\beta \geq \alpha $.
Because $\beta \geq \alpha$, if $\beta $ merges $i < j$ then $\alpha $ merges $i < j$, thus we have that $I_i \lneq_y I_j$.
It remains to show that $(I^{\beta}_i)_i$ is a QSS of $y$ from $(x^{\beta}_i )_i$, or equivalent that $y|_{I_i^{\beta}} \sim x_i^{\beta}$.

\

However, because $\beta \geq \alpha$, for each $i$ we have $I^{\beta}_i \subseteq I^{\alpha}_j$ so 
\[y|_{I^{\beta}_i} = {\Big(}y|_{I^{\alpha}_j}{\Big )}|_{I^{\beta}_i} =x^{\alpha}_j |_{I^{\beta}_i} = x^{\beta}_i \, .\]

\

For the concluding part, we simply observe that if $\mathcal I^{x, y}_{\III}$ has a unique minimal element $\mathbb{0}$, then it is the interval of a boolean poset $\mathcal I^{x, y}_{\III} = [\mathbb{0}, \mathbb{1}]$.
Let $K $ be the collection of sets $A \subseteq [n-1]$ that contain $X \coloneqq \mathbf{CS}(\mathbb{0})$.

\

Consider $\zeta: K \to K$.
Choose $\chi \in [n-1] \setminus X$, if $X \neq [n-1]$, and consider the following symmetric difference:
\[\zeta (A) = A \,\Delta\, \{ \chi \}\]

If no such $\chi $ exists, define $\zeta(A) = A$.
This is an involution on $K$, 
which corresponds to a sign-reversing involution in compositions in $[\mathbb{0} , \mathbb{1}]$:
$$\sum_{\alpha \in \mathcal I^{x, y}_{\III}} (-1)^{\ell(\alpha)} =  \mathbb{1}[[n-1] \setminus X \text{ is empty } ] = \mathbb{1}[\mathbb{1} = \mathbb{0}] \, .$$
This concludes the proof.
\end{proof}




\

The consequence is, for pattern algebras, that computing the antipode corresponds to find which $\mathcal I_{\III}^{x, y}$ have a unique minimal element.
In this case, the antipode formula reduces to counting how many of these minimal elements are the composition $\mathbb{1}$.
These will contribute with $(-1)^n$ to the total coefficient of $\pat_y$.

\

\section{The antipode formula on special cases\label{sec:formula_pp}}
\subsection{The antipode formula for the pattern algebra on packed words\label{sec:formula_packed}}
For a partial order $\leq$ on a set $I$, and two sets $A, B \subseteq I$, recall that we say that $A \lneq B$ if $A\cap B = \emptyset$ and $a \leq b$ for any $a \in A$ and $b \in B$.
We recall the definition of non-interlacing QSS on packed words.

\begin{defin}[Interlacing QSS on packed words]
Let $\rho, \omega_1, \dots, \omega_n$ be packed words, where $\rho = (\leq_P, \leq_V)$ is a packed word on $I$.
Let $\III = (I_1, \dots, I_n)$ be a QSS of $\rho$ from $\omega_1, \dots, \omega_n$.
We say that this QSS is \textbf{non-interlacing} if there exists some $i = 1, \dots, n-1$ such that $I_i \lneq_P I_{i+1}$ and $I_i \lneq_V I_{i+1}$.
If no such $i$ exists, we say that the QSS is \textbf{interlacing}.

\

Additionally, let $ \bigl[\!\begin{smallmatrix} \rho  \\ \omega_1, \dots, \omega_n \end{smallmatrix}\!\bigr]$ be the number of interlacing QSS of $\rho$ from $\omega_1, \dots, \omega_n$.
\end{defin}

\

Our goal in this section is to analyse $\mathcal I^{\omega, \rho}_{\III}$, show that it always has a unique minimal element, and that it is precisely $\mathbb{1}$ whenever $\III$ is interlacing.
Recall that $\mu_i = (\underbrace{1, \dots , 1}_\text{$i-1$ times}, 2, 1, \dots, 1)$.

\

\begin{lm}\label{lm:QSSpackedWords}
Let $\omega, \rho = (\leq_P, \leq_V)$ be packed words, such that $\omega = \omega_1 \oplus \dots \oplus \omega_n$ is its factorization into $\oplus$-indecomposibles.
Let $\III$ be a QSS of $\rho$ from $\omega_1, \dots , \omega_n$.
Then $\III$ is $\mu_i$-stable if and only if $I_i \lneq_P I_{i+1}$ and $I_i \lneq_V I_{i+1}$.
\end{lm}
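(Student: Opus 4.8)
The plan is to unwind the definition of $\mu_i$-stability and reduce the statement to a single equivalence about the restriction $\rho|_{I_i \cup I_{i+1}}$. Since $\mu_i = (1,\dots,1,2,1,\dots,1)$ has $\mathbf{CS}(\mu_i) = \{1,\dots,n-1\}\setminus\{i\}$, every block of the induced coarsening is a singleton except the $i$-th, which is $I_i \cup I_{i+1}$ with associated factor $x^{\mu_i}_i = \omega_i \oplus \omega_{i+1}$, and the only pair merged by $\mu_i$ is $(i,i+1)$. Hence $\mu_i$-stability of $\III$ is equivalent to the conjunction of: (a) $\rho|_{I_i \cup I_{i+1}} \sim \omega_i \oplus \omega_{i+1}$; and (b) if $\omega_i \sim \omega_{i+1}$ then $I_i \lneq_P I_{i+1}$ (recall that the linear order carried by the $\mathtt{L}$-factor of $\mathtt{PWo}$ is precisely $\leq_P$). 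I would then prove the two implications of the lemma separately.

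For the implication $(\Leftarrow)$, assume $I_i \lneq_P I_{i+1}$ and $I_i \lneq_V I_{i+1}$. The first separation forces $I_i \cap I_{i+1} = \emptyset$ and makes $I_i$ an initial segment of $\leq_P$ on $I_i \sqcup I_{i+1}$, while the second places every $\leq_V$-rank of $I_i$ strictly below every $\leq_V$-rank of $I_{i+1}$. By the very definition of the diagonal sum $\oplus$, these two facts say exactly that $\rho|_{I_i \cup I_{i+1}} = \rho|_{I_i} \oplus \rho|_{I_{i+1}}$, and since $\III$ is a QSS we have $\rho|_{I_i} \sim \omega_i$ and $\rho|_{I_{i+1}} \sim \omega_{i+1}$, which gives condition (a). Condition (b) is immediate, as its conclusion $I_i \lneq_P I_{i+1}$ is among the hypotheses. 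Hence $\III$ is $\mu_i$-stable.

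For the implication $(\Rightarrow)$, I would start from condition (a), $\rho|_{I_i \cup I_{i+1}} \sim \omega_i \oplus \omega_{i+1}$, and reuse the argument already carried out in the proof of \cref{obs:pwords-characterisation}. That argument fixes an isomorphism $\phi$ realizing $\rho|_{I_i \cup I_{i+1}} \sim \omega_i \oplus \omega_{i+1}$ and, using the $\oplus$-indecomposability of $\omega_i$ together with the order-ideal structure from \cref{prop:linorderideals}, shows that $\phi$ matches the blocks precisely, namely $\phi^{-1}(I_i) = \mathbb{X}(\omega_i)$ and $\phi^{-1}(I_{i+1}) = \mathbb{X}(\omega_{i+1})$. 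The point I would emphasize (and the only addition beyond the cited proof) is that $\phi$ is an isomorphism of packed words, so it transports \emph{both} orders; since $\mathbb{X}(\omega_i) \lneq_P \mathbb{X}(\omega_{i+1})$ and $\mathbb{X}(\omega_i) \lneq_V \mathbb{X}(\omega_{i+1})$ hold in $\omega_i \oplus \omega_{i+1}$ by definition of the diagonal sum, applying $\phi$ yields $I_i \lneq_P I_{i+1}$ and $I_i \lneq_V I_{i+1}$ simultaneously.

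The main obstacle is the block-matching step in $(\Rightarrow)$: a priori the isomorphism $\phi$ need not respect the partition $\{I_i, I_{i+1}\}$, and ruling out a mixed preimage is exactly where the $\oplus$-indecomposability of the factors and a counting argument on $|I_i|$ and $|I_{i+1}|$ enter. Since this is precisely the content of the proof of \cref{obs:pwords-characterisation}, which only records the weaker conclusion $I_i \lneq_P I_{i+1}$, I would either invoke that proof for the matching and then simply read off the additional $\leq_V$-separation, or reproduce its short case analysis on $|I_i| \geq |I_{i+1}|$ versus $|I_i| \leq |I_{i+1}|$ to keep the argument self-contained.
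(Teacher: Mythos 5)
Your proof is correct and takes essentially the same route as the paper's: the backward direction is identical (disjointness plus the two separations force $\rho|_{I_i \cup I_{i+1}} = \rho|_{I_i} \oplus \rho|_{I_{i+1}}$), and the forward direction likewise extracts the extra $\leq_V$-separation from the isomorphism $\rho|_{I_i\cup I_{i+1}} \sim \omega_i \oplus \omega_{i+1}$ via the block-matching underlying \cref{obs:pwords-characterisation}. The only cosmetic difference is that the paper invokes that lemma's statement and then recovers $I_i \lneq_V I_{i+1}$ by uniqueness of $\leq_P$-ideals of a given size, whereas you re-open its proof to transport both orders through $\phi$ at once; these amount to the same argument.
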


\begin{proof}
Let us take care of the \textbf{forward direction} first.
From \cref{obs:pwords-characterisation}, we have that $I_i \lneq_P I_{i+1}$, therefore $I_i$ and $I_{i+1}$ are disjoint and $I_i$ is the unique ideal of $(\leq_P)|_{I_i \cup I_{i+1}}$ of size $|I_i|$.
We have that $\rho|_{I_i \cup I_{i+1}} \sim \omega_i \oplus \omega_{i+1}$, so the unique ideal of $(\leq_V)|_{I_i \cup I_{i+1}}$ of size $|I_i|$ is also an ideal of $(\leq_P)|_{I_i \cup I_{i+1}}$, so it has to be $I_i$.
We conclude that $I_i$ is an ideal of $(\leq_V)|_{I_i \cup I_{i+1}}$, so $I_i \lneq I_{i+1}$, concluding this part of the proof.

\

For the \textbf{backwards direction}, if $I_i \lneq_P I_{i+1}$ then $I_i$ and $I_{i+1}$ are disjoint.
Therefore, by the definition of $\ast $ in partial orders, 
$$((\leq_P)|_{I_i \cup I_{i+1}}) = ((\leq_P)|_{I_i} \ast (\leq_P)|_{I_{i+1}})\, . $$
Similarly for $\leq_V$.
So we conclude that 
$$\rho|_{I_i \cup I_{i+1}} = ((\leq_P)|_{I_i} \ast (\leq_P)|_{I_{i+1}}, (\leq_V)|_{I_i} \ast (\leq_V)|_{I_{i+1}}) = \rho|_{I_i} \oplus \rho|_{I_{i+1}}\, .  $$
If $\omega_i \sim \omega_{i+1}$, we also have that $I_i \lneq_P I_{i+1}$, so $\III$ is $\mu_i$-stable.
\end{proof}

\

\begin{lm}\label{lm:minpacked}
Let $\omega$ be  packed word, along with $\omega = \omega_1 \oplus \dots \oplus \omega_n$, its unique factorization into $\oplus$-indecomposible packed words.
Let $\rho$ be another packed word, and $\III$ a QSS of $\rho$ from $\omega_1, \dots , \omega_n$. Then $\mathcal I^{\omega, \rho}_{\III}$ has a unique minimal element and, therefore, it is an interval in $\mathcal C_n$.
\end{lm}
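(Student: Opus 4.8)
\emph{Proof plan.} The plan is to identify $\mathcal I^{\omega,\rho}_{\III}$ explicitly as a principal filter inside the Boolean lattice $\mathcal C_n \cong 2^{[n-1]}$, via the bijection $\mathbf{CS}$. Under this bijection the order on compositions is the subset order, so $\mathbb{1}$ corresponds to $[n-1]$, and by \cref{prop:filter_structure_I} the set $\mathcal I^{\omega,\rho}_{\III}$ is upward closed. Since a finite up-set in a Boolean lattice possesses a unique minimal element precisely when it is a principal filter, it suffices to exhibit a single composition $\mathbb{0}$ that lies below every element of $\mathcal I^{\omega,\rho}_{\III}$ and itself belongs to $\mathcal I^{\omega,\rho}_{\III}$. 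I would set
\[ T \coloneqq \{ i \in [n-1] \mid I_i \lneq_P I_{i+1} \text{ and } I_i \lneq_V I_{i+1} \}, \]
and let $\mathbb{0}$ be the composition determined by $\mathbf{CS}(\mathbb{0}) = [n-1] \setminus T$. By \cref{lm:QSSpackedWords}, the condition $i \in T$ is exactly $\mu_i$-stability of $\III$.

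First I would settle the easy containment $\mathcal I^{\omega,\rho}_{\III} \subseteq [\mathbb{0}, \mathbb{1}]$. Take $\alpha \in \mathcal I^{\omega,\rho}_{\III}$ and any index $i \notin \mathbf{CS}(\alpha)$. Then $\mathbf{CS}(\mu_i) = [n-1]\setminus\{i\} \supseteq \mathbf{CS}(\alpha)$, so $\mu_i \geq \alpha$; the filter property of \cref{prop:filter_structure_I} gives $\mu_i \in \mathcal I^{\omega,\rho}_{\III}$, that is $\III$ is $\mu_i$-stable, whence $i \in T$ by \cref{lm:QSSpackedWords}. Thus $[n-1]\setminus \mathbf{CS}(\alpha) \subseteq T$, i.e. $\mathbf{CS}(\alpha) \supseteq \mathbf{CS}(\mathbb{0})$ and $\alpha \geq \mathbb{0}$. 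This shows $\mathbb{0}$ is a lower bound for $\mathcal I^{\omega,\rho}_{\III}$.

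The remaining, and main, task is to verify $\mathbb{0} \in \mathcal I^{\omega,\rho}_{\III}$, after which the filter property yields $[\mathbb{0}, \mathbb{1}] \subseteq \mathcal I^{\omega,\rho}_{\III}$ and the proof concludes. By construction every index merged by $\mathbb{0}$ lies in $T$, so inside each block $\{f_{s-1}+1, \dots, f_s\}$ the sets $I_{f_{s-1}+1}, \dots, I_{f_s}$ satisfy $I_k \lneq_P I_{k+1}$ and $I_k \lneq_V I_{k+1}$ consecutively. I would first check the stability requirement on $\lneq$: for merged $i < j$ the chain $I_i \lneq_P I_{i+1} \lneq_P \dots \lneq_P I_j$ collapses to $I_i \lneq_P I_j$ by transitivity of $\lneq_P$, the blocks being nonempty since the $\omega_k$ are $\oplus$-indecomposable. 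Then I would verify $\rho|_{I^{\mathbb{0}}_s} \sim \omega^{\mathbb{0}}_s$: as the $I_k$ in a block are pairwise disjoint and consecutively $\lneq_P$ and $\lneq_V$, both $(\leq_P)|_{I^{\mathbb{0}}_s}$ and $(\leq_V)|_{I^{\mathbb{0}}_s}$ are the $\ast$-concatenations of their restrictions to the individual $I_k$, giving $\rho|_{I^{\mathbb{0}}_s} = \rho|_{I_{f_{s-1}+1}} \oplus \dots \oplus \rho|_{I_{f_s}} \sim \omega^{\mathbb{0}}_s$. This last step is the crux; it is the iterated form of the backward direction of \cref{lm:QSSpackedWords}, and the point needing care is precisely that the consecutive relations $I_k \lneq_V I_{k+1}$ for the partial order $\leq_V$ chain up into a genuine diagonal-sum structure, which is where disjointness and indecomposability are used. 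Having established $\mathbb{0} \in \mathcal I^{\omega,\rho}_{\III}$, I conclude $\mathcal I^{\omega,\rho}_{\III} = [\mathbb{0}, \mathbb{1}]$, an interval in $\mathcal C_n$ with unique minimal element $\mathbb{0}$.
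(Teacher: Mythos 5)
The proposal is correct and takes essentially the same route as the paper: both identify the candidate minimum as the composition whose cut set is the complement of $T=\{i : \III \text{ is } \mu_i\text{-stable}\}$ (via \cref{lm:QSSpackedWords}) and verify its membership in $\mathcal I^{\omega,\rho}_{\III}$ by chaining the consecutive $\lneq_P$ and $\lneq_V$ relations within each block. Your lower-bound step --- deducing $\mu_i\in\mathcal I^{\omega,\rho}_{\III}$ from the filter property of \cref{prop:filter_structure_I} and then applying \cref{lm:QSSpackedWords} --- is a slightly cleaner variant of the paper's direct contradiction, and your indexing $\mathbf{CS}(\mathbb{0})=[n-1]\setminus T$ is the consistent one: the paper's ``$\beta=\mathbf{CS}^{-1}(J)$'' reads as a typo for $\mathbf{CS}^{-1}(J^c)$, since the rest of its argument (both the membership check and the minimality check) uses the complement.
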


\begin{proof}
We give a concrete description of $\mathcal I^{\omega, \rho}_{\III}$.
Consider the following set:
$$J = \{i \in [n-1] | I_i \lneq_P I_{i+1} \text{ and } I_i \lneq_V I_{i+1} \} = \{i \in [n-1] | \, \III \text{ is $\mu_i$-stable } \} \, ,$$
where the second equality is due to \cref{lm:QSSpackedWords}.
Let $\beta = \mathbf{CS}^{-1}( J)$. 
We claim that $\beta \in \mathcal I^{\omega, \rho}_{\III}$ and that this is its smallest element.

\
\begin{itemize}
\item {\bf That $\beta$ is in $\mathcal I^{\omega, \rho}_{\III}$} we prove now.\\
Indeed, we just need to establish that for each $I^{\beta}_i$ we have that $\rho|_{I^{\beta}_i} = x^{\beta}_i$.
Because $I^{\beta}_i = I_{f_{i-1} + 1} \cup \dots \cup I_{f_i}$, and $I_{f_{i-1} + 1} \lneq_P \dots \lneq_P I_{f_i}$, $I_{f_{i-1} + 1} \lneq_V \dots \lneq_V I_{f_i}$, we get that 
$$\rho|_{I^{\beta}_i} = \rho|_{I_{f_{i-1} + 1}} \oplus \dots \oplus \rho|_{I_{f_i}} = x_{f_{i-1} + 1} \oplus \dots\oplus x_{f_i} = x^{\beta}_i\,  ,$$

\

    \item {\bf That $\beta$ is the smallest element in $\mathcal I^{\omega, \rho}_{\III}$} we prove now. \\
Let $\alpha $ be a composition of $n$ such that $\alpha \not \geq \beta$.
Assume that $\alpha \neq \beta$. 
Then $J^c \cap \mathbf{CS}(\alpha)\neq \emptyset $, so pick some $i \in  J^c \cap \mathbf{CS}(\alpha) $.
Because $i \not\in J$, is such that $ I_i \not\lneq_P I_{i+1} \text{ or } I_i \not\lneq_V I_{i+1} $.

\

If $\omega_i \not\sim \omega_{i+1}$, one can see that $\rho|_{I_i \cup I_{i+1}} \neq \omega_i \oplus \omega_{i+1}$.
If $\omega_i \sim \omega_{i+1}$, stability would require $I_i \lneq_P I_{i+1}$, so $I_i \not\lneq_V I_{i+1}$, and we conclude again that $\rho|_{I_i \cup I_{i+1}} \neq \omega_i \oplus \omega_{i+1}$.

\

So we can never have $\rho|_{I^{\alpha}_j} = \omega^{\alpha}_j$ for $j$ such that $\omega^{\alpha}_j$ includes both $\omega_i \oplus \omega_{i+1}$ in its factorization.
If $\omega_i \sim \omega_{i+1}$, stability would require $I_i \lneq_P I_{i+1}$, so $I_i \not\lneq_V I_{i+1}$, and we conclude again that $\rho|_{I_i \cup I_{i+1}} \neq \omega_i \oplus \omega_{i+1}$.
This is the contradiction that we are aiming for.
\end{itemize}
With the construction of the minimal element, we have that $\mathcal I^{\omega, \rho}_{\III}$.
\end{proof}

\

\begin{thm}\label{thm:antipode_packed}
Let $\omega $ be a packed word, and $\omega = \omega_1 \oplus \dots \oplus \omega_n$ be its decomposition into $\oplus$-indecomposible packed words.
Then, on the pattern Hopf algebra of packed words, we have the following cancellation free and grouping free formula:
$$S(\pat_{\omega}) = (-1)^n  \sum_{\rho} \bigl[\!\begin{smallmatrix} \rho  \\ \omega_1, \dots, \omega_n \end{smallmatrix}\!\bigr] \pat_{\rho}  \, .$$
\end{thm}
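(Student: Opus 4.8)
The plan is to specialize the general antipode formula of \cref{thm:general_antipode} to the species $\mathtt{PWo}$ of packed words, which is of the form $\mathtt{L}\times\mathtt{R}$ and hence an ordered species with restrictions, and then to collapse each innermost signed sum using the poset structure established in \cref{lm:minpacked} and \cref{prop:filter_structure_I}. Writing $\omega = \omega_1 \oplus \dots \oplus \omega_n$, \cref{thm:general_antipode} gives
\[
S(\pat_\omega) = \sum_\rho \pat_\rho \sum_{\substack{\III \text{ QSS of } \rho \\ \text{from } \omega_1, \dots, \omega_n}} \sum_{\alpha \in \mathcal I^{\omega, \rho}_{\III}} (-1)^{\ell(\alpha)},
\]
so the entire argument reduces to evaluating the quantity $\sum_{\alpha \in \mathcal I^{\omega, \rho}_{\III}} (-1)^{\ell(\alpha)}$ for a fixed QSS $\III$ of $\rho$ from $\omega_1, \dots, \omega_n$.

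First I would invoke \cref{lm:minpacked}, which guarantees that $\mathcal I^{\omega, \rho}_{\III}$ always has a unique minimal element $\mathbb{0}$, so that it is an interval $[\mathbb{0}, \mathbb{1}]$ in $\mathcal C_n$. By the concluding part of \cref{prop:filter_structure_I}, the inner sum vanishes whenever this minimal element is distinct from $\mathbb{1}$; and when $\mathbb{0} = \mathbb{1}$ the interval collapses to the single composition $\mathbb{1} = (1, \dots, 1)$, whose length is $n$, so the sum equals $(-1)^{\ell(\mathbb{1})} = (-1)^n$. Hence
\[
\sum_{\alpha \in \mathcal I^{\omega, \rho}_{\III}} (-1)^{\ell(\alpha)} = (-1)^n \, \mathbb{1}[\mathbb{0} = \mathbb{1}].
\]

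The key step is to translate the algebraic condition $\mathbb{0} = \mathbb{1}$ into the interlacing condition on $\III$. Since $\mathcal I^{\omega,\rho}_{\III}$ is an upward-closed interval with top $\mathbb{1}$ inside the Boolean lattice $\mathcal C_n$, one has $\mathbb{0} = \mathbb{1}$ if and only if $\mathbb{0}$ lies below no coatom of $\mathbb{1}$, that is, if and only if none of the compositions $\mu_i$ (the ones covered by $\mathbb{1}$, obtained by merging positions $i$ and $i+1$) lie in $\mathcal I^{\omega,\rho}_{\III}$; equivalently, if and only if $\III$ fails to be $\mu_i$-stable for every $i = 1, \dots, n-1$. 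By \cref{lm:QSSpackedWords}, $\III$ is $\mu_i$-stable precisely when $I_i \lneq_P I_{i+1}$ and $I_i \lneq_V I_{i+1}$, which is exactly the condition defining a non-interlacing index $i$. Thus $\mathbb{0} = \mathbb{1}$ holds if and only if no such index exists, that is, if and only if $\III$ is interlacing.

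Substituting back, the inner sum equals $(-1)^n$ for each interlacing QSS and $0$ otherwise, so summing over all QSS of $\rho$ from $\omega_1, \dots, \omega_n$ produces $(-1)^n \bigl[\!\begin{smallmatrix} \rho \\ \omega_1, \dots, \omega_n \end{smallmatrix}\!\bigr]$, and the claimed formula follows upon pulling the global sign $(-1)^n$ out of the sum over $\rho$. The main obstacle is not any single computation, since the heavy lifting has been delegated to the lemmas, but rather the careful bookkeeping at two points: first, that the surviving term carries the sign $(-1)^n$ (because the unique surviving composition has length $n$) rather than $+1$; and second, that the coatom characterization of $\mathbb{0} = \mathbb{1}$ matches the interlacing definition verbatim. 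Both are resolved cleanly by \cref{lm:QSSpackedWords} together with the fact that $\mathtt{PWo}$ is ordered, which is what makes the factorization into $\oplus$-indecomposables unique and the poset $\mathcal I^{\omega,\rho}_{\III}$ a Boolean interval.
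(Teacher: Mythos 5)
Your proposal is correct and follows essentially the same route as the paper: specialize \cref{thm:general_antipode} to packed words, use \cref{lm:minpacked} to see that $\mathcal I^{\omega,\rho}_{\III}$ is an interval, kill the non-trivial intervals via \cref{prop:filter_structure_I}, and identify the surviving case $\mathbb{0}=\mathbb{1}$ with the interlacing condition through the $\mu_i$-stability criterion of \cref{lm:QSSpackedWords}. Your coatom argument just makes explicit what the paper leaves implicit in the construction of the minimal element $\beta=\mathbf{CS}^{-1}(J)$ in \cref{lm:minpacked}.
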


\begin{proof}
From \cref{thm:general_antipode}, we only need to establish that

\begin{equation}\label{eq:packed_alternating_sum}
\sum_{\alpha\in \mathcal I^{\rho, \omega}_{\III}} (-1)^{\ell (\alpha)} = (-1)^n \mathbb{1}[\III \text{ is interlacing QSS of $\rho$ from } \omega_1, \dots, \omega_n]\, .      
\end{equation}

\

Further, from \cref{prop:filter_structure_I} we know that the sum $\sum_{\alpha\in \mathcal I^{\rho, \omega}_{\III}}  (-1)^{\ell (\alpha)} $ vanishes whenever $\mathcal I^{\rho, \omega}_{\III}$ is an interval with more than one element.
From \cref{lm:minpacked}, we know that $\mathcal I^{\rho, \omega}_{\III}$ is indeed an interval.
The minimal interval is $\mathbb{1}$ if and only if $\III$ is an interlacing QSS from \cref{lm:minpacked}.
This concludes the proof.
\end{proof}

\



\

Notice that this proof hides a sign-reversing involution in it.
Specifically, it was used in establishing \cref{prop:filter_structure_I}.

\

\subsection{The antipode formula for the pattern algebra on permutations\label{sec:formula_permutation}}
We start by recalling the definition of interlaced QSS on permutations.

\begin{defin}[Interlacing QSS on permutations]
Let $\sigma, \pi_1, \dots, \pi_n$ be permutations, where $\sigma = (\leq_P, \leq_V)$ is a permutation on $I$.
Let $\III = (I_1, \dots, I_n)$ be a QSS of $\sigma$ from $\pi_1, \dots, \pi_n$.
We say that $\III$ is \textbf{non-interlacing} if there exists $i = 1, \dots, n-1$ such that $I_i \lneq_P I_{i+1}$ and $I_i \lneq_V I_{i+1}$.
If no such $i$ exists, we say that the QSS is \textbf{interlacing}.

\

Additionally, let 
$ \bigl[\!\begin{smallmatrix} \sigma \\ \pi_1, \dots, \pi_n \end{smallmatrix}\!\bigr]$ be the number of interlacing QSS of $\sigma$ from $\pi_1, \dots, \pi_n$
\end{defin}

\

\begin{thm}\label{thm:antipode_perms}
Let $\pi $ be a permutation, and $\pi = \pi_1 \oplus \dots \oplus \pi_n$ be its decomposition into irreducible permutations.
Then, on the pattern Hopf algebra of permutations, we have the following cancellation free and grouping free formula:
$$S(\pat_{\pi}) = (-1)^n \sum_{\sigma} \bigl[\!\begin{smallmatrix} \sigma \\ \pi_1, \dots, \pi_n \end{smallmatrix}\!\bigr] \pat_{\sigma} \, .$$
\end{thm}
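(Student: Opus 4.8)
The plan is to deduce this permutation formula from the already-established packed word formula of \cref{thm:antipode_packed}, using the functoriality of the pattern algebra construction. The key observation is that every permutation is a packed word whose $\leq_V$ order happens to be total, so the inclusion $j : \mathtt{Per} \hookrightarrow \mathtt{PWo}$ is a well-defined map on objects. First I would check that $j$ is a morphism of associative species with restrictions between connected species: it is natural with respect to bijections, it commutes with the restriction maps (restricting a permutation as a packed word returns the same permutation), and it intertwines the two diagonal sums $\oplus$; connectedness of both species is immediate. By \cref{thm:functoriality}, $j$ then induces a Hopf algebra morphism $\mathcal A(j) : \mathcal A(\mathtt{PWo}) \to \mathcal A(\mathtt{Per})$, which in particular commutes with the antipodes.

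The second step is to identify $\mathcal A(j)$ explicitly on the pattern basis. Since the pattern algebras are realised as function spaces on $\mathcal G(\mathtt{PWo})$ and $\mathcal G(\mathtt{Per})$, the map $\mathcal A(j)$ is the restriction of a function to the subset of permutation classes. Concretely, for a packed word $a$ and a permutation $b$ one has $\mathcal A(j)(\pat_a)(b)=\pat_a(j(b)) = \binom{b}{a}$, and since a permutation restricts only to permutations this vanishes unless $a$ is itself a permutation, in which case it equals $\pat_a(b)$. Hence $\mathcal A(j)$ sends $\pat_a \mapsto \pat_a$ when $a$ is a permutation and $\pat_a \mapsto 0$ otherwise.

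With these two ingredients the conclusion is immediate. A permutation $\pi$ has the same factorization $\pi = \pi_1 \oplus \dots \oplus \pi_n$ into $\oplus$-indecomposables whether regarded as a permutation or as a packed word, because each factor is a restriction of $\pi$ and hence again a permutation. Applying $\mathcal A(j)$ to the packed word antipode formula of \cref{thm:antipode_packed} for $\pat_\pi$ and using that $\mathcal A(j)$ commutes with $S$, I obtain
\[
S(\pat_\pi) = (-1)^n \sum_{\rho} \bigl[\!\begin{smallmatrix} \rho \\ \pi_1, \dots, \pi_n \end{smallmatrix}\!\bigr] \, \mathcal A(j)(\pat_\rho) = (-1)^n \sum_{\sigma} \bigl[\!\begin{smallmatrix} \sigma \\ \pi_1, \dots, \pi_n \end{smallmatrix}\!\bigr] \pat_\sigma \, ,
\]
where the surviving sum runs only over permutations $\sigma$. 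It then remains to observe that the interlacing count is unchanged: a QSS of a permutation $\sigma$ from $\pi_1, \dots, \pi_n$ and its interlacing condition (phrased through $\lneq_P$ and $\lneq_V$) read identically in $\mathtt{PWo}$ and in $\mathtt{Per}$, since all orders involved are total.

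The main obstacle lies not in any hard computation but in verifying cleanly that $\mathcal A(j)$ acts as claimed on the pattern basis — in particular that it annihilates every non-permutation packed word — and in confirming that the two $\oplus$-factorizations and the two interlacing counts genuinely coincide. An alternative, self-contained route would be to repeat the arguments of \cref{lm:QSSpackedWords}, \cref{lm:minpacked} and \cref{thm:antipode_packed} verbatim in the permutation setting, since permutations satisfy every hypothesis used there; the functoriality argument simply avoids this repetition.
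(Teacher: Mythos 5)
Your proposal is correct and follows essentially the same route as the paper: the paper also deduces the permutation formula from \cref{thm:antipode_packed} by applying the Hopf algebra morphism $\mathcal A[\mathrm{inc}]:\mathcal A(\mathtt{PW})\to\mathcal A(\mathtt{Per})$ induced (via \cref{thm:functoriality}) by the inclusion of permutations into packed words, using that $\mathcal A[\mathrm{inc}](\pat_\omega)=\pat_\omega\,\mathbb{1}[\omega\text{ is a permutation}]$ and that the morphism commutes with the antipode. Your additional care in checking that the $\oplus$-factorizations and interlacing counts agree in the two species is a welcome (if routine) tightening of the same argument.
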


\

Although we can obtain the antipode formula by showing that a relevant poset of compositions is an interval (see \cref{lm:minpacked}), we present here a different proof.
Specifically, we will be leveraging a map $\mathcal A[\mathrm{inc}]: \mathcal A(\mathtt{PW}) \to \mathcal A(\mathtt{Per})$ and the previous result on packed words.

\

First, observe that any permutation is a packed word, because any pair of total orders $(\leq_P, \leq_V)$ is a packed word, that is, a pair of partial orders such that $\leq_P$ is a total order and $\leq_V$ is partial linear order.
This gives us an inclusion map $\mathrm{inc} : \mathtt{Per} \to \mathtt{PW}$ that preserves restrictions and the monoidal structure.
Therefore, this gives us a surjective Hopf algebra morphism $\mathcal A[\mathrm{inc}]: \mathcal A(\mathtt{PW}) \to \mathcal A(\mathtt{Per})$.

\begin{proof}
From \cref{thm:antipode_packed}, we know that for any permutation $\pi$ seen as a packed word we have that
$$S(\pat_{\pi}) = (-1)^n \sum_{\omega} \bigl[\!\begin{smallmatrix} \omega \\ \pi_1, \dots, \pi_n \end{smallmatrix}\!\bigr] \pat_{\omega}\, . $$

If $\omega$ is a packed word, we compute $\mathcal A[\,\mathrm{inc}] (\pat_{\omega}) = \pat_{\omega}\mathbb{1}[\omega \text{ is a permutation }]$.
Thus, applying $\mathcal A[\mathrm{inc}]$ to both sides of the equation above, we get the desired result.
Note that $\mathcal A[\mathrm{inc}]$ is a Hopf algebra morphism from \cref{thm:functoriality}, so it commutes with the antipode.
\end{proof}

\

\section*{Acknowledgments}

Both authors would like to thank Valentin F\'eray and Christophe Reutenauer, for helping this project to start.
The first author is also grateful to the SNF grant P2ZHP2 191301, while the second one is grateful to the Austrian Science Fund FWF, grant I
5788 PAGCAP.

\bibliographystyle{alpha}
\bibliography{Bibliography}

\newcommand{\etalchar}[1]{$^{#1}$}
\begin{thebibliography}{BDL{\etalchar{+}}23}

\bibitem[AA17]{aguiar2017hopf}
Marcelo Aguiar and Federico Ardila.
\newblock Hopf monoids and generalized permutahedra.
\newblock {\em arXiv preprint arXiv:1709.07504}, 2017.

\bibitem[ABF20]{albert2020two}
Michael Albert, Mathilde Bouvel, and Valentin F{\'e}ray.
\newblock Two first-order logics of permutations.
\newblock {\em Journal of Combinatorial Theory, Series A}, 171:105158, 2020.

\bibitem[AM10]{AM2010}
Marcelo Aguiar and Swapneel Mahajan.
\newblock {\em Monoidal functors, species and {H}opf algebras}, volume~29 of
  {\em CRM Monograph Series}.
\newblock American Mathematical Society, Providence, RI, 2010.
\newblock With forewords by Kenneth Brown and Stephen Chase and Andr\'{e}
  Joyal.

\bibitem[AM13]{AM2013}
Marcelo Aguiar and Swapneel Mahajan.
\newblock Hopf monoids in the category of species.
\newblock {\em Hopf algebras and tensor categories}, 585:17--124, 2013.

\bibitem[AP22]{adeniran2022pattern}
Ayomikun Adeniran and Lara Pudwell.
\newblock Pattern avoidance in parking functions.
\newblock {\em arXiv preprint arXiv:2209.04068}, 2022.

\bibitem[BB19]{BergeronBenedetti}
Carolina Benedetti and Nantel Bergeron.
\newblock The antipode of linearized hopf monoids.
\newblock {\em Algebraic Combinatorics}, 2(5):903--935, 2019.

\bibitem[BDL{\etalchar{+}}23]{BGLPV2021}
Nantel Bergeron, Rafael S~Gonz{\'a}lez D'Le{\'o}n, Shu~Xiao Li, CY~Amy Pang,
  and Yannic Vargas.
\newblock Hopf algebras of parking functions and decorated planar trees.
\newblock {\em Advances in Applied Mathematics}, 143:102436, 2023.

\bibitem[BS17]{BS2017}
Carolina Benedetti and Bruce~E. Sagan.
\newblock Antipodes and involutions.
\newblock {\em J. Combin. Theory Ser. A}, 148:275--315, 2017.

\bibitem[ED03]{elizalde2003simple}
Sergi Elizalde and Emeric Deutsch.
\newblock A simple and unusual bijection for dyck paths and its consequences.
\newblock {\em Annals of Combinatorics}, 7(3):281--297, 2003.

\bibitem[Foi22]{Foissy}
Lo{\"\i}c Foissy.
\newblock Bialgebras in cointeraction, the antipode and the eulerian
  idempotent.
\newblock {\em arXiv preprint arXiv:2201.11974}, 2022.

\bibitem[GR20]{GrinbergReiner}
Darij Grinberg and Victor Reiner.
\newblock {\em Hopf algebras in combinatorics}.
\newblock Mathematisches Forschungsinstitut Oberwolfach gGmbH, 2020.

\bibitem[HM12]{humpert2012incidence}
Brandon Humpert and Jeremy~L Martin.
\newblock The incidence hopf algebra of graphs.
\newblock {\em SIAM Journal on Discrete Mathematics}, 26(2):555--570, 2012.

\bibitem[Knu68]{Knuth}
Donald~E Knuth.
\newblock The art of computer programming, vol 1: Fundamental.
\newblock {\em Algorithms. Reading, MA: Addison-Wesley}, 1968.

\bibitem[KW66]{konheim1966occupancy}
Alan~G Konheim and Benjamin Weiss.
\newblock An occupancy discipline and applications.
\newblock {\em SIAM Journal on Applied Mathematics}, 14(6):1266--1274, 1966.

\bibitem[Loe11]{Loehr}
Nicholas Loehr.
\newblock {\em Bijective combinatorics}.
\newblock CRC Press, 2011.

\bibitem[LRV10]{linton2010permutation}
Steve Linton, Nik Ru{\v{s}}kuc, and Vincent Vatter.
\newblock {\em Permutation Patterns}, volume 376.
\newblock Cambridge University Press, 2010.

\bibitem[MR95]{MalvenutoReutenauer}
Claudia Malvenuto and Christophe Reutenauer.
\newblock Duality between quasi-symmetrical functions and the solomon descent
  algebra.
\newblock {\em Journal of Algebra}, 177(3):967--982, 1995.

\bibitem[Pen22]{Penaguiao2020}
Raul Penaguiao.
\newblock Pattern hopf algebras.
\newblock {\em Annals of Combinatorics}, pages 1--47, 2022.

\bibitem[PR04]{PR2004}
Fr{\'e}d{\'e}ric Patras and Christophe Reutenauer.
\newblock On descent algebras and twisted bialgebras.
\newblock {\em Moscow Mathematical Journal}, 4(1):199--216, 2004.

\bibitem[PS06]{PS2006}
Fr{\'e}d{\'e}ric Patras and Manfred Schocker.
\newblock Twisted descent algebras and the solomon--tits algebra.
\newblock {\em Advances in Mathematics}, 199(1):151--184, 2006.

\bibitem[PS08]{PS2008}
Fr{\'e}d{\'e}ric Patras and Manfred Schocker.
\newblock Trees, set compositions and the twisted descent algebra.
\newblock {\em Journal of Algebraic Combinatorics}, 28(1):3--23, 2008.

\bibitem[PV23]{penaguiao2023polynomial}
Raul Penaguiao and Yannic Vargas.
\newblock Polynomial invariants in permutation pattern algebras.
\newblock {\em to appear}, 2023.

\bibitem[QR18]{qiu2018patterns}
Dun Qiu and Jeffrey Remmel.
\newblock Patterns in words of ordered set partitions.
\newblock {\em arXiv preprint arXiv:1804.07087}, 2018.

\bibitem[Sch93]{Schmitt1993}
William~R. Schmitt.
\newblock Hopf algebras of combinatorial structures.
\newblock {\em Canad. J. Math.}, 45(2):412--428, 1993.

\bibitem[Sta75]{stanley1975combinatorial}
Richard~P Stanley.
\newblock Combinatorial reciprocity theorems.
\newblock In {\em Combinatorics}, pages 307--318. Springer, 1975.

\bibitem[Sto93]{Stover}
Christopher~R Stover.
\newblock The equivalence of certain categories of twisted lie and hopf
  algebras over a commutative ring.
\newblock {\em Journal of pure and applied Algebra}, 86(3):289--326, 1993.

\bibitem[Tak71]{Takeuchi1971}
Mitsuhiro Takeuchi.
\newblock Free {H}opf algebras generated by coalgebras.
\newblock {\em J. Math. Soc. Japan}, 23:561--582, 1971.

\bibitem[Var14]{Vargas}
Yannic Vargas.
\newblock Hopf algebra of permutation pattern functions.
\newblock {\em Discrete Mathematics \& Theoretical Computer Science}, 2014.

\bibitem[XY22]{xu2022cancellation}
Da~Xu and Houyi Yu.
\newblock On the cancellation-free antipode formula for the
  malvenuto-reutenauer hopf algebra.
\newblock {\em arXiv preprint arXiv:2208.06841}, 2022.

\bibitem[Zag09]{zagier2009one}
D~Zagier.
\newblock A one-sentence proof that every prime p= 1 (mod 4) is a sum of two
  squares.
\newblock {\em Biscuits of Number Theory}, 34:143, 2009.

\end{thebibliography}

\end{document}